\def\singlespace{\def\baselinestretch{1}\@normalsize}
\def\singlespace{\def\baselinestretch{1}\@normalsize}
\numberwithin{equation}{section}
\renewcommand{\hat}{\widehat}
\renewcommand{\hat}{\widehat}
\newcommand{\bfm}[1]{\ensuremath{\mathbf{#1}}}
   \def\bA{\bfm A}  
   \def\bB{\bfm B}  
   \def\bC{\bfm C}  \def\CC{\mathbb{C}}
\def\be{\bfm e}     \def\EE{\mathbb{E}}
\def\bg{\bfm g}     
\def\bh{\bfm h}     
   \def\bI{\bfm I}
   \def\bL{\bfm L}  
   \def\bM{\bfm M}
   \def\bP{\bfm P}  \def\PP{\mathbb{P}}
\def\br{\bfm r}   \def\bR{\bfm R}  \def\RR{\mathbb{R}}
   \def\bS{\bfm S}  
\def\bu{\bfm u}   \def\bU{\bfm U}  
\def\bv{\bfm v}   \def\bV{\bfm V}  
\def\bw{\bfm w}   \def\bW{\bfm W}  
\def\bx{\bfm x}     
\def\by{\bfm y}   \def\bY{\bfm Y}  
\def\bz{\bfm z}   \def\bZ{\bfm Z}
\def\calE{{\cal  E}}
\def\calM{{\cal  M}} 
\def\calN{{\cal  N}}
\newcommand{\bfsym}[1]{\ensuremath{\boldsymbol{#1}}}
 \def\bbeta{\bfsym \beta}
              \def\bGamma{\bfsym \Gamma}
 \def\beps{\bfsym \varepsilon}          \def\bepsilon{\bfsym \varepsilon}
\DeclareMathOperator{\rank}{rank}
\def\eps{\varepsilon}
\def\beps{\mbox{\boldmath$\eps$}}
\def\today{\ifcase\month\or
  January\or February\or March\or April\or May\or June\or
  July\or August\or September\or October\or November\or December\fi
  \space\number\day, \number\year}
\newdimen\biblioindent    \biblioindent=30pt
\def\eps{\varepsilon}
\newcommand{\beq}{\begin{equation}}
  \newcommand{\eeq}{\end{equation}}
\newcommand{\beqn}{\begin{eqnarray}}
  \newcommand{\eeqn}{\end{eqnarray}}
\newcommand{\beqnn}{\begin{eqnarray*}}
  \newcommand{\eeqnn}{\end{eqnarray*}}
\renewcommand{\baselinestretch}{1.66}
\newcounter{CondCounter}
\numberwithin{equation}{section}
\theoremstyle{plain}
\newtheorem{theorem}{Theorem}
\theoremstyle{definition}
\newtheorem{remark}{Remark}
\theoremstyle{definition}
\theoremstyle{definition}
\newtheorem{lemma}{Lemma}
\newtheorem{corollary}{Corollary}
\theoremstyle{definition}
\theoremstyle{definition}
\theoremstyle{definition}
\DeclarePairedDelimiter\ceil{\lceil}{\rceil}
\begin{document}
\title{The Sup-norm Perturbation of HOSVD and Low Rank Tensor Denoising}
\author{
Dong Xia\footnote{Most of the work was done when the first author was a Visiting Assistant Professor in Department of Statistics at University of Wisconsin-Madison and later a Post-doctoral Research Scientist in Department of Statistics at Columbia University.} and Fan Zhou\footnote{Fan Zhou is supported in part by NSF Grants DMS-1509739 and CCF-1523768.}\\ Hong Kong University of Science and Technology and Georgia Institute of Technology\\Email: madxia@ust.hk, fzhou40@math.gatech.edu}

\date{(\today)}
\maketitle

\begin{abstract}
The higher order singular value decomposition (HOSVD) of tensors is a generalization of matrix SVD. The perturbation analysis of HOSVD under random noise is more delicate than its matrix counterpart. Recently, polynomial time algorithms have been proposed where statistically optimal estimates of the singular subspaces and the low rank tensors 
are attainable in the Euclidean norm. In this article, we analyze the sup-norm perturbation bounds of HOSVD and introduce estimators of the singular subspaces with sharp deviation bounds in the sup-norm. We also investigate a low rank tensor denoising estimator and demonstrate its fast convergence rate with respect to the entry-wise errors. {The sup-norm perturbation bounds reveal unconventional phase transitions for statistical learning applications such as the exact clustering in high dimensional Gaussian mixture model and the exact support recovery in sub-tensor localizations.} 
 In addition, the bounds established for HOSVD also elaborate the one-sided sup-norm perturbation bounds for the singular subspaces of unbalanced (or fat) matrices.
\end{abstract}


\section{Introduction}\label{introsec}
A tensor is a multi-array of more than 2 dimensions, which can be viewed as a higher order generalization of matrices. Data of tensor types has been widely available in many fields, such as image and video processing (see~\cite{liu2013tensor}, \cite{westin2002processing}, \cite{hildebrand1997new}, \cite{li2010tensor}, \cite{vasilescu2002multilinear}); latent variable modelling (see \cite{anandkumar2014tensor}, \cite{cichocki2015tensor}, \cite{chaganty2013spectral}); genomic signal processing (\cite{omberg2007tensor}, \cite{muralidhara2011tensor} and \cite{ponnapalli2011higher}) and references therein. It is demanding to handle these datasets in order to take the most advantages of the tensor structures. The task is challenging due to the highly non-convexity of tensor related optimization problems. For instance, computing the tensor operator norm is generally NP-hard (see, e.g., \cite{hillar2013most}) while it can be implemented fast for matrices.

The higher order singular value decomposition (HOSVD) is one machinery to deal with tensors which generalizes the matrix SVD to higher order tensors, see \cite{zheng2015interpolating}, \cite{de2000multilinear}, \cite{bergqvist2010higher}, \cite{chen2009tensor} and \cite{kolda2009tensor}. The conceptual simplicity and computational efficiency make HOSVD popular.  It has been successfully applied on various statistical learning tasks, for instance,  face recognition (see \cite{vasilescu2002multilinear}),  genomic signal processing (see \cite{muralidhara2011tensor}) and more examples in a survey paper (\cite{acar2009unsupervised}). Basically,
the HOSVD unfolds a higher order tensor into matrices and treat it with standard matrix techniques to obtain the principal singular subspaces in each dimension (see more details in Section~\ref{sec:prelim}).
Although the HOSVD shows appealing effectiveness, there are several fundamental theoretic mysteries yet to be uncovered.

One particularly important question is related with the perturbation of HOSVD when a low rank tensor is contaminated with stochastic noise. The difficulty comes from both methodological and theoretical aspects. The computation of HOSVD is essentially reduced to matrix SVD which can be implemented efficiently.
 This naive estimator is actually statistically sub-optimal. It is well-known that 
 further power iterations can ameliorate the naive spectral initializations and thus deliver statistically optimal estimators, see more details in \cite{richard2014statistical}, \cite{zhang2017tensor}, \cite{hopkins2015tensor}, \cite{liu2017characterizing} and references therein. Another intriguing phenomenon is on the phase transitions of the signal-to-noise ratio (SNR). Actually, the SNR exhibits distinct computational and statistical phase transitions, while the differences do not exist for matrix SVD.
 In particular, there is a gap on SNR between statistical optimality and computational optimality for HOSVD, see  \cite{zhang2017tensor}. For introductory simplicity \footnote{More general results where $\bA$ is $d_1\times d_2\times d_3$ with multilinear ranks $(r_1,r_2,r_3)$ can be found in Section~\ref{sec:mainthm}. The results of this article can be easily generalized to higher order tensors.}, we focus on the third-order tensors.  Suppose that an unknown tensor $\bA\in\RR^{d\times d\times d}$ with multilinear ranks $(r,r,r)$ is planted in a noisy observation $\bY$ with
\begin{equation}\label{eq:Y=A+Z}
\bY=\bA+\bZ\in\RR^{d\times d\times d}.
\end{equation}
The noise tensor $\bZ$ has i.i.d. entries with $Z(i,j,k)\sim\calN(0,\sigma^2)$ for $i,j,k\in[d]$ and noise variance $\sigma^2>0$. Here, we denote by $[d]:=\{1,\ldots,d\}$. The signal strength $\underline{\Lambda}(\bA)$ is defined as the smallest nonzero singular values of the matrices unfolded from $\bA$ (see definitions in Section~\ref{subsec:denoising}). Let $\bU,\bV,\bW\in\RR^{d\times r}$ denote the singular vectors of $\bA$ in the corresponding dimensions. It was proved (see \cite{zheng2015interpolating}, \cite{zhang2017tensor} and \cite{liu2017characterizing}) that if the signal strength $\underline{\Lambda}(\bA)\geq D_1\sigma d^{3/4}$ for a large enough constant $D_1>0$, the following bound holds
\begin{align*}
r^{-1/2} \max\Big\{\big\|\hat{\bU}\hat{\bU}^{\top}-\bU\bU^{\top}\big\|_{\ell_2}, \big\|\hat{\bV}\hat{\bV}^{\top}-\bV\bV^{\top}\big\|_{\ell_2}, \big\|\hat{\bW}&\hat{\bW}^{\top}-\bW\bW^{\top}\big\|_{\ell_2}\Big\}\\
&=O_p\bigg(\frac{\sigma d^{1/2}}{\underline{\Lambda}(\bA)}+\frac{\sigma d^{3/2}}{\underline{\Lambda}^2(\bA)}\bigg),
\end{align*}
where $\hat{\bU}, \hat{\bV}, \hat{\bW}$ represent the naive SVD obtained from noisy tensor $\bY$ and $\|\cdot\|_{\ell_2}$ denotes the Euclidean norm. Power iterations (also called higher order orthogonal iterations, see \cite{de2000best}) can improve the estimate (denoted by $\widetilde{\bU}, \widetilde{\bV}, \widetilde{\bW}$) to
\begin{align}\label{eq:mle}
r^{-1/2}\max\Big\{ \big\|\widetilde{\bU}\widetilde{\bU}^{\top}-\bU\bU^{\top}\big\|_{\ell_2}, \big\|\widetilde{\bV}\widetilde{\bV}^{\top}-\bV\bV^{\top}\big\|_{\ell_2}, \big\|\widetilde{\bW}\widetilde{\bW}^{\top}-&\bW\bW^{\top}\big\|_{\ell_2}\Big\}\nonumber\\
&=O_p\bigg(\frac{\sigma d^{1/2}}{\underline{\Lambda}(\bA)}\bigg),
\end{align}
which is statistically optimal (see \cite{zhang2017tensor}). Moreover, it is 
demonstrated in \cite{zhang2017tensor}, built on a hardness conjecture of the hyper-graphical planted clique detection problem, that if $\underline{\Lambda}(\bA)=o\big(\sigma d^{3/4}\big)$, then all polynomial time algorithms deliver trivial estimates of $\bU, \bV, \bW$ in general.

One focus of this article is on estimating the linear forms of tensor singular vectors in model (\ref{eq:Y=A+Z}). More specifically, let $\bU=\big(\bu_1,\ldots,\bu_r\big)\in \RR^{d\times r}$ be $\bA$'s singular vectors in certain mode, our goal is to estimate $\langle\bu_j, \bx\rangle$ for fixed $\bx\in\RR^{d}$ and $j=1,\ldots,r$. Through choosing $\bx$ all over the canonical basis vectors in $\RR^d$, we end up with an estimate of $\bu_j$ whose component-wise perturbation bound can be attained. Unlike the $\ell_2$-norm perturbation bound, the $\ell_\infty$ bound can characterize the entry-wise sign consistency and entry-wise significance (i.e. entry-wise magnitude)  of singular vectors. The component-wise signs of singular vectors are critical in numerous applications such as community detection (see \cite{florescu2015spectral}, \cite{newman2004detecting}, \cite{mitra2009entrywise} and \cite{jin2015fast}). The entry-wise significance is advantageous in sub-matrix localizations, see \cite{cai2015computational}, \cite{ma2015computational} and references therein. 
In Section~\ref{sec:app}, we show that the sup-norm perturbation bounds reveal unconventional phase transitions for the exact clustering in high dimensional Gaussian mixture model. Put it simply, algorithms based on the sup-norm bounds require weaker SNR conditions than algorithms driven by the $\ell_2$-norm bounds to guarantee exact clustering. Furthermore, it enables us to construct a low rank denoising estimator of $\bA$  so that entry-wise denoising is fulfilled. To the best of our knowledge, ours is the first result concerning the low rank tensor denoising with sharp entry-wise deviation bounds. In Section~\ref{sec:app}, we show that a simple algorithm based on the $\ell_\infty$ bounds can exactly recover the supports for sub-tensor localizations (see Remark~\ref{rmk:subtensor_loc}). 

To better highlight our contributions, suppose that $\bA$ is an orthogonally decomposable third order tensor with (in particular, the CP decomposition of orthogonally decomposable tensors) 
\begin{equation}\label{eq:CP}
\bA=\sum_{k=1}^r \lambda_k \big(\bu_k\otimes \bv_k\otimes \bw_k\big),\quad \lambda_1\geq\ldots\geq \lambda_r>0
\end{equation}
where $\bU=(\bu_1,\ldots,\bu_r), \bV=(\bv_1,\ldots,\bv_r)$ and $\bW=(\bw_1,\ldots,\bw_r)$ are $d\times r$ matrices containing orthonormal columns. 
The $k$-th eigengap is written as $\bar{g}_k\big(\calM_1(\bA)\big)=\bar{g}_k\big(\calM_2(\bA)\big)=\bar{g}_k\big(\calM_3(\bA)\big)=\min\big(\lambda_{k-1}-\lambda_k, \lambda_k-\lambda_{k+1}\big)$ where $\calM_j(\bA)$ represents the matrices from unfoldings of $\bA$ (see Section~\ref{sec:prelim}). We preset $\lambda_0=+\infty$ and $\lambda_{r+1}=0$ for notational consistency. 
We show that, if $\bar{g}_k\big(\calM_1(\bA)\calM_1^{\top}(\bA)\big)\geq D_1\big(\sigma \lambda_1d^{1/2}+\sigma^2 d^{3/2}\big)$ for a large enough absolute constant $D_1>0$, then the following bound holds for any $\bx\in\RR^d$,
\begin{gather}\label{eq:hatuk_x}
\Big|\langle\hat{\bu}_k, \bx\rangle-(1+b_k)^{1/2}\langle\bu_k, \bx\rangle\Big|=O_p\bigg(\|\bx\|_{\ell_2}\frac{\lambda_1\sigma+d\sigma^2}{\bar{g}_k\big(\calM_1(\bA)\calM_1^{\top}(\bA)\big)}\bigg)=O_p\bigg(\frac{\|\bx\|_{\ell_2}}{d^{1/2}}\bigg).
\end{gather}
where $b_k\in[-1/2,0]$ is a constant which does not depend on $\bx$. The $d\times r$ matrix $\hat \bU=(\hat \bu_1,\cdots,\hat \bu_k)$ represent the empirical left singular vectors of mode-$1$ unfolding of $\bY$ satisfying model (\ref{eq:Y=A+Z}).

In the special case that  $r=1$ (rank one spiked tensor PCA model, see \cite{richard2014statistical}) such that $\underline{\Lambda}(\bA)=\bar{g}_1\big(\calM_1(\bA)\big)=\lambda_1$, we get from (\ref{eq:hatuk_x}) that 
\begin{equation}\label{eq:hatu1_x}
\Big|\langle\hat{\bu}_1, \bx\rangle-(1+b_1)^{1/2}\langle\bu_1, \bx\rangle\Big|=O_p\bigg(\frac{\sigma}{\underline{\Lambda}(\bA)}+\frac{\sigma^2d}{\underline{\Lambda}^2(\bA)}\bigg)\|\bx\|_{\ell_2}.
\end{equation}
By taking $\bx$ over the canonical basis vectors in $\RR^d$, the bounds in (\ref{eq:hatu1_x}) imply that
\begin{equation}\label{eq:hatu1_infty}
\big\|\hat{\bu}_1-(1+b_1)^{1/2}\bu_1\big\|_{\ell_{\infty}}=O_p\bigg(\Big(\frac{\log d}{d}\Big)^{1/2}\bigg)
\end{equation}
under the eigengap condition $\lambda_1\gg \sigma d^{3/4}$. It is the standard requirement in tensor PCA.  
\footnote{We shall point out that a similar result on matrix SVD has appeared in \cite{koltchinskii2016perturbation} which is sub-optimal for tensors or unbalanced matrices. Indeed, the result in \cite{koltchinskii2016perturbation} is established under the eigengap condition $\lambda_1\geq D_1\sigma d$. } Based on (\ref{eq:hatu1_infty}), we propose a low rank tensor estimator (denoted by $\hat{\bA}$) under the same SNR requirements such that 
\begin{align}\label{eq:hatA_infty}
\|\hat{\bA}-\bA\|_{\ell_\infty}=O_p\bigg(\Big(\frac{\sigma^2 d}{\lambda_1}+\sigma\Big)\big(\|\bu_1\|_{\ell_\infty}\|\bv_1\|_{\ell_\infty}+\|\bu_1\|_{\ell_\infty}\|\bw_1\|_{\ell_\infty}+\|\bv_1\|_{\ell_\infty}\|\bw_1\|_{\ell_\infty}\big)\bigg).
\end{align}
Equation (\ref{eq:hatA_infty}) shows that the entry-wise denoising bound of the novel estimator $\hat \bA$ is determined by the coherences of the singular vectors $\bu_1, \bv_1$ and  $\bw_1$. In particular, if $\bu_1, \bv_1, \bw_1$ are incoherent so that $\max\{\|\bu_1\|_{\ell_\infty}, \|\bv_1\|_{\ell_\infty}, \|\bw_1\|_{\infty}\}=O\big(\frac{1}{\sqrt{d}}\big)$, then equation (\ref{eq:hatA_infty}) implies that 
$$
\|\hat \bA -\bA\|_{\ell_\infty}=O_p\Big(\frac{\sigma^2}{\lambda_1}+\frac{\sigma}{d}\Big).
$$

Our main contribution is on the theoretical front. The HOSVD is essentially the standard SVD computed on an unbalanced matrix where the column size is much larger than the row size. The perturbation tools, such as Wedin's $\sin\Theta$ theorem (\cite{wedin1972perturbation}), characterize the $\ell_2$ bounds through the larger dimension, even when the left singular space lies in a low dimensional space. 
At the high level, the HOSVD is connected to the one-sided spectral analysis (see, e.g.,\cite{zheng2015interpolating}, \cite{wang2015singular}, \cite{cai2016rate} and references therein) which provide sharp perturbation bounds in $\ell_2$-norm. There are recent bounds  (see \cite{fan2016ell} and \cite{cape2017two}) in $\ell_\infty$-norm developed under additional constraint (incoherent singular subspaces) and structural noise (sparse noise). To obtain a sharp $\ell_\infty$-norm bound, we borrow the instruments invented by \cite{koltchinskii2016asymptotics} and extensively applied in  \cite{koltchinskii2016perturbation}. Our framework starts from a second order method of estimating the singular subspaces, which improves the eigengap condition than the first order method.
Similar techniques have been proposed for tensor completion (\cite{xia2017polynomial}) and tensor PCA (\cite{zheng2015interpolating} and \cite{liu2017characterizing}). The success of this seemingly natural treatment hinges upon delicate dealing with the correlations among higher order terms. We benefit from these $\ell_\infty$-norm perturbation bounds by proposing a low rank estimator for tensor denoising where the entry-wise deviation error is guaranteed by the tensor incoherence conditions. 

We organize our paper as follows. Tensor notations and preliminaries on HOSVD are explained in Section~\ref{sec:prelim}. Our main theoretical contributions are presented in Section~\ref{sec:mainthm} which includes the $\ell_\infty$-norm bound of the singular subspace perturbation and the entry-wise accuracy of a low rank tensor denoising estimator. In Section~\ref{sec:app}, we apply our theoretical results on applications including high dimensional clustering and sub-tensor localizations to manifest the advantages of utilizing $\ell_\infty$ bounds, where algorithms driven by the $\ell_\infty$-norm bounds are designed. Results of numerical experiments are displayed in Section~\ref{sec:numeric}. The proofs are provided in Section~\ref{sec:proofs}. 

\section{Preliminaries on Tensor and HOSVD}\label{sec:prelim}
\subsection{Notations}
We first review some notations that will be used through the paper. We use boldfaced upper-case letters to denote tensors or matrices, and use the same letter in normal font with indices to denote its entries. We use boldfaced lower-case letters to represent vectors, and the same letter in normal font with indices to represent its entries. For notationally simplicity, our main context is focused on third-order tensors, while our results can be easily generalized to higher order tensors.

Given a third-order tensor $\bA\in\RR^{d_1\times d_2\times d_3}$, define a linear mapping $\calM_1: \RR^{d_1\times d_2\times d_3}\mapsto \RR^{d_1\times (d_2d_3)}$ such that
$$
\calM_1(\bA)\big(i_1, (i_2-1)d_3+i_3\big)=A(i_1,i_2,i_3),\quad i_1\in[d_1], i_2\in[d_3], i_3\in[d_3]
$$
which is conventionally called the unfolding (or matricization) of tensor $\bA$. It is also called the mode-$1$ unfolding of $\bA$. The columns of matrix $\calM_1(\bA)$ are called the mode-$1$ fibers of $\bA$. The corresponding matricizations $\calM_2(\bA)$ and $\calM_3(\bA)$ can be defined in a similar fashion. The multilinear ranks of $\bA$ are then defined by:
\begin{align*}
r_1(\bA):=\rank\big(\calM_1(\bA)\big),\quad
r_2(\bA):=\rank\big(\calM_2(\bA)\big),\quad
r_3(\bA):=\rank\big(\calM_3(\bA)\big)
\end{align*}
Note that $r_1(\bA), r_2(\bA), r_3(\bA)$ are unnecessarily equal with each other in general. We write $\br(\bA):=\big(r_1(\bA), r_2(\bA), r_3(\bA)\big)$ which are also called the Tucker ranks of $\bA$.

The marginal product $\times_1: \RR^{r_1\times r_2\times r_3}\times \RR^{d_1\times r_1}\mapsto \RR^{d_1\times r_2\times r_3}$ is given by
$$
\bC\times_1 \bU=\bigg(\sum_{j_1=1}^{r_1} C(j_1,j_2,j_3) U(i_1,j_1)\bigg)_{i_1\in[d_1], j_2\in[r_2], j_3\in[r_3]},
$$
and $\times_2$ and $\times_3$ are defined similarly. Therefore, we write the multilinear product of tensors $\bC\in\RR^{r_1\times r_2\times r_3}, \bU\in\RR^{d_1\times r_1}, \bV\in\RR^{d_2\times r_2}$ and $\bW\in\RR^{d_3\times r_3}$ as
$$
\bC\cdot(\bU,\bV,\bW)=\bC\times_1\bU\times_2\bV\times_3\bW\in\RR^{d_1\times d_2\times d_3}.
$$
We use $\|\cdot\|$ to denote the operator norm of matrices and $\|\cdot\|_{\ell_2}$ and $\|\cdot\|_{\ell_\infty}$ to denote $\ell_2$ and $\ell_{\infty}$ norms of vectors, or vectorized matrices and tensors.

\subsection{HOSVD and Eigengaps}
For a tensor $\bA\in\RR^{d_1\times d_2\times d_3}$ with multilinear ranks $\br(\bA)=\big(r_1(\bA),r_2(\bA), r_3(\bA)\big)$, 
let $\bU\in \RR^{d_1\times r_1(\bA)}, \bV\in\RR^{d_2\times r_2(\bA)}$ and $\bW\in\RR^{d_3\times r_3(\bA)}$ be the left singular vectors of $\calM_1(\bA), \calM_2(\bA)$ and $\calM_3(\bA)$ respectively, which can be computed efficiently via matricization followed by thin singular value decomposition.
The higher order singular value decomposition (HOSVD) refers to the decomposition
\begin{equation}\label{eq:HOSVD}
\bA=\bC\times_1 \bU\times_2 \bV\times_3 \bW
\end{equation}
where the $r_1(\bA)\times r_2(\bA)\times r_3(\bA)$ core tensor $\bC$ is obtained by
$
\bC:=\bA\times_1 \bU^{\top}\times_2 \bV^{\top}\times_3 \bW^{\top}.
$

Suppose that a noisy version of $\bA$ is observed as in model (\ref{eq:Y=A+Z}) so that 
$$
\bY=\bA+\bZ
$$
where $\bZ\in\mathbb{R}^{d_1\times d_2\times d_3}$ is an unknown noise tensor with i.i.d. entries satisfying
$Z(i,j,k)\sim\calN(0,\sigma^2)$. By observing $\bY$, our goal is to estimate $\bU,\bV$ and $\bW$. An immediate approach is to compute HOSVD of $\bY$. To this end,
let $\hat{\bU}\in\RR^{d_1\times r_1}, \hat{\bV}\in\RR^{d_2\times r_2}, \hat{\bW}\in\RR^{d_3\times r_3}$ be the corresponding top singular vectors of $\calM_1(\bY), \calM_2(\bY)$ and $\calM_3(\bY)$. 
The key factor characterizing the perturbation bounds of $\hat{\bU}, \hat\bV$ and $\hat\bW$  is the so-called eigengap. 

Since the computing of $\hat{\bU}$ is essentially via the matrix SVD on $\calM_1(\bA)$, it suffices to consider the eigengaps of matrices.
Given a rank $r$ matrix $\bM\in\RR^{m_1\times m_2}$ with SVD:
$$
\bM=\sum_{k=1}^{r}\lambda_k\big(\bg_k\otimes \bh_k\big)
$$
where $\bM$'s singular values are $\lambda_1\geq \lambda_2\geq\ldots\geq \lambda_r> 0$ and $\{\bg_1,\ldots,\bg_r\}$ are the corresponding left singular vectors and $\{\bh_{1},\ldots,\bh_r\}$ are $\bM$'s corresponding right singular vectors. We further introduce $\lambda_0=+\infty$ and $\lambda_{r+1}=0$.
The $k$-th eigengap of matrix $\bM$ is then defined by
$$
\bar{g}_k(\bM):=\min\big(\lambda_k-\lambda_{k+1}, \lambda_{k-1}-\lambda_{k}\big),\quad \forall\ 1\leq k\leq r.
$$
Recall that $\bU, \hat\bU\in\RR^{d_1\times r_1}$ are the top-$r_1$ left singular vectors of $\calM_1(\bA)$ and $\calM_1(\bY)$ respectively. By Davis-Kahan Theorem (\cite{davis1970rotation}) or Wedin's $\sin\Theta$ theorem (\cite{wedin1972perturbation}), we get
\begin{equation}\label{eq:wedin}
\|\hat{\bU}\hat{\bU}^{\top}-\bU\bU^{\top}\|=O\bigg(\frac{\|\calM_1(\bZ)\|}{\bar{g}_{r_1}\big(\calM_1(\bA)\calM_1^{\top}(\bA)\big)}\bigg),
\end{equation}
which is generally sub-optimal especially when $\calM_1(\bZ)\in\RR^{d_1\times (d_2d_3)}$ is unbalanced such that $d_2d_3\gg d_1$. Sharper bounds in $\ell_2$-norm concerning one sided perturbation have been derived in \cite{zheng2015interpolating}, \cite{wang2015singular} and \cite{cai2016rate}. In this paper, we derive sharp perturbation bounds of $\hat\bU, \hat\bV, \hat\bW$ in $\ell_\infty$-norm which illustrate unconventional phase transitions for various statistical learning applications. More generally, we 
will investigate the perturbation bounds of linear forms $\langle \hat{\bu}_k, \bx\rangle$ for any fixed vector $\bx\in\RR^{d_1}$. Similar results can be obtained for singular vectors $\hat{\bV}$ and $\hat{\bW}$.

\section{Main Results}\label{sec:mainthm}
\subsection{Second Order Method for One-sided Spectral Analysis}
The $\ell_\infty$-norm perturbation bounds for singular subspaces of balanced matrices has been developed in \cite{koltchinskii2016perturbation}.
Recall that $\bu_k$ denotes the $k$-th left singular vector of $\calM_1(\bA)$ and $\hat{\bu}_k$ denotes the $k$-th left singular vector of $\calM_1(\bY)$ where $\calM_1(\bA)$ is of size $d_1\times (d_2d_3)$.
The operator norm $\|\calM_1(\bZ)\|$ is generally determined by the larger dimension $(d_1\vee d_2d_3)$, see Section~\ref{sec:proofs}.
It turns out that the machinery in \cite{koltchinskii2016perturbation} is sub-optimal concerning the SNR requirement.  Indeed, the eigengap requirement in \cite{koltchinskii2016perturbation} becomes $\bar{g}_k\big(\calM_1(\bA)\calM_1^{\top}(\bA)\big)\gg \sigma\big(d_1\vee d_2d_3\big)^{1/2}$, which shall is unnecessarily strong in view of the recent results in \cite{zheng2015interpolating}, \cite{cai2016rate}, \cite{zhang2017tensor} and \cite{liu2017characterizing}.

To bridge such gaps, we conduct a second order spectral analysis for $\hat{\bU}$. 
The key observation is that the top left singular vectors of $\calM_1(\bY)$ are also the top eigenvectors of $\calM_1(\bY)\calM_1^{\top}(\bY)$. The second order method seeks the eigenspace perturbation on $\calM_1(\bY)\calM_1^{\top}(\bY)$ instead of singular space perturbation on $\calM_1(\bY)$.
 Clearly, we have
$$
\calM_1(\bY)\calM_1^{\top}(\bY)=\calM_1(\bA)\calM_1^{\top}(\bA)+\bGamma\in\mathbb{R}^{d_1\times d_1}
$$
where $\bGamma=\calM_1(\bA)\calM_1^{\top}(\bZ)+\calM_1(\bZ)\calM_1^{\top}(\bA)+\calM_1(\bZ)\calM_1^{\top}(\bZ)$. Note that $\bU$ are the leading eigenvectors of $\calM_1(\bA)\calM_1^{\top}(\bA)$ and $\hat{\bU}$ are the top-$r_1$ eigenvectors of $\calM_1(\bY)\calM_1^{\top}(\bY)$. Moreover, the following relation on eigengaps is obvious:
$$
\bar{g}_{r_1}\Big(\calM_1(\bA)\calM_1^{\top}(\bA)\Big)\geq \bar{g}_{r_1}^2\big(\calM_1(\bA)\big).
$$
The advantage of second order method comes from the observation that even though
$\mathbb{E}\big\|\calM_1(\bZ)\calM_1^{\top}(\bZ)\big\|$ is of the order $\sigma^2(d_1\vee d_2d_3)$, the symmetric matrix $\calM_1(\bZ)\calM_1^{\top}(\bZ)$ is concentrated at $d_2d_3\sigma^2\bI_{d_1}$ such that (see more details in Section~\ref{sec:proofs})
$$
\big\|\calM_1(\bZ)\calM_1^{\top}(\bZ)-\sigma^2d_2d_3\bI_{d_1}\big\|=O_p\Big(\sigma^2\big(d_1d_2d_3\big)^{1/2}\Big).
$$
Note that subtracting by an identity matrix does not affect the eigen-structure. The second order method introduces the additional term $\calM_1(\bA)\calM_1^{\top}(\bZ)$ whose operator norm is bounded by $O_p\big(\sigma\sqrt{d_1}\big\|\calM_1(\bA)\big\|\big)$, which creates a constraint on the condition number of $\calM_1(\bA)$. However, in order to characterize sharp perturbation bounds of linear forms $\langle\hat{\bu}_k, \bx\rangle$, we need to pay more attention to dealing with correlations among the higher order terms than the first order method in \cite{koltchinskii2016perturbation}. {We note that the idea of second order method is already existing in the literature (see, e.g., \cite{zheng2015interpolating} for the $\ell_2$-norm perturbation bounds). The second order moment method is only the starting point of our technical analysis which significantly reduces the SNR requirements. Our most fundamental contribution is about the sup-norm characterization of the empirical singular vectors. Basically, we observe that the empirical singular vectors are biased and the bias is nicely aligned with the true singular vectors. After subtracting the bias, the empirical singular vectors exhibit the so-called {\it delocalization} property where all the entry-wise perturbations have comparable magnitudes. Such {\it delocalization} property is universal meaning that no conditions on the true singular vectors are needed.
In Section~\ref{sec:app}, we show that the sup-norm perturbation bounds indeed reveal unconventional phase transitions in statistical learning applications such as the exact clustering in high dimensional Gaussian mixture models and the exact support recovery in sub-tensor localizations. }

\subsection{Perturbation of Linear Forms of Singular Vectors}
In this section, we present our main theorem characterizing the perturbation of linear forms $\langle\hat{\bu}_k, \bx\rangle$ for any $\bx\in\RR^{d_1}$, where $\hat{\bu}_k$ is the $k$-th left singular vector of $\calM_1(\bY)$. Our results have similar implications as the previous work \cite{koltchinskii2016perturbation}, meaning that the bias $\mathbb{E}\hat{\bu}_k\hat\bu_k^{\top}-\bu_k\bu_k^{\top}$ is well aligned with $\bu_k\bu_k^{\top}$. Therefore, after correcting the bias term, we are able to obtain a sharper estimation of linear forms $\langle\bu_k, \bx\rangle$. To this end, we denote the condition number of the matrix $\calM_1(\bA)$ by
$$
\kappa\big(\calM_1(\bA)\big)=\frac{\lambda_{\max}\big(\calM_1(\bA)\big)}{\lambda_{\min}\big(\calM_1(\bA)\big)}
$$
where $\lambda_{\max}(\cdot)$ and $\lambda_{\min}(\cdot)$ return the largest and smallest nonzero singular values, respectively. Since $\hat\bu_k$ is up to the switch of signs, we choose $\hat \bu_k$ in the following theorems, remarks and corollaries so that $\big<\hat \bu_k, \bu_k\big>>0$. 

\begin{theorem}\label{thm:linearform}
Let\footnote{Observe that if we set $d_3=1$ and consider the case with $d_1\ll d_2$, then Theorem~\ref{thm:linearform} elaborates the one-sided perturbation bounds in $\ell_\infty$-norm for singular vectors  of unbalanced (or fat) matrices.}
$\bM:=\calM_1(\bA)$ and $\delta(d_1,d_2,d_3):=\sigma d_1^{1/2}\|\bM\|+\sigma^2(d_1d_2d_3)^{1/2}$ and suppose $d_2d_3e^{-d_1/2}\leq 1$. There exist absolute constants $D_1, D_2>0$ such that the following fact holds. Let $\bu_k$ be $\bM$'s $k$-th left singular vector with multiplicity $1$. If $\bar{g}_k\big(\bM\bM^{\top}\big)\geq D_1\delta(d_1,d_2,d_3)$, there exist a constant $b_k\in[-1/2,0]$ with $|b_k|\leq \frac{\sqrt{2}\delta(d_1,d_2,d_3)}{\bar{g}_k(\bM\bM^{\top})}$ such that for any $\bx$, the following bound holds with probability at least $1-e^{-t}$,
\begin{align}
\big|\langle\hat{\bu}_k, \bx\rangle &-(1+b_k)^{1/2}\langle\bu_k,\bx \rangle \big|\nonumber\\
\leq& D_2\bigg(t^{1/2}\frac{\sigma\|\bM\|+\sigma^2(d_2d_3)^{1/2}}{\bar{g}_k(\bM\bM^{\top})}+\frac{\sigma^2d_1}{\bar{g}_k(\bM\bM^{\top})}\Big(\frac{\delta(d_1,d_2,d_3)}{\bar{g}_k(\bM\bM^{\top})}\Big)\bigg)\|\bx\|_{\ell_2}\label{eq:thm-hatuk_x}
\end{align}
for all $\log8\leq t\leq d_1$.  In particular, if $\bx=\pm \bu_k$, then with the same probability,
\begin{align*}
\big||\langle \hat \bu_k, \bu_k\rangle|-1\big|&\leq \big|\sqrt{1+b_k}-1\big|\\
+&D_2\bigg(t^{1/2}\frac{\sigma\|\bM\|+\sigma^2(d_2d_3)^{1/2}}{\bar{g}_k(\bM\bM^{\top})}+\frac{\sigma^2d_1}{\bar{g}_k(\bM\bM^{\top})}\Big(\frac{\delta(d_1,d_2,d_3)}{\bar{g}_k(\bM\bM^{\top})}\Big)\bigg).
\end{align*}
\end{theorem}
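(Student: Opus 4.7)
The plan is to replace the (rectangular) matrix perturbation problem by a Hermitian eigenvector perturbation problem on the $d_1\times d_1$ Gram matrix, following the second-order strategy described in Section~3.1. Writing
$$\calM_1(\bY)\calM_1^{\top}(\bY)-\sigma^2 d_2 d_3\,\bI_{d_1}=\bM\bM^{\top}+\widetilde{\bE},\qquad \widetilde{\bE}:=\bM\calM_1(\bZ)^{\top}+\calM_1(\bZ)\bM^{\top}+\big(\calM_1(\bZ)\calM_1^{\top}(\bZ)-\sigma^2 d_2 d_3\bI_{d_1}\big),$$
the identity shift leaves all eigenvectors unchanged, so $\hat\bu_k$ is still the $k$-th eigenvector. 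The first task is to bound $\|\widetilde{\bE}\|\lesssim \delta(d_1,d_2,d_3)$ with probability $1-e^{-t}$: the cross term is controlled by $\|\bM\|\cdot\|\calM_1(\bZ)^{\top}\bU\|\lesssim \sigma d_1^{1/2}\|\bM\|$ via the fact that $\bU^{\top}\calM_1(\bZ)$ has Gaussian entries and only $r_1$ rows, and the centered Wishart term obeys $\|\calM_1(\bZ)\calM_1^{\top}(\bZ)-\sigma^2 d_2 d_3\bI_{d_1}\|\lesssim \sigma^2(d_1 d_2 d_3)^{1/2}$ by standard concentration. The assumption $d_2d_3 e^{-d_1/2}\leq 1$ lines up the tail probabilities across all these events.

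Next I invoke the resolvent/contour-integral framework of Koltchinskii--Lounici to write the spectral projector
$$\hat\bP_k=\bP_k+\sum_{m\geq 1}\bL_m(\widetilde{\bE}),\qquad \bL_m(\widetilde{\bE})=-\frac{1}{2\pi i}\oint_{\gamma_k}\bigl(\bR(\lambda)\widetilde{\bE}\bigr)^m\bR(\lambda)\,d\lambda,$$
where $\bR(\lambda)=(\lambda\bI-\bM\bM^{\top})^{-1}$ and $\gamma_k$ is a contour of radius $\bar g_k(\bM\bM^{\top})/2$ around $\lambda_k^2$. The condition $\bar g_k(\bM\bM^{\top})\geq D_1\delta$ makes the series converge, with $\|\bL_m(\widetilde{\bE})\|\leq (C\|\widetilde{\bE}\|/\bar g_k)^m$. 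Defining $b_k:=\langle\bu_k,(\hat\bP_k-\bP_k)\bu_k\rangle$ (which is deterministic once passed through the proof's symmetry/expectation step and lies in $[-1/2,0]$ by $|\langle\bu_k,\hat\bP_k\bu_k\rangle|\leq 1$ together with the $m=1,2$ bounds), the identity
$$\hat\bu_k=\frac{\hat\bP_k\bu_k}{\sqrt{\langle\bu_k,\hat\bP_k\bu_k\rangle}}$$
gives
$$\langle\hat\bu_k,\bx\rangle-(1+b_k)^{1/2}\langle\bu_k,\bx\rangle=\frac{\langle\bP_k^{\perp}(\hat\bP_k-\bP_k)\bu_k,\bx\rangle}{(1+b_k)^{1/2}},$$
so the linear-form error reduces to bounding $\sum_{m\geq 1}\langle\bP_k^{\perp}\bL_m(\widetilde{\bE})\bu_k,\bx\rangle$.

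For the leading term $m=1$, a direct expansion yields
$$\langle\bP_k^{\perp}\bL_1(\widetilde{\bE})\bu_k,\bx\rangle=\sum_{j\neq k}\frac{\langle\bu_j,\widetilde{\bE}\bu_k\rangle\langle\bu_j,\bx\rangle}{\lambda_k^2-\lambda_j^2},$$
which is a linear function of the Gaussian entries of $\bZ$ (plus a quadratic piece from the Wishart contribution). A variance computation produces variance $\lesssim \bigl(\sigma\|\bM\|+\sigma^2(d_2d_3)^{1/2}\bigr)^2\|\bx\|_{\ell_2}^2/\bar g_k^2(\bM\bM^{\top})$, and Gaussian/Hanson--Wright concentration then contributes the first summand $t^{1/2}(\sigma\|\bM\|+\sigma^2(d_2d_3)^{1/2})/\bar g_k\cdot \|\bx\|_{\ell_2}$. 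Higher order terms $m\geq 2$ are controlled by the crude bound $\|\bL_m(\widetilde{\bE})\|\leq (C\delta/\bar g_k)^m$, but this alone is too loose for $m=2$; here one must replace $\|\widetilde{\bE}\|^2$ by an appropriately centered bilinear bound exploiting $\mathbb{E}\widetilde{\bE}=0$ and the Wishart concentration, which yields the term $\sigma^2 d_1/\bar g_k\cdot (\delta/\bar g_k)$ in \eqref{eq:thm-hatuk_x}. The specialization $\bx=\pm\bu_k$ then follows by writing $|\langle\hat\bu_k,\bu_k\rangle|-1=(\sqrt{1+b_k}-1)+R$ and applying the triangle inequality to the remainder $R$.

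The main obstacle will be the $m=2$ term: bounding it by $(\|\widetilde{\bE}\|/\bar g_k)^2$ gives a weaker rate than stated, so one has to dissect $\bL_2(\widetilde{\bE})\bu_k$ into cross terms between the signal-noise products $\bM\calM_1(\bZ)^{\top}$ and the centered Wishart piece, carefully exploiting that many of the quadratic-in-$\bZ$ cross terms either vanish in expectation or enjoy sharper concentration than their operator-norm surrogates. This delicate correlation bookkeeping, echoing \cite{koltchinskii2016perturbation} but now in the asymmetric tensor-unfolding regime $d_2d_3\gg d_1$, is where the bulk of the technical effort will sit.
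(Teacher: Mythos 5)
Your overall strategy is the same as the paper's: pass to the centered Gram matrix $\calM_1(\bY)\calM_1^{\top}(\bY)-\sigma^2 d_2d_3\bI_{d_1}$, expand the spectral projector in a resolvent series around $\bM\bM^{\top}$, correct by a deterministic bias aligned with $\bu_k$, and use Gaussian concentration for the linear term. The bound $\|\widetilde{\bE}\|\lesssim\delta(d_1,d_2,d_3)$ and the $m=1$ variance computation are fine. But as written the proposal has a genuine gap, and it sits exactly where the theorem's content lies. You acknowledge that the crude bound $\|\bL_2(\widetilde{\bE})\|\leq (C\delta/\bar g_k)^2$ is too weak (indeed $\delta^2/\bar g_k^2\gg \sigma^2 d_1\delta/\bar g_k^2$ whenever $\|\bM\|\gg\sigma d_1^{1/2}$), yet you never carry out the refinement; you only assert that "centered bilinear bounds" and "correlation bookkeeping" will yield $\sigma^2 d_1\delta/\bar g_k^2$. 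That refinement is the bulk of the paper's proof: one must (i) replace $\hat{\bGamma}$ by $\widetilde{\bGamma}=\hat{\bGamma}-(\bZ\bP_k^{hh}\bZ^{\top}-\nu_k\sigma^2\bI)$ and show the swap costs only $\frac{\sigma\mu_1+\sigma^2 d_1}{\bar g_k}\cdot\frac{\delta}{\bar g_k}$; (ii) expand $(\bP_k^{uu})^{\perp}\bS_k(\widetilde{\bGamma})\bP_k^{uu}$ via the contour integral and exploit that $\{\langle\bz_i,\bh_j\rangle\}$ (resp.\ $\{\langle\bz_i,\bu_j\rangle\}$) are \emph{independent} of the operator-valued factors built from $\bP_t^{uu}\bA\bz_i$ and $\bP_{k'}^{hh}\bz_i$, so that conditional Gaussian computations give the expectation bound $\frac{\sigma\mu_1+\sigma^2(d_2d_3)^{1/2}+\sigma^2 d_1}{\bar g_k}\cdot\frac{\delta}{\bar g_k}$; and (iii) control the \emph{fluctuation} of the whole nonlinear remainder $\langle\bx,\bS_k(\hat\bGamma)\by\rangle$ about its mean at scale $t^{1/2}\frac{\sigma\|\bM\|+\sigma^2(d_2d_3)^{1/2}}{\bar g_k}\cdot\frac{\delta}{\bar g_k}$, which the paper does not get from Hanson--Wright on the series but from a Lipschitz truncation of $\bZ\mapsto\langle\bx,\bS_k(\hat\bGamma(\bZ))\by\rangle$ combined with the Gaussian isoperimetric inequality. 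None of these steps is routine, and your plan does not indicate how you would obtain them.

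A secondary but real problem is the handling of $b_k$. You define $b_k:=\langle\bu_k,(\hat\bP_k-\bP_k)\bu_k\rangle$, which is random, and then present the identity $\langle\hat\bu_k,\bx\rangle-(1+b_k)^{1/2}\langle\bu_k,\bx\rangle=(1+b_k)^{-1/2}\langle\bP_k^{\perp}(\hat\bP_k-\bP_k)\bu_k,\bx\rangle$. With the random $b_k$ this identity is correct but $b_k$ is not the constant the theorem demands; with the deterministic choice $b_k=\EE\langle\hat\bu_k,\bu_k\rangle^2-1$ (the paper's choice) the identity fails, and you must additionally bound the fluctuation $|\langle\bu_k,\hat\bP_k\bu_k\rangle-(1+b_k)|$ — i.e.\ the concentration of the bilinear form at $\bx=\by=\bu_k$ — before dividing by $\langle\hat\bu_k,\bu_k\rangle$, together with the structural fact that the off-diagonal part of the bias $\EE\hat\bP_k-\bP_k$ is of order $\frac{\sigma^2 d_1}{\bar g_k}\cdot\frac{\delta}{\bar g_k}$ (the paper's bias theorem). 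This is repairable with the same machinery, but as stated the reduction conflates random and deterministic quantities and hides the very term whose estimation is nontrivial.
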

By Theorem~\ref{thm:linearform}, it is easy to check that the condition $\bar{g}_k\big(\calM_1(\bA)\calM_1^{\top}(\bA)\big)\geq D_1\delta(d_1,d_2,d_3)$ holds whenever
$$
\bar{g}_k\big(\calM_1(\bA)\big)\geq D_1\Big(\sigma (d_1d_2d_3)^{1/4}+\sigma d_1^{1/2}\kappa\big(\calM_1(\bA)\big)\Big).
$$
If $\kappa\big(\calM_1(\bA)\big)\leq \big(\frac{d_2d_3}{d_1}\big)^{1/4}$, the above bound becomes $\bar{g}_k\big(\calM_1(\bA)\big)\gg \sigma (d_1d_2d_3)^{1/4}$ which is a standard requirement in tensor SVD or PCA, see \cite{zheng2015interpolating}, \cite{zhang2017tensor}, \cite{hopkins2015tensor} and \cite{richard2014statistical}. By taking $\bx$ over the standard basis vectors in $\RR^{d_1}$ and choosing $t\geq D_3 \log d_1$, we end up with a $\ell_{\infty}$-norm perturbation bound for empirical singular vector $\hat{\bu}_k$.


\begin{corollary}\label{cor:linfty}
Under the conditions in Theorem~\ref{thm:linearform}, there exists a universal constant $D_1>0$ such that the following bound holds with probability at least $1-\frac{1}{d_1}$,
$$
\big\|\hat{\bu}_k-(1+b_k)^{1/2}\bu_k\big\|_{\ell_{\infty}}\leq D_1\bigg(\Big(\frac{\log d_1}{d_1}\Big)^{1/2}+\Big(\frac{d_1}{d_2d_3}\Big)^{1/2}\bigg).
$$
\end{corollary}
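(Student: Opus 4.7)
The plan is to deduce the corollary directly from Theorem~\ref{thm:linearform} by evaluating the linear-form bound on canonical basis vectors and stitching the coordinatewise estimates together with a union bound over $j \in [d_1]$.

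First, for each fixed $j \in [d_1]$ I would apply Theorem~\ref{thm:linearform} with $\bx = \be_j$, the $j$-th standard basis vector of $\RR^{d_1}$, and with $t = 2\log d_1$. Since $\|\be_j\|_{\ell_2} = 1$ and $\langle \hat{\bu}_k - (1+b_k)^{1/2}\bu_k, \be_j\rangle$ is exactly the $j$-th coordinate of the vector whose $\ell_\infty$-norm we want to bound, this yields a coordinatewise estimate valid with probability at least $1 - d_1^{-2}$. The admissibility condition $\log 8 \leq t \leq d_1$ holds once $d_1$ exceeds a small numerical threshold, and crucially $b_k \in [-1/2,0]$ does not depend on $\bx$, so the \emph{same} random vector $\hat{\bu}_k - (1+b_k)^{1/2}\bu_k$ is being probed across all indices $j$.

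Second, I would simplify the right-hand side of (\ref{eq:thm-hatuk_x}) using the hypothesis $\bar{g}_k(\bM\bM^\top) \geq D_1 \delta(d_1,d_2,d_3)$ with $\delta(d_1,d_2,d_3) = \sigma d_1^{1/2}\|\bM\| + \sigma^2 (d_1 d_2 d_3)^{1/2}$. Both $\sigma\|\bM\|$ and $\sigma^2(d_2 d_3)^{1/2}$ are at most $\delta/d_1^{1/2}$, so the first summand in the bound is of order $t^{1/2}/d_1^{1/2} = (\log d_1 / d_1)^{1/2}$. For the second summand I would use $\delta/\bar{g}_k(\bM\bM^\top) \leq D_1^{-1}$ together with the elementary inequality $\sigma^2 d_1 \leq \delta\,(d_1/(d_2 d_3))^{1/2}$, which follows from $\delta \geq \sigma^2(d_1 d_2 d_3)^{1/2}$, obtaining a contribution of order $(d_1/(d_2 d_3))^{1/2}$. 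Adding the two rates reproduces the bound claimed in the corollary.

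Third, I would take a union bound over $j = 1, \ldots, d_1$: the total failure probability is at most $d_1 \cdot d_1^{-2} = d_1^{-1}$, and by definition the maximum of the absolute coordinate errors equals $\|\hat{\bu}_k - (1+b_k)^{1/2}\bu_k\|_{\ell_\infty}$. No genuine obstacle remains at this stage, since the entire technical burden lives inside Theorem~\ref{thm:linearform}; the route from the linear-form statement to the $\ell_\infty$-norm corollary is pure bookkeeping of the two rate simplifications above combined with a standard union bound, and the only point that requires care is to recognise that $b_k$ is index-free so that a single bias correction $(1+b_k)^{1/2}\bu_k$ works simultaneously for every coordinate.
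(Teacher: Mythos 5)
Your proposal is correct and follows exactly the route the paper intends (the paper only sketches it in one sentence: take $\bx$ over the canonical basis vectors and choose $t\gtrsim \log d_1$): the rate simplifications $\sigma\|\bM\|+\sigma^2(d_2d_3)^{1/2}=\delta(d_1,d_2,d_3)/d_1^{1/2}$ and $\sigma^2 d_1\leq \delta(d_1,d_2,d_3)\,(d_1/(d_2d_3))^{1/2}$, combined with $\bar{g}_k(\bM\bM^{\top})\geq D_1\delta(d_1,d_2,d_3)$ and a union bound over the $d_1$ coordinates with $t=2\log d_1$, give precisely the stated bound with probability $1-1/d_1$. Your observation that $b_k$ is independent of $\bx$, so a single bias correction serves all coordinates, is the right point of care and is consistent with Theorem~\ref{thm:linearform}.
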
 
If $d_1\asymp d_2\asymp d_3\asymp d$, we obtain
$$
\PP\Big(\big\|\hat{\bu}_k-(1+b_k)^{1/2}\bu_k\big\|_{\ell_{\infty}}\geq D_1\Big(\frac{\log d}{d}\Big)^{1/2}\Big)\leq \frac{1}{d}
$$
which has an analogous form to the perturbation bound in \cite{koltchinskii2016perturbation} implying a famous {\it delocalization} phenomenon in random matrix theory, see \cite{rudelson2015delocalization} and \cite{vu2015random} and references therein. 
\begin{remark}\label{rmk:l2l_infty}
Let's compare with the $\ell_2$-norm bound in \cite{zheng2015interpolating} in the case that rank $r=1$, $d_1=d_2=d_3=d$ and signal strength $\bar{g}_1(\bM\bM^{\top})=\lambda^2$. By \cite[Theorem~$1$]{zheng2015interpolating}, if $\lambda\gg \sigma d^{3/4}$, then 
\begin{align}\label{eq:rmk_hatu_l2}
\|\hat \bu_1 -\bu_1\|_{\ell_2}=O_p\Big(\frac{d^{1/2}\sigma}{\lambda}+\frac{\sigma^2d^{3/2}}{\lambda^2}\Big).
\end{align}
By Theorem~\ref{thm:linearform}, if $\lambda\gg \sigma d^{3/4}$, then we get
\begin{align}\label{eq:rmk_hatu_linf}
\big\|\hat{\bu}_1-(1+b_1)^{1/2}\bu_1\big\|_{\ell_{\infty}}=O_p\Big(\frac{\sigma \log^{1/2} d}{\lambda}+\frac{\sigma^2 d\log^{1/2}d}{\lambda^2}\Big)
\end{align}
for a constant $b_1\in[-1/2, 0]$ depending on $\bu_1$ and $\lambda$ only. By (\ref{eq:rmk_hatu_linf}) and (\ref{eq:rmk_hatu_l2}), we observe that, after subtracting the bias, the entry-wise deviation of the empirical left singular vector $\hat \bu_1$ is about $\sqrt{\frac{\log d}{d}}$ of the $\ell_2$-norm perturbation bound of $\hat\bu_1$.  It means that, after subtracting the bias, the deviations of all $\hat \bu_1$'s entries have comparable magnitudes, namely the so-called {\it delocalization} property. Interestingly, if $|u_1(j)|\gg \frac{1}{\sqrt{d}}$, then eq. (\ref{eq:rmk_hatu_linf}) implies that $\hat u_1(j)$ has the same sign as $u_1(j)$ as long as $\lambda \gg\sigma d^{3/4}$. This sign consistency is crucial for guaranteeing the exact clustering of high dimensional mixture model, see more details in Section~\ref{sec:app}. 
\end{remark}

The bias $b_k$ is usually unknown and we borrow the idea in \cite{koltchinskii2016perturbation} to estimate $b_k$ based on two independent samples. It happens in the application of tensor decomposition for gene expression data where usually multiple independent copies are available, see more details in \cite{hore2016tensor}.

Suppose that two independent noisy version of $\bA\in\RR^{d_1\times d_2\times d_3}$ are observed with $\bY^{(1)}=\bA+\bZ^{(1)}$ and $\bY^{(2)}=\bA+\bZ^{(2)}$ where $\bZ^{(1)}$ and $\bZ^{(2)}$ have i.i.d. centered Gaussian entries with variance $\sigma^2$ as in (\ref{eq:Y=A+Z}). Let $\hat{\bu}_k^{(1)}$ and $\hat{\bu}_k^{(2)}$ denote the $k$-th left singular vector of $\calM_1\big(\bY^{(1)}\big)$ and $\calM_1\big(\bY^{(2)}\big)$, respectively. The signs of $\hat{\bu}_k^{(1)}$ and $\hat{\bu}_k^{(2)}$ are chosen such that $\langle\hat{\bu}_k^{(1)}, \hat{\bu}_k^{(2)} \rangle\geq 0$. Define the estimator of $b_k$ by
$$
\hat{b}_k:=\langle\hat{\bu}_k^{(1)}, \hat{\bu}_k^{(2)}\rangle-1.
$$
Define the scaled version of empirical singular vector
$
\widetilde{\bu}_k:=\frac{\hat{\bu}_k}{(1+\hat{b}_k)^{1/2}}
$
, which is not necessarily a unit vector.
\begin{theorem}\label{thm:tildeu}
Under the assumptions in Theorem~\ref{thm:linearform}, there exists an absolute constant $D_1>0$ such that for any $\bx\in\RR^{d_1}$, the follow bound holds with probability at least $1-e^{-t}$ for all $\log 8\leq t\leq d_1$,
$$
\big|\hat{b}_k-b_k\big|\leq D_1\bigg(t^{1/2}\frac{\sigma\|\bM\|+\sigma^2(d_2d_3)^{1/2}}{\bar{g}_k(\bM\bM^{\top})}+\frac{\sigma^2d_1}{\bar{g}_k(\bM\bM^{\top})}\Big(\frac{\delta(d_1,d_2,d_3)}{\bar{g}_k(\bM\bM^{\top})}\Big)\bigg)
$$
and
$$
\big|\big<\widetilde{\bu}_k-\bu_k, \bx\big>\big|\leq D_1\bigg( t^{1/2}\frac{\sigma\|\bM\|+\sigma^2(d_2d_3)^{1/2}}{\bar{g}_k(\bM\bM^{\top})}+\frac{\sigma^2d_1}{\bar{g}_k(\bM\bM^{\top})}\Big(\frac{\delta(d_1,d_2,d_3)}{\bar{g}_k(\bM\bM^{\top})}\Big)\bigg)\|\bx\|_{\ell_2}
$$
where $\bM=\calM_1(\bA)$.
\end{theorem}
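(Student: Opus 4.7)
The plan is to reduce everything to Theorem~\ref{thm:linearform} applied (twice) to the two independent samples, exploiting independence to handle a bilinear error term. Throughout, choose the signs of $\hat{\bu}_k^{(1)}$ and $\hat{\bu}_k^{(2)}$ so that Theorem~\ref{thm:linearform} applies directly to each with the same $\bu_k$; under the assumed eigengap condition, the error bounds force $\langle \hat{\bu}_k^{(j)},\bu_k\rangle = (1+b_k)^{1/2} + o(1) > 0$ for both $j$, so automatically $\langle \hat{\bu}_k^{(1)},\hat{\bu}_k^{(2)}\rangle \ge 0$ and the choice is compatible with the sign convention in the statement.

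Define $\bfeta^{(j)} := \hat{\bu}_k^{(j)}-(1+b_k)^{1/2}\bu_k$ for $j=1,2$, and expand
\begin{equation*}
\hat{b}_k - b_k \;=\; \langle \hat{\bu}_k^{(1)},\hat{\bu}_k^{(2)}\rangle - (1+b_k) \;=\; (1+b_k)^{1/2}\bigl(\langle\bu_k,\bfeta^{(1)}\rangle+\langle\bu_k,\bfeta^{(2)}\rangle\bigr)+\langle\bfeta^{(1)},\bfeta^{(2)}\rangle.
\end{equation*}
The two linear-in-$\bfeta^{(j)}$ terms are controlled by Theorem~\ref{thm:linearform} applied with $\bx=\bu_k$ (so $\|\bx\|_{\ell_2}=1$) to each of $\bY^{(1)}$ and $\bY^{(2)}$, giving the desired bound for each.

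The main obstacle is the bilinear term $\langle \bfeta^{(1)},\bfeta^{(2)}\rangle$, and the key idea is the conditioning argument used in \cite{koltchinskii2016perturbation}. Since $\bZ^{(1)}$ and $\bZ^{(2)}$ are independent, conditional on $\bY^{(1)}$ (and hence on $\bfeta^{(1)}$), the vector $\bfeta^{(1)}$ is deterministic while $\hat{\bu}_k^{(2)}$ is the top singular vector of $\calM_1(\bY^{(2)})$. Apply Theorem~\ref{thm:linearform} conditionally with the deterministic choice $\bx=\bfeta^{(1)}$: this yields
\begin{equation*}
\bigl|\langle\bfeta^{(1)},\bfeta^{(2)}\rangle\bigr| \;\le\; D_2\Bigl(t^{1/2}\tfrac{\sigma\|\bM\|+\sigma^2(d_2d_3)^{1/2}}{\bar g_k(\bM\bM^\top)}+\tfrac{\sigma^2 d_1}{\bar g_k(\bM\bM^\top)}\tfrac{\delta(d_1,d_2,d_3)}{\bar g_k(\bM\bM^\top)}\Bigr)\,\|\bfeta^{(1)}\|_{\ell_2}.
\end{equation*}
The crude deterministic bound $\|\bfeta^{(1)}\|_{\ell_2}\le \|\hat{\bu}_k^{(1)}\|_{\ell_2}+(1+b_k)^{1/2}\le 2$ is enough (no sharper estimate is needed since the stated rate matches Theorem~\ref{thm:linearform}). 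Combining with the linear-term bounds and recalling $|b_k|\le 1/2$ so $(1+b_k)^{1/2}\le 1$ proves the first inequality of Theorem~\ref{thm:tildeu}; a union bound over the two events (each of probability $\ge 1-e^{-t}$) absorbs into the constant.

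For the second inequality, write
\begin{equation*}
\widetilde{\bu}_k-\bu_k \;=\; \frac{\hat{\bu}_k-(1+b_k)^{1/2}\bu_k}{(1+\hat b_k)^{1/2}} + \frac{(1+b_k)^{1/2}-(1+\hat b_k)^{1/2}}{(1+\hat b_k)^{1/2}}\,\bu_k.
\end{equation*}
Taking inner product with $\bx$, the first summand contributes $(1+\hat b_k)^{-1/2}\langle\bfeta,\bx\rangle$, which is handled directly by Theorem~\ref{thm:linearform}; the denominator is bounded below by a constant since the already-established bound $|\hat b_k-b_k|$ is small under the eigengap assumption and $b_k\in[-1/2,0]$. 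The second summand, after using $|\sqrt{1+b_k}-\sqrt{1+\hat b_k}|\le C|\hat b_k-b_k|$ (Taylor expansion with both arguments bounded away from $0$), is at most $C|\hat b_k-b_k|\,|\langle\bu_k,\bx\rangle|\le C|\hat b_k-b_k|\,\|\bx\|_{\ell_2}$, which is controlled by the first inequality already proven. The two contributions together yield the claimed bound.
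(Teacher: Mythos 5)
Your argument is correct and is essentially the paper's own proof: the paper simply defers to Corollary~1.5 of \cite{koltchinskii2016perturbation}, whose proof is exactly your scheme --- apply Theorem~\ref{thm:linearform} with $\bx=\bu_k$ to each sample, handle the cross term $\langle\bfeta^{(1)},\bfeta^{(2)}\rangle$ by conditioning on $\bY^{(1)}$ and applying the theorem with the (conditionally deterministic) direction $\bfeta^{(1)}$, and then do the elementary rescaling algebra for $\widetilde{\bu}_k$. The only caveat, inherited from Theorem~\ref{thm:linearform} itself (and present in the paper's statement as well), is that the bound really holds for $\log 8\leq t\leq d_1$ rather than all $t\geq 0$, with the union over the two or three events absorbed into the constant.
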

\begin{remark}
By Theorem~\ref{thm:tildeu}, if $d/2\leq \min_k d_k\leq \max_k d_k\leq 2d$, we get
$$
\PP\Big(\|\widetilde{\bu}_k-\bu_k\|_{\ell_\infty}\geq D_1\Big(\frac{\log d}{d}\Big)^{1/2}\Big)\leq \frac{1}{d}.
$$
\end{remark}

\subsection{Low Rank Tensor Denoising and Entry-wise Deviation Bound}
\label{subsec:denoising}
In this section, we study a low rank estimate of $\bA$ through the projection of $\bY$. Let $\widetilde{\bU}=(\tilde{\bu}_1,\ldots,\tilde{\bu}_{r_1})\in \RR^{d_1\times r_1}$ be scaled singular vectors each of which is computed as in Theorem~\ref{thm:tildeu}.
Similarly, let $\widetilde{\bV}\in\RR^{d_2\times r_2}$ and $\widetilde{\bW}\in\RR^{d_3\times r_3}$ be the corresponding scaled singular vectors computed from $\calM_2(\bY)$ and $\calM_3(\bY)$. Define the low rank estimate
$$
\widetilde{\bA}:=\bY\times_1 \bP_{\widetilde{\bU}}\times_2 \bP_{\widetilde{\bV}}\times_3 \bP_{\widetilde{\bW}}
$$
where $\bP_{\widetilde{\bU}}$ represents the scaled projector $\bP_{\widetilde{\bU}}:=\widetilde{\bU}\widetilde{\bU}^{\top}$. Clearly, $\rank(\widetilde{\bA})=(r_1,r_2,r_3)$ which serves as a low rank estimate of $\bA$. We characterize the entry-wise accuracy of $\widetilde{\bA}$, namely, the upper bound of $\|\widetilde{\bA}-\bA\|_{\ell_\infty}$ in terms of the coherence of $\bU, \bV$ and $\bW$.
Our $\|\widetilde{\bA}-\bA\|_{\ell_{\infty}}$ bound relies on the simultaneous $\ell_{\infty}$-norm perturbation bounds of $\{\tilde{\bu}_{k_1}\}_{k_1=1}^{r_1}$, $\{\tilde{\bv}_{k_2}\}_{k_2=1}^{r_2}$ and  $\{\tilde{\bw}_{k_3}\}_{k_3=1}^{r_3}$. We impose the following conditions on the eigengaps: for a large enough constant $D_1>0$,
\begin{equation}\label{eq:denoisegk1}
\bar{g}_{k_1}\big(\calM_1(\bA)\calM_1^{\top}(\bA)\big)\geq D_1\Big(\sigma d_1^{1/2}\overline{\Lambda}(\bA)+\sigma^2(d_1d_2d_3)^{1/2}\Big),\quad 1\leq k_1\leq r_1,
\end{equation}
\begin{equation}\label{eq:denoisegk2}
\bar{g}_{k_2}\big(\calM_2(\bA)\calM_2^{\top}(\bA)\big)\geq D_1\Big(\sigma d_2^{1/2}\overline{\Lambda}(\bA)+\sigma^2(d_1d_2d_3)^{1/2}\Big),\quad 1\leq k_2\leq r_2,
\end{equation}
\begin{equation}\label{eq:denoisegk3}
\bar{g}_{k_3}\big(\calM_3(\bA)\calM_3^{\top}(\bA)\big)\geq D_1\Big(\sigma d_3^{1/2}\overline{\Lambda}(\bA)+\sigma^2(d_1d_2d_3)^{1/2}\Big),\quad 1\leq k_3\leq r_3,
\end{equation}
where we denote by
$$
\overline{\Lambda}(\bA):=\max\big\{\lambda_{\max}\big(\calM_1(\bA)\big), \lambda_{\max}\big(\calM_2(\bA)\big), \lambda_{\max}\big(\calM_3(\bA)\big) \big\}.
$$
Similarly, we define
$$
\underline{\Lambda}(\bA):=\min\Big\{\lambda_{\min}\big(\calM_1(\bA)\big),\ \lambda_{\min}\big(\calM_2(\bA)\big),\ \lambda_{\min}\big(\calM_3(\bA)\big)\Big\}
$$
and the overall eigengap
\begin{eqnarray*}
\bar{g}_{\min}\big(\bA\big):=\min\bigg\{\bar{g}_{k_1}^{1/2}\big(\calM_1(\bA)\calM_1^{\top}(\bA)\big), \bar{g}_{k_2}^{1/2}\big(\calM_2(\bA)\calM_2^{\top}(\bA)\big), \bar{g}_{k_3}^{1/2}\big(\calM_3(\bA)\calM_3^{\top}(\bA)\big)\\
,1\leq k_1\leq r_1, 1\leq k_2\leq r_2, 1\leq k_3\leq r_3
\bigg\}.
\end{eqnarray*}
By definition, it is clear that $\underline{\Lambda}(\bA)\geq \bar{g}_{\min}(\bA)$.


\begin{theorem}\label{thm:linfty}
Suppose conditions (\ref{eq:denoisegk1}) (\ref{eq:denoisegk2}) (\ref{eq:denoisegk3}) hold and assume that for all $i\in[d_1], j\in[d_2], k\in[d_3]$,
$$
\|\bU^{\top}\be_i\|_{\ell_2}\leq \mu_\bU\sqrt{\frac{r_1}{d_1}},\quad \|\bV^{\top}\be_j\|_{\ell_2}\leq \mu_\bV\sqrt{\frac{r_2}{d_2}},\quad \|\bW^{\top}\be_k\|_{\ell_2}\leq \mu_\bW\sqrt{\frac{r_3}{d_3}}
$$
for some constants $\mu_\bU, \mu_\bV,\mu_\bW\geq 0$.
Suppose that $\frac{d}{2}\leq \min_{1\leq k\leq 3} d_k\leq \max_{1\leq k\leq 3} d_k\leq 2d$ and $\frac{r}{2}\leq \min_{1\leq k\leq 3} r_k\leq \max_{1\leq k\leq 3} r_k\leq 2r$.
Then, there exists an absolute constant $D_2>0$ such that, with probability at least $1-\frac{1}{d}$,
\begin{align*}
&\big\|\tilde{\bA}-\bA\big\|_{\ell_\infty}\\
&\leq D_2
\sigma r^3\bigg(\frac{\widetilde{\kappa}(\bA)\sigma}{\bar{g}_{\min}(\bA)}+\frac{\widetilde{\kappa}^2(\bA)}{d}\bigg)\big(\mu_\bU\mu_\bV+\mu_\bU\mu_\bW+\mu_\bV\mu_\bW\big)\log^{3/2}d
\end{align*}
where $\widetilde{\kappa}(\bA)=\overline{\Lambda}(\bA)/\bar{g}_{\min}(\bA)$.
\end{theorem}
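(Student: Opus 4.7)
The strategy is to reduce each entrywise error of $\widetilde{\bA}-\bA$ to a sum of linear-form perturbations of type $\langle\widetilde{\bu}_l-\bu_l,\bx\rangle$ (and analogues for the other modes) on carefully chosen test vectors $\bx$, so that Theorem~\ref{thm:tildeu} applied with $t\asymp\log d$ together with a union bound delivers the desired deviation.

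Using the identity $\bA=\bA\times_1\bP_\bU\times_2\bP_\bV\times_3\bP_\bW$, telescope between the empirical and true projectors:
\begin{align*}
\widetilde{\bA}-\bA=\ &\bA\times_1(\bP_{\widetilde{\bU}}-\bP_\bU)\times_2\bP_{\widetilde{\bV}}\times_3\bP_{\widetilde{\bW}}+\bA\times_2(\bP_{\widetilde{\bV}}-\bP_\bV)\times_3\bP_{\widetilde{\bW}}\\
&+\bA\times_3(\bP_{\widetilde{\bW}}-\bP_\bW)+\bZ\times_1\bP_{\widetilde{\bU}}\times_2\bP_{\widetilde{\bV}}\times_3\bP_{\widetilde{\bW}}\ =:\ T_1+T_2+T_3+T_4.
\end{align*}
Term $T_3$ is representative of the signal contributions: using $\widetilde{\bw}_l\widetilde{\bw}_l^\top-\bw_l\bw_l^\top=\widetilde{\bw}_l(\widetilde{\bw}_l-\bw_l)^\top+(\widetilde{\bw}_l-\bw_l)\bw_l^\top$ and setting $\ba_{ij}:=A(i,j,\cdot)\in\RR^{d_3}$,
\[
T_3(i,j,k)=\sum_{l=1}^{r_3}\Bigl[\langle\widetilde{\bw}_l-\bw_l,\be_k\rangle\langle\widetilde{\bw}_l,\ba_{ij}\rangle+\langle\bw_l,\be_k\rangle\langle\widetilde{\bw}_l-\bw_l,\ba_{ij}\rangle\Bigr].
\]
From the HOSVD $\bA=\bC\cdot(\bU,\bV,\bW)$ combined with the incoherence hypotheses one obtains $\|\ba_{ij}\|_{\ell_2}\lesssim\overline{\Lambda}(\bA)\mu_\bU\mu_\bV\cdot r/d$. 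Applying Theorem~\ref{thm:tildeu} twice (once with $\bx=\be_k$ and union-bounded over $k\in[d_3]$, once with $\bx=\ba_{ij}$) and bounding $|\langle\bw_l,\be_k\rangle|\leq\mu_\bW\sqrt{r/d}$ and $|\langle\widetilde{\bw}_l,\ba_{ij}\rangle|\lesssim\|\ba_{ij}\|_{\ell_2}$, then summing over $l\in[r_3]$, delivers the $T_3$ contribution carrying the prefactor $\mu_\bU\mu_\bV$. Terms $T_1,T_2$ are handled identically with mode roles permuted, producing the remaining summands $\mu_\bV\mu_\bW$ and $\mu_\bU\mu_\bW$.

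The noise term $T_4$ is expanded via $\bP_{\widetilde{\bU}}=\bP_\bU+\bDelta_\bU$ (and likewise for $V,W$). The leading piece $\bZ\cdot(\bP_\bU\be_i,\bP_\bV\be_j,\bP_\bW\be_k)$ is centered Gaussian with variance at most $\sigma^2\mu_\bU^2\mu_\bV^2\mu_\bW^2(r/d)^3$, hence negligible after a union bound over $d^3$ entries. The cross terms couple $\bZ$ with $\bDelta_\bU\be_i=\sum_l[\widetilde{\bu}_l\langle\widetilde{\bu}_l-\bu_l,\be_i\rangle+(\widetilde{\bu}_l-\bu_l)u_l(i)]$ and its analogues; each is a chaos variable whose amplitude is controlled by combining Theorem~\ref{thm:tildeu} with the coherence bound $|u_l(i)|\leq\mu_\bU\sqrt{r/d}$ and Gaussian tail inequalities. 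Substituting $\bar g_k(\calM_i(\bA)\calM_i^\top(\bA))\geq\bar g_{\min}(\bA)^2$ and $\overline{\Lambda}(\bA)=\widetilde{\kappa}(\bA)\bar g_{\min}(\bA)$ in the Theorem~\ref{thm:tildeu} rate, summing over the at most $r$ summands in each telescoping term, and performing a union bound over $(i,j,k)\in[d]^3$ (each invocation of Theorem~\ref{thm:tildeu} contributing a $\sqrt{\log d}$) recovers the stated rate with the $\log^{3/2}d$ factor.

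The principal obstacle is the cross-term analysis in $T_4$: the perturbation $\bDelta_\bU$ depends on the full noise tensor $\bZ$, so one cannot simply condition on $\widetilde{\bU}$ and treat $\bZ$ as Gaussian. This is resolved either by sample splitting (exploiting the two-copy construction already embedded in the definition of $\widetilde{\bU}$ from Theorem~\ref{thm:tildeu}) or by a linearisation argument that decomposes $\bDelta_\bU$ into a piece essentially independent of the relevant fibres of $\bZ$ plus a higher-order remainder controlled by the second-order spectral expansion from Section~3.1; it is also here that one must verify that the seemingly stronger incoherence factor $\mu_\bU\mu_\bV\mu_\bW$ appearing in the leading Gaussian piece is dominated by the pairwise factors $\mu_\bU\mu_\bV+\mu_\bU\mu_\bW+\mu_\bV\mu_\bW$ that govern the signal terms $T_1,T_2,T_3$.
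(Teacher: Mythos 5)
Your overall skeleton matches the paper's proof: split $\widetilde{\bA}-\bA$ into a signal part $\bA\cdot(\bP_{\widetilde{\bU}},\bP_{\widetilde{\bV}},\bP_{\widetilde{\bW}})-\bA$ (telescoped over the three modes) and a noise part $\bZ\cdot(\bP_{\widetilde{\bU}},\bP_{\widetilde{\bV}},\bP_{\widetilde{\bW}})$, bound each entry by linear forms of $\widetilde{\bu}_l-\bu_l$ (and the other modes) via Theorem~\ref{thm:tildeu} with $t\asymp\log d$ plus union bounds, and use incoherence to control fiber norms such as $\|\ba_{ij}\|_{\ell_2}\lesssim\overline{\Lambda}(\bA)\mu_\bU\mu_\bV\,r/d$. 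Your treatment of the signal terms $T_1,T_2,T_3$ is correct in outline and equivalent to the paper's seven-term expansion, and your closing check that the triple-coherence factor is dominated by the pairwise ones (via $\mu_\bW\sqrt{r/d}\lesssim 1$, $\max\mu\lesssim\sqrt d$) is indeed needed and is exactly how the paper finishes.

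The genuine gap is the noise cross-term analysis in $T_4$, which you leave as an acknowledged obstacle with two suggested but unexecuted fixes. The sample-splitting suggestion does not work as stated: the two independent copies in Theorem~\ref{thm:tildeu} are used only to form the scalar $\hat b_k$; the vectors $\hat{\bu}_k$ (hence $\bP_{\widetilde{\bU}}-\bP_\bU$) are still functions of the same $\bZ$ appearing in $T_4$, so no independence is gained. The linearisation route is left vague. In fact no decoupling, conditioning, or chaos argument is needed: the paper bounds each cross term \emph{deterministically} by Cauchy--Schwarz,
\begin{align*}
\big|\big<\bZ\cdot(\bP_{\widetilde{\bU}}-\bP_\bU,\bP_\bV,\bP_\bW),\be_i\otimes\be_j\otimes\be_k\big>\big|
\leq \big\|(\bP_{\widetilde{\bU}}-\bP_\bU)\be_i\big\|_{\ell_2}\,\big\|\calM_1(\bZ)(\bV\otimes\bW)\big\|\,\|\bV^{\top}\be_j\|_{\ell_2}\|\bW^{\top}\be_k\|_{\ell_2},
\end{align*}
and then intersects two high-probability events: one from Theorem~\ref{thm:tildeu} controlling the row norm $\|(\bP_{\widetilde{\bU}}-\bP_\bU)\be_i\|_{\ell_2}$, and one from Lemma~\ref{lemma:gammabound} giving $\|\calM_1(\bZ)(\bV\otimes\bW)\|\lesssim\sigma d^{1/2}$, since $\calM_1(\bZ)(\bV\otimes\bW)$ is effectively a $d_1\times O(r^2)$ Gaussian matrix. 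Because every factor is bounded in norm on an event intersection, the dependence between $\widetilde{\bU}$ and $\bZ$ is irrelevant; the only place where Gaussianity of an inner product is invoked is the leading term $\bZ\cdot(\bP_\bU,\bP_\bV,\bP_\bW)$, whose projectors are deterministic. Replacing your "chaos variable" discussion with this norm-product argument (applied likewise to the double and triple difference terms) closes the gap and recovers the stated bound.
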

\begin{remark}
To highlight the contribution of Theorem~\ref{thm:linfty}, let $r=O(1)$ and $\widetilde{\kappa}(\bA)=O(1)$.
Note that if the coherence constants $\mu_\bU,\mu_\bV,\mu_\bW=d^{(\frac{3}{4}-\varepsilon)/2}$ for $\varepsilon\in(0,3/4)$, i.e., $\bU,\bV,\bW$ can be almost spiked, under the minimal eigengap $\bar{g}_{\min}(\bA)\gg \sigma d^{3/4}$, we obtain
$$
\|\tilde\bA-\bA\|_{\ell_\infty}=O_p\Big(\frac{\sigma}{d^{\varepsilon}}\log^{3/2}d\Big).
$$
It worths to point out that the minimax optimal bound of estimating $\bA$ in $\ell_2$-norm is $O\big(\sigma d^{1/2}\big)$, see \cite{zhang2017tensor}.
Theorem~\ref{thm:linfty} is more interesting when $\bA$ is incoherent such that $\mu_\bU, \mu_\bV, \mu_\bW= O(1)$ where we can conclude that 
\begin{align}\label{eq:rmk_tildeA_linf}
\|\tilde{\bA}-\bA\|_{\ell_\infty}=O_p\bigg(\Big(\frac{\sigma^2}{\bar{g}_{\min}(\bA)}+\frac{\sigma}{d}\Big)\log^{3/2} d\bigg)=O_p\Big(\frac{\sigma}{d^{3/4}}\log^{3/2}d\Big).
\end{align}
By (\ref{eq:rmk_tildeA_linf}), if the entry $|A(j_1,j_2,j_3)|\gg \frac{\sigma\log^{3/2}d}{d^{3/4}}$, then the entry $\tilde A(j_1,j_2,j_3)$ maintains the same sign as $A(j_1,j_2,j_3)$. In Section~\ref{sec:app} and Remark~\ref{rmk:subtensor_loc}, we show that the sup-norm bound of $\tilde\bA-\bA$ is useful for the exact support recovery of sub-tensor localizations, under minimal signal strength requirements (that is the support size). 
\end{remark}

\section{Applications}\label{sec:app}
{
In this section, we review two applications of $\ell_\infty$-norm perturbation bound. In these applications, we note that it is unnecessary to estimate the bias $b_k$. We show that the sup-norm perturbation bounds reveal unconventional phase transitions in these statistical learning applications. Meanwhile, novel
yet simple statistical algorithms can be designed based on the sup-norm perturbation bounds. }
\subsection{High Dimensional Clustering}
Many statistical and machine learning tasks are associated with clustering high dimensional data, see \cite{mccallum2000efficient}, \cite{parsons2004subspace}, \cite{fan2008high}, \cite{hastie2009unsupervised}, \cite{friedman1989regularized} and references therein. We consider a two-class Gaussian mixture model such that each data point $\by_i\in \RR^{p}$ can be represented by
\begin{align}\label{eq:cluster_model}
\by_i=-\ell_i\bbeta+(1-\ell_i)\bbeta+\bepsilon_i\in\RR^{p}
\end{align}
where the associated label $\ell_i\in\{0,1\}$ for $i=1,2,\ldots,n$ is unknown and the noise vector $\bepsilon_i\sim\calN({\bf 0}, \bI_p)$. The vector $\bbeta\in\RR^{p}$ is unknown with $p\gg n$. We denote the true clusters by 
$$
\calN_0:=\{1\leq i\leq n: \ell_i=0\}\quad {\rm and}\quad \calN_1:=\{1\leq i\leq n: \ell_i=1\}.
$$

Given the data matrix
$$
\bY=\big(\by_1,\ldots,\by_n\big)^{\top}\in \RR^{n\times p},
$$
our goal is  bi-clustering the $n$ data points. Let $n_{k+1}:={\rm Card}\big(\calN_k\big)$ for $k=0,1$ such that $n_1+n_2=n$. Observe that $\EE\bY$ has rank $1$ and its leading left singular vector $\bu\in\RR^n$ with
$$
u(i)=\frac{1-\ell_i}{n^{1/2}}-\frac{\ell_i}{n^{1/2}},\quad 1\leq i\leq n.
$$
The signs of $\bu$ immediately suggest the cluster memberships of each data points. Moreover, the leading singular value of $\EE\bY$ is $n^{1/2}\|\bbeta\|_{\ell_2}$. Let $\hat{\bu}$ denote the leading left singular vector of $\bY$. By Corollary~\ref{cor:linfty}, if $\|\bbeta\|_{\ell_2}\geq D_1\big(1\vee(p/n)^{1/4}\big)$ such that $|(1+b_k)^{-1/2}-1|\leq 1/2$, then
$$
\PP\bigg(\big\|\hat{\bu}-(1+b_k)^{1/2}\bu\big\|_{\ell_\infty}\leq D_2\Big(\frac{1}{\|\bbeta\|_{\ell_2}}+\frac{(p/n)^{1/2}}{\|\bbeta\|_{\ell_2}^2}\Big)\Big(\frac{1}{\|\bbeta\|^2_{\ell_2}}+\sqrt{\frac{\log n}{n}}\Big) \bigg)\geq 1-\frac{1}{n}.
$$
On this event, if $\|\bbeta\|_{\ell_2}\geq D_1\Big(n^{1/6}\vee p^{1/8}\vee \big(p\log(n)/n\big)^{1/4}\Big)$
\begin{gather}
\|\hat{\bu}-\bu\|_{\ell_\infty}\leq \|\hat{\bu}-(1+b_k)^{1/2}\bu\|_{\ell_{\infty}}+\big|(1+b_k)^{-1/2}-1\big|\|\bu\|_{\ell_\infty}\nonumber\\
\leq \|\hat{\bu}-(1+b_k)^{1/2}\bu\|_{\ell_{\infty}}+\frac{1}{2n^{1/2}}\leq \frac{3}{4n^{1/2}}\label{eq:hatu-u_linf}
\end{gather}
implying that if $\ell_i=\ell_j$, then ${\rm sign}\big(\hat{u}(i)\big)={\rm sign}\big(\hat{u}(j)\big)$ for all $1\leq i, j\leq n$. Therefore, we propose a simple clustering algorithm by entry-wise signs of $\hat\bu$ in Algorithm~\ref{alg:clustering}.

\begin{algorithm}
\caption{{High dimensional bi-clustering by entry-wise signs.}}
\label{alg:clustering}
\begin{algorithmic}[2]
\STATE {\bf Input:} Data matrix $\bY\in\RR^{n\times p}$
\STATE Calculate the leading left singular vector of $\bY$, denoted by $\hat \bu\in\RR^n$
\STATE Initiate $\hat\calN_0=\{\}$ and $\hat\calN_1=\{\}$
\FOR{$i=1,\cdots,n$}
 \IF{$\hat u(i)\geq 0$}
\STATE $\hat\calN_0\rightarrow \hat\calN_0\cup\{i\}$
\ELSE
\STATE $\hat\calN_1\rightarrow \hat\calN_1\cup \{i\}$
\ENDIF
\ENDFOR
\STATE {\bf Output:} $\hat\calN_0$ and $\hat\calN_1$.
\end{algorithmic}
\end{algorithm}
By the bound (\ref{eq:hatu-u_linf}), Algorithm~\ref{alg:clustering} can guarantee exact clustering as follows. 
\begin{theorem}\label{thm:exact_clustering}
Suppose model (\ref{eq:cluster_model}) holds with noise vector $\beps\sim\calN(0,\bI_p)$.  Let $\hat\calN_0$ and $\hat\calN_1$ be the output of Algorithm~\ref{alg:clustering}. There exists an absolute constant $D_1>0$ such that if $\|\bbeta\|_{\ell_2}\geq D_1\Big(n^{1/6}\vee p^{1/8}\vee \big(p\log(n)/n\big)^{1/4}\Big)$, then with probability at least $1-\frac{1}{n}$,
$$
\hat\calN_0=\calN_0 \quad {\rm or }\quad  \hat\calN_0=\calN_1. 
$$
\end{theorem}
The proof of Theorem~\ref{thm:exact_clustering} is straightforward based on eq. (\ref{eq:hatu-u_linf}). We note that eq. (\ref{eq:hatu-u_linf})
 also implies that it is unnecessary to estimate $b_k$ in this application, since scaling switch the entry-wise signs simultaneously and thus maintains the clustering outputs. 
 
 \begin{remark}
 Theorem~\ref{thm:exact_clustering} reveal unconventional phase transition thresholds for the exact clustering of Gaussian mixture model (\ref{eq:cluster_model}). Indeed, by Theorem~\ref{thm:exact_clustering}, the sup-norm based clustering algorithm (Algorithm~\ref{alg:clustering}) will exactly recover the memberships with high probability when the signal strength satisfies 
 $$
 \|\bbeta\|_{\ell_2}\gg \Big(n^{1/6}\vee p^{1/8}\vee \big(p\log(n)/n\big)^{1/4}\Big).
 $$
In comparison, the $\ell_2$-norm based clustering algorithm in \cite{cai2016rate} and \cite{zheng2015interpolating} requires
$$
\|\bbeta\|_{\ell_2}\gg \big(n^{1/2}\vee p^{1/4}\big)
$$
for exact clustering. Clearly, with respect to exact recovery, the sup-norm based clustering algorithm requires much weaker SNR conditions. 
\end{remark}

\begin{remark}
The above framework can be directly generalized to Gaussian mixture model with $k$-clusters. Suppose that the $j$-th cluster has mean vector $\bbeta_j$ and size $n_j$, then without loss of generality, the data matrix $\bY=\bM+\bZ$
$$
\bM=\big(\underbrace{\bbeta_1,\cdots,\bbeta_1}_{n_1},\cdots,\underbrace{\bbeta_j,\cdots,\bbeta_j}_{n_j},\cdots,\underbrace{\bbeta_k,\cdots,\bbeta_k}_{n_k}\big)^{\top}\in \RR^{N\times p}
$$
with $N=\sum_{j=1}^k n_j$ and $\bZ\in\RR^{N\times p}$ having i.i.d. standard Gaussian entries. Observe that $\rank(\bM)\leq k$, it suffices to consider the top-$k$ left singular vectors of $\bM$. However, it requires nontrivial effort to investigate the eigengaps of $\bM$ without further assumptions on $\{\bbeta_j\}_{j=1}^k$. In the case that $n_j= n$ and $\bbeta_1,\ldots,\bbeta_k$ are mutually orthogonal such that $\|\bbeta_1\|_{\ell_2}\geq\ldots\geq\|\bbeta_{k}\|_{\ell_2}$, then $\bM$'s top-$k$ singular values are $\lambda_j=\sqrt{n_j}\|\bbeta_j\|_{\ell_2}, 1\leq j\leq k$. Clearly, the non-zero entries of $\bM$'s top-$k$ left singular vectors provide the cluster membership of each data points. By Theorem~\ref{thm:linearform}, if $\Delta_{j}\geq C_1\sqrt{k}\|\bbeta_1\|_{\ell_2}+C_2(kp/n)^{1/2}$ where $\Delta_j=\min\{\big(\|\bbeta_j\|_{\ell_2}^2-\|\bbeta_{j+1}\|_{\ell_2}^2\big),\big(\|\bbeta_{j-1}\|_{\ell_2}^2-\|\bbeta_{j}\|_{\ell_2}^2\big)\}$, then
$$
\|\hat\bu_j-\sqrt{1+b_j}\bu_j\|_{\ell_\infty}=O_p\bigg(\Big(\frac{\|\bbeta_1\|_{\ell_2}}{\Delta_j}+\frac{(p/n)^{1/2}}{\Delta_j}\Big)\Big(\frac{k^{3/2}}{\Delta_j}+\sqrt{\frac{k\log n}{n}}\Big)\bigg)
$$
for all $1\leq j\leq k$.
\end{remark}

\subsection{Sub-tensor Localization}
In gene expression association analysis (see \cite{hore2016tensor}, \cite{xiong2012integrating}, \cite{kolar2011minimax} and \cite{ben2003discovering}) and planted clique detection (see \cite{brubaker2009random}, \cite{anandkumar2013tensor} and \cite{gauvin2014detecting}), the goal is equivalent to locating a sub-tensor whose entries are statistically more significant than the others. One simple model characterizing this type of tensor data is as
\begin{equation}\label{eq:subtensor_model}
\bY=\lambda {\bf 1}_{C_1}\otimes {\bf 1}_{C_2}\otimes {\bf 1}_{C_3}+\bZ\in\RR^{d_1\times d_2\times d_3}
\end{equation}
with $C_k=\cup_{j=1}^{s_k}C_k^{(j)}\subset [d_k]$ where $\big\{C_k^{(1)},\ldots,C_k^{(s_k)}\big\}$ are disjoint subsets of $[d_k]$
 for $k=1,2,3$, i.e., there are $s_k\geq 1$ dense blocks in the $k$-th direction. Then, in total, there are $s_1s_2s_3$ dense blocks in $\EE \bY$.
The vector ${\bf 1}_{C_k}\in \RR^{d_k}$ is a zero-or-one vector whose entry equals $1$ only when the index belongs to $\bC_k$. The noise tensor $\bZ$ has i.i.d. entries such that $Z(i,j,k)\sim\calN(0,1)$. Given the noisy observation $\bY$, the goal is to locate the unknown subsets $\{C_1^{(j)}\}_{j=1}^{s_1}, \{C_2^{(j)}\}_{j=1}^{s_2}$ and $\{C_3^{(j)}\}_{j=1}^{s_3}$. The appealing scenario is $\lambda=O(1)$, since otherwise the signal is so strong that the problem can be easily solved by just looking at each entry. The tensor $\EE\bY$ has rank $1$ with leading singular value $\lambda |C_1|^{1/2}|C_2|^{1/2}|C_3|^{1/2}$ and corresponding singular vectors
$$
\bu=\frac{1}{|C_1|^{1/2}}{\bf 1}_{C_1},\quad \bv=\frac{1}{|C_2|^{1/2}}{\bf 1}_{C_2}\quad {\rm and}\quad \bw=\frac{1}{|C_3|^{1/2}}{\bf 1}_{C_3},
$$
where $|C|$ denotes the cardinality of $C$.
By Theorem~\ref{thm:linearform}, if $\lambda\geq D_1\frac{(d_1d_2d_3)^{1/4}}{|C_1|^{1/2}|C_2|^{1/2}|C_3|^{1/2}}$ for a large enough constant $D_1>0$ and $d_{\max}\leq (d_1d_2d_3)^{1/2}$ where $d_{\max}:=(d_1\vee d_2\vee d_3)$, then with probability at least $1-\frac{1}{d_{\max}}$,  we obtain
\begin{align}
\|\hat{\bu}&-(1+b_1)^{1/2}\bu\|_{\ell_{\infty}}\nonumber\\
&\leq \frac{D_1\log^{1/2}d_{\max}}{\lambda|C_1|^{1/2}|C_2|^{1/2}|C_3|^{1/2}}+\frac{D_1(d_2d_3\log d_{\max})^{1/2}}{\lambda^2|C_1||C_2||C_3|}+\frac{D_1d_1}{\lambda^2|C_1||C_2||C_3|}\bigg(\frac{(d_1d_2d_3)^{1/2}}{\lambda^2|C_1||C_2||C_3|}\bigg),\label{eq:subtensor_hatu}
\end{align}
where $b_1\in[-0.5,0]$ is a constant depending on $\bu, \bv,\bw$ and $\lambda$ only. Similar bounds can be also derived for $\hat \bv$ and $\hat \bw$. 
By eq. (\ref{eq:subtensor_hatu}), we propose a simple algorithm (Algorithm~\ref{alg:subtensor}) for the support recovery of sub-tensor model (\ref{eq:subtensor_model}). 
\begin{algorithm}
\caption{{Sub-tensor localizations by entry-wise magnitudes.}}
\label{alg:subtensor}
\begin{algorithmic}[2]
\STATE {\bf Input:} Data matrix $\bY\in\RR^{d_1\times d_2\times d_3}$
\STATE Calculate the leading left singular vectors of $\{\calM_k(\bY)\}_{k=1}^3$, denoted by $\hat \bu\in\RR^{d_1}, \hat \bv\in\RR^{d_2}$ and $\hat\bw\in\RR^{d_3}$, respectively. 
\STATE Take entry-wise magnitudes $\{|\hat u(j_1)|\}_{j_1=1}^{d_1}$ and arrange them in a non-increasing order,
\STATE Record the top-$|C_1|$ locations and denote them by $\hat C_1$;
\STATE Take entry-wise magnitudes $\{|\hat v(j_2)|\}_{j_2=1}^{d_2}$ and arrange them in a non-increasing order,
\STATE Record the top-$|C_2|$ locations and denote them by $\hat C_2$;
\STATE Take entry-wise magnitudes $\{|\hat w(j_3)|\}_{j_3=1}^{d_3}$ and arrange them in a non-increasing order,
\STATE Record the top-$|C_3|$ locations and denote them by $\hat C_3$;
\STATE {\bf Output:} $\hat C_1$, $\hat C_2$ and $\hat C_3$.
\end{algorithmic}
\end{algorithm}
By bound (\ref{eq:subtensor_hatu}), we can immediately guarantee the exact support recovery by Algorithm~\ref{alg:subtensor}. The proof is straightforward and is omitted here. 
\begin{theorem}\label{thm:subtensor}
Suppose model (\ref{eq:subtensor_model}) holds and $(d_1+d_2+d_3)\leq 2(d_1d_2d_3)^{1/2}$. There exist absolute constants $D_1, D_2>0$ such that if $\lambda\geq D_1\frac{(d_1d_2d_3)^{1/4}}{(|C_1||C_2||C_3|)^{1/2}}$ and 
$$
\max\bigg\{\sqrt{\frac{|C_1|}{d_1}}, \sqrt{\frac{|C_2|}{d_2}}, \sqrt{\frac{|C_3|}{d_3}} \bigg\}\cdot \frac{(d_1d_2d_3\log d_{\max})^{1/2}}{\lambda^2|C_1||C_2||C_3|}\leq \frac{1}{D_2},
$$
then, with probability at least $1-\frac{1}{d_1+d_2+d_3}$, we get
$$
\hat C_1=C_1\quad {\rm and}\quad \hat C_2=C_2\quad {\rm and}\quad \hat C_3=C_3
$$
where $\{\hat C_k\}_{k=1}^3$ are the output of Algorithm~\ref{alg:subtensor}. 
\end{theorem}
Note that in Algorithm~\ref{alg:subtensor} and Theorem~\ref{thm:subtensor}, it is also unnecessary to estimate the bias $b_1$ because we are interested in the top-$|C_1|$ largest entries of $|\hat{\bu}|$ and scaling does not affect the ordering of the entry-wise magnitudes. 

\begin{remark}\label{rmk:subtensor}
The phase transition of Algorithm~\ref{alg:subtensor} and model (\ref{eq:subtensor_model}) is intriguing. Note that the support localizations are trivial when $\lambda\gg 1$. Therefore, we only focus on the case $\lambda=1$. 
 Now, let $|C_1|\asymp |C_2|\asymp |C_3|=K$ and $d_1\asymp d_2\asymp d_3=d$. By Theorem~\ref{thm:subtensor}, we conclude that Algorithm~\ref{alg:subtensor} can exactly recover the supports $C_1,C_2,C_3$ with high probability if the support size $K\gg d^{\frac{1}{2}}$. Meanwhile, by the lower bound arguments in \cite{zhang2017tensor}, we know that if $K\ll d^{\frac{1}{2}}$, then there exist no polynomial time algorithms which can recover $C_1$ consistently. Put it differently, phase transition occurs at the threshold $O(d^{\frac{1}{2}})$ such that if $K\ll d^{\frac{1}{2}}$, the problem is unsolvable by polynomial time algorithms; if $K\gg d^{\frac{1}{2}}$, the problem can be perfectly solved by Algorithm~\ref{alg:subtensor}. 
 In comparison, the $\ell_2$-norm based algorithms can only guarantee the consistency of support recovery when $K\gg d^{\frac{1}{2}}$, rather than the exact recovery.  
\end{remark}

\begin{remark}\label{rmk:subtensor_loc}
We could also investigate the entry-wise denoising of model (\ref{eq:subtensor_model}). Suppose that $|C_1|\asymp |C_2|\asymp |C_3|=K$ and $d_1\asymp d_2\asymp d_3=d$. 
We denote by $\bA= {\bf 1}_{C_1}\otimes {\bf 1}_{C_2}\otimes {\bf 1}_{C_3}$ where we fix $\lambda=1$ and we focus only on the support sizes $\{|C_k|\}_{k=1}^3$. Let $\hat \bu, \hat\bv $ and $\hat\bw$ be the empirical singular vectors as in Algorithm~\ref{alg:subtensor}. Define the projection estimator
$$
\hat \bA = \bY\times_1 (\hat \bu \hat\bu^{\top})\times_2(\hat\bv \hat\bv^{\top})\times_3 (\hat\bw\hat\bw^{\top}).
$$
Similarly as in Theorem~\ref{thm:linfty}, we can show that there exists a constant $b\in[\sqrt{2}/4,1]$ such that with probability at least $1-\frac{1}{d}$, 
\begin{equation}\label{eq:subtensor_inf_1}
\|\hat\bA-b\cdot \bA \|_{\ell_\infty}\leq D_1\cdot \Big(\frac{1}{K}+\frac{d}{K^{5/2}}\Big)\log^{3/2}d
\end{equation}
for some absolute constant $D_1>0$. 
Recall from model (\ref{eq:subtensor_model}) that $A(j_1,j_2,j_3)=1$ if $(j_1,j_2,j_3)\in C_1\times C_2\times C_3$. From eq. (\ref{eq:subtensor_inf_1}), we conclude that if $K\geq D_2\big(\sqrt{d}+d^{0.4}\log^{0.6}d\big)$ for a large enough absolute constant $D_2>0$ (note that the threshold $\sqrt{d}$ comes from SNR requirement as in eq. (\ref{eq:subtensor_hatu})), then 
$$
\big| \hat A(j_1,j_2,j_3)\big|>|\hat A(j_1',j_2',j_3')|
$$
for all $(j_1,j_2,j_3)\in C_1\times C_2\times C_3$ and $(j_1', j_2', j_3')\notin C_1\times C_2\times C_3$. As a result, we can choose the locations of $\hat \bA$'s entries with the largest-$|C_1||C_2||C_3|$ magnitudes and recover $\bA$'s supports exactly. 
\end{remark}

\subsection{Numerical Experiments}\label{sec:numeric}
We present simulation results of experiments for the applications in Section~\ref{sec:app}. For high dimensional clustering in model (\ref{eq:cluster_model}), we randomly sample a vector $\bbeta\in\RR^{p}$ with $p=3200$. Fixed a $\bbeta$, we sample $n_1=n/2=800$ random vectors from distribution $\calN(\bbeta,\bI_{p})$ and $n_2=n/2=800$ random vectors from distribution $\calN(-\bbeta,\bI_p)$. 
Then, we calculate the top left singular vector of $\bY$ as in (\ref{eq:cluster_model}) and apply Algorithm~\ref{alg:clustering} to cluster the $1600$ points into two disjoint groups. For each $\bbeta$, we repeat the experiments for $50$ times and the average mis-clustering rate is recorded. The signal strengths are chosen so that $\|\bbeta\|_{\ell_2}=n^{\alpha}$ with $\alpha=0.06*k-0.5$ for $1\leq k\leq 20$. The average mis-clustering rates with respect to signal strengths are displayed in Figure~(\ref{fig:sub1}). Moreover, in Figure~(\ref{fig:sub1}), we also compare the average mis-clustering rates when two clusters have different sizes such as $3n_1=n_2=1200$ and $9n_1=n_2=1440$. As shown in Figure~(\ref{fig:sub1}), there exists a threshold around $\alpha=0.18$ such that the mis-clustering rates by Algorithm~\ref{alg:clustering} decreases extremely fast when the signal strength exceeds the threshold. Meanwhile, Figure~(\ref{fig:sub1}) also shows that the size balances of two clusters does not affect the threshold.  Both these numerical observations from Figure~(\ref{fig:sub1}) are consistent with the theoretic guarantees from Theorem~\ref{thm:exact_clustering}.

For sub-tensor localizations in model (\ref{eq:subtensor_model}), we fix $\lambda=1$ because the support localization task is trivial if $\lambda\gg 1$.  Similarly as in Remark~\ref{rmk:subtensor}, it then suffices to investigate the efficiency of Algorithm~\ref{alg:subtensor} with respect to the support sizes.
For simplicity, we choose
 $d_1=d_2=d_3$ and $C_1=C_2=C_3=[|C_1|]$, that is, the sub-tensor is in the bottom-left-front corner of $\EE \bY$. For each $d_1=150, d_1=200$ and $d_1=300$, we show the average mis-localization rates by Algorithm~\ref{alg:subtensor} with respect to the support size $|C_1|$. The average mis-localization rates are calculated from $50$ independent experiments. The support sizes are chosen as $|C_1|=\ceil{d_1^{\alpha}}$ with $0.06\leq \alpha\leq 1$.   The results of mis-localization rates are displayed in Figure~(\ref{fig:sub2}). Indeed, Figure~(\ref{fig:sub2}) shows that the mis-localization rates by Algorithm~\ref{alg:subtensor} starts to decrease extremely fast when the support size is around $|C_1|\asymp d_1^{0.6}$. The exponent $0.6$ is somewhat larger than the threshold $0.5$ claimed in Remark~\ref{rmk:subtensor}. Note that the dimension  size $d$ is moderately large (only $300$) in our simulations due to the heavy computational cost. 
\begin{figure}
\centering
\begin{subfigure}{.5\textwidth}
  \centering
  \includegraphics[width=0.95\linewidth]{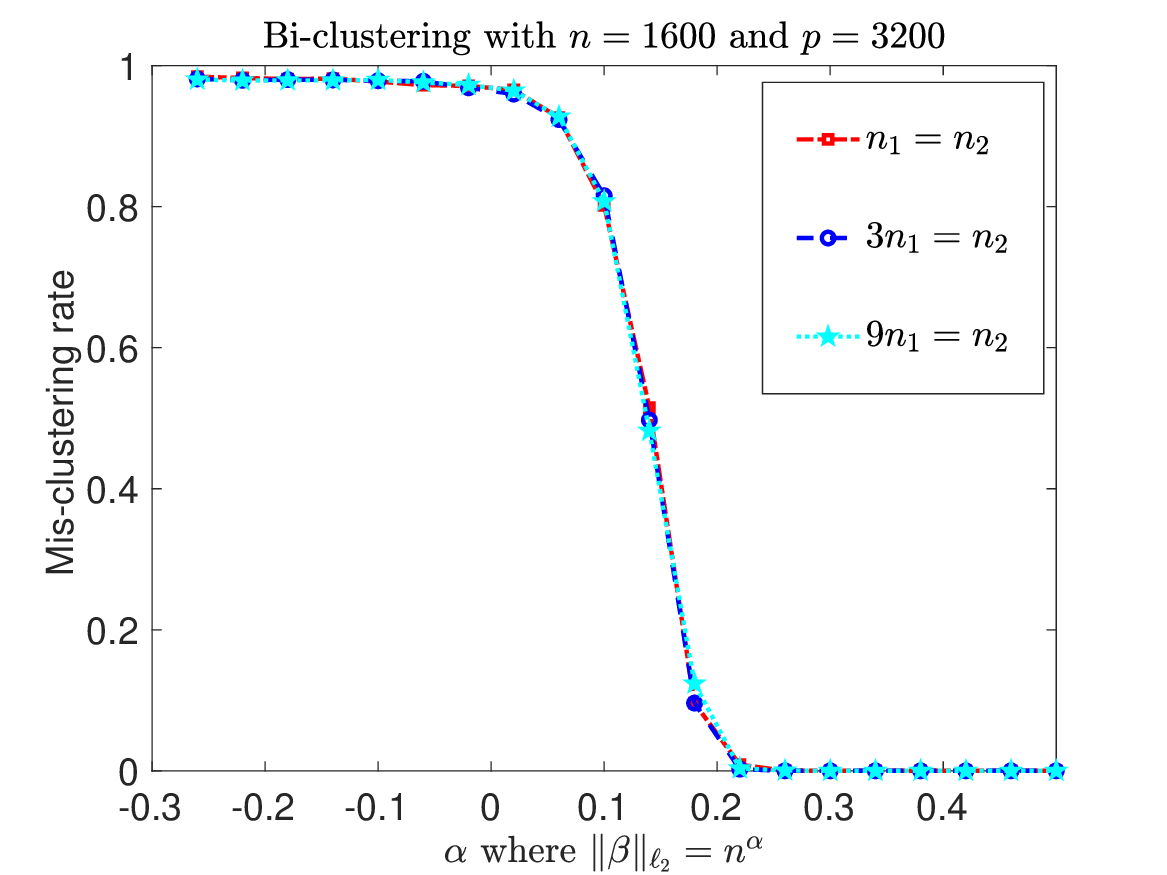}
  \caption{Mis-clustering rates of Algorithm~\ref{alg:clustering}}
  \label{fig:sub1}
\end{subfigure}%
\begin{subfigure}{.5\textwidth}
  \centering
  \includegraphics[width=0.95\linewidth]{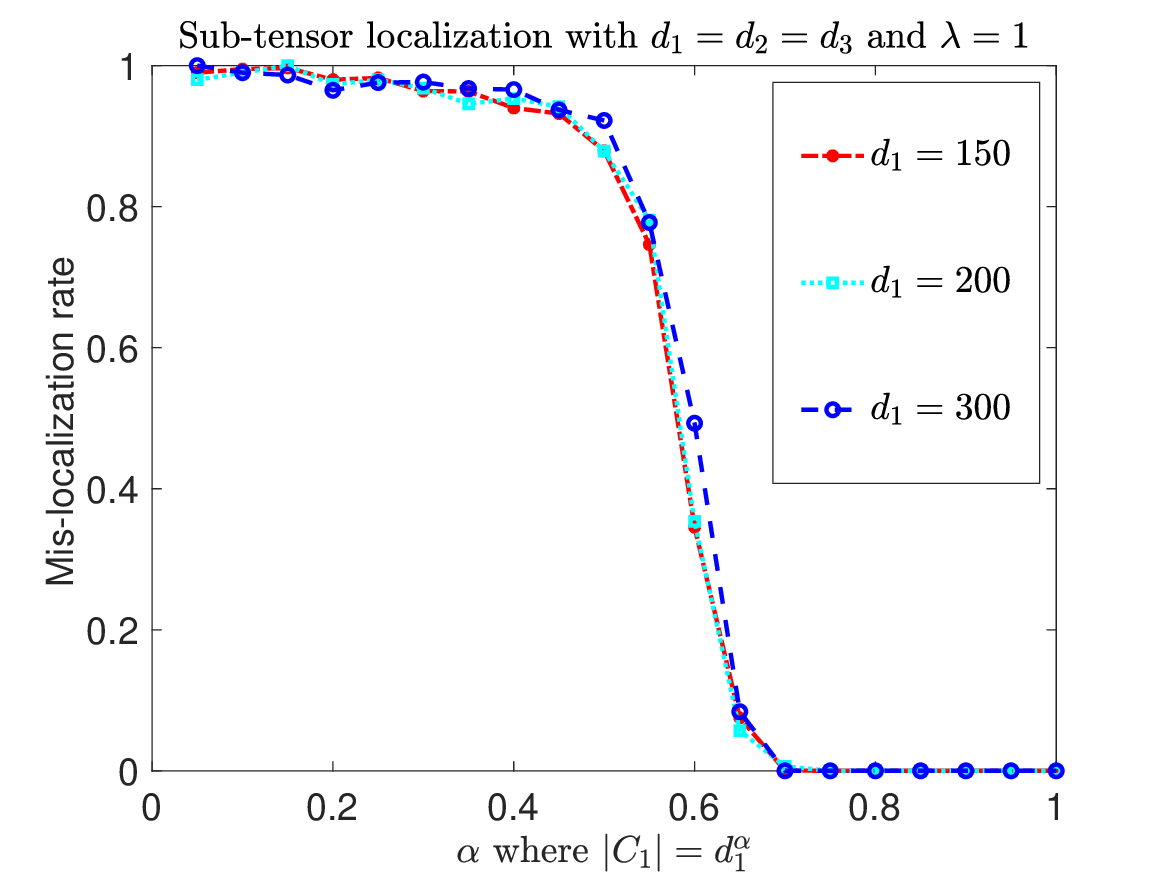}
  \caption{Mis-localization rates of Algorithm~\ref{alg:subtensor}}
  \label{fig:sub2}
\end{subfigure}
\caption{Simulation results for the performances of Algorithm~\ref{alg:clustering} and Algorithm~\ref{alg:subtensor}. In Figure~(\ref{fig:sub1}), the phase transition happens around the signal strength $\|\bbeta\|_{\ell_2}\approx n^{0.18}$ which coincides with Theorem~\ref{thm:exact_clustering}. Figure~(\ref{fig:sub1}) shows that Algorithm~\ref{alg:clustering} can exactly recover the true clusters when signal strength exceeds the aforementioned threshold. Figure~(\ref{fig:sub1}) also shows that the efficiency of Algorithm~\ref{alg:clustering} is unaffected when two clusters have unbalanced sizes. In Figure~(\ref{fig:sub2}), the phase transition happens when the support $C_1$ has size around $d_1^{0.6}$. It shows that Algorithm~\ref{alg:subtensor} can exactly locate the sub-tensor when the support size exceeds the aforementioned threshold. }
\label{fig:sim}
\end{figure}


\section{Proofs}\label{sec:proofs}
For notational brevity, we write $A\lesssim B$ if there exists an absolute constant $D_1$ such that $A\leq D_1B$. A similar notation would be $\gtrsim$ and $A\asymp B$ means that $A\lesssim B$ and $A\gtrsim B$ simultaneously. If the constant $D_1$ depends on some parameter $\gamma$, we shall write $\lesssim_{\gamma}, \gtrsim_{\gamma}$ and $\asymp_{\gamma}$.

Recall that the HOSVD is translated directly from SVD on $\calM_1(\bA)$ and the matrix perturbation model $\calM_1(\bY)=\calM_1(\bA)+\calM_1(\bZ)$. Without loss of generality, it suffices to focus on matrices with unbalanced sizes. In the remaining context, we write $\bA, \bZ, \bY\in\RR^{m_1\times m_2}$ instead of $\calM_1(\bA), \calM_1(\bZ), \calM_1(\bY)\in\RR^{m_1\times m_2}$, where $m_1=d_1$ and $m_2=d_2d_3$ such that $m_1\ll m_2$. The second order spectral analysis begins with
$$
\bY\bY^{\top}=\bA\bA^{\top}+\bGamma,\quad{\rm where}\quad \bGamma=\bA\bZ^{\top}+\bZ\bA^{\top}+\bZ\bZ^{\top}.
$$
Suppose that $\bA$ has the thin singular value decomposition
$$
\bA=\sum_{k=1}^{r_1}\lambda_k\big(\bu_k\otimes \bh_k\big)\in\RR^{m_1\times m_2}
$$
where $\{\bh_1,\ldots,\bh_{r_1}\}\subset {\rm span}\big\{\bv_j\otimes \bw_k^{\top}: j\in[r_2], k\in[r_3]\big\}$ are the right singular vectors of $\bA$. Moreover, $\bA\bA^{\top}$ admits the eigen-decomposition:
$$
\bA\bA^{\top}=\sum_{k=1}^{r_1}\lambda_k^2 \big(\bu_k\otimes \bu_k\big).
$$
In an identical fashion, denote the eigen-decomposition of $\bY\bY^{\top}$ by
$$
\bY\bY^{\top}=\sum_{k=1}^{m_1} \hat{\lambda}_k^2\big(\hat{\bu}_k\otimes \hat{\bu}_k\big).
$$
Even though Theorem~\ref{thm:linearform} and Theorem~\ref{thm:tildeu} are stated when the singular value $\lambda_k$ has multiplicity $1$, we present more general results in this section. Note that when there are repeated singular values, the singular vectors are not uniquely defined. In this case, let $\mu_1>\mu_2>\ldots>\mu_{s}>0$ be distinct singular values of $\bA$ with $s\leq r_1$. Denote $\Delta_k:=\{j: \lambda_j=\mu_k\}$ for $1\leq k\leq s$ and $\nu_k:={\rm Card}(\Delta_k)$ the multiplicity of $\mu_k$. Let $\mu_{s+1}=0$ which is a trivial eigenvalue of $\bA\bA^{\top}$ with multiplicity $m_1-r_1$. Then, the spectral decomposition of $\bA\bA^{\top}$ can be represented as
$$
\bA\bA^{\top}=\sum_{k=1}^{s+1}\mu_k^2 \bP_k^{uu}
$$
where the spectral projector $\bP_k^{uu}:=\sum_{j\in\Delta_k}\bu_j\otimes \bu_j$ which is uniquely defined. Correspondingly, define the empirical spectral projector based on eigen-decomposition of $\bY\bY^{\top}$,
$$
\hat{\bP}_k^{uu}:=\sum_{j\in\Delta_k}\hat{\bu}_j\otimes \hat{\bu}_j.
$$
We develop a sharp concentration bound for bilinear forms $\big<\hat{\bP}_k^{uu}\bx, \by\big>$ for $\bx, \by \in \RR^{m_1}$. Observe that $\bY\bY^{\top}$ has an identical eigen-space as $\bY\bY^{\top}-m_2\sigma^2\bI_{m_1}$. Let $\hat{\bGamma}:=\bGamma-m_2\sigma^2\bI_{m_1}$ and the spectral analysis shall be realized on $\bA\bA^{\top}+\hat{\bGamma}$. 

Several preliminary facts are introduced as follows. It is clear that the $k$-th eigengap is $\bar{g}_k\big(\bA\bA^{\top}\big):=\min\big(\mu_{k-1}^2-\mu_k^2, \mu_k^2-\mu_{k+1}^2\big)$ for $1\leq k\leq s$, where we set $\mu_0=+\infty$. The proof of Lemma~\ref{lemma:gammabound} is provided in the Appendix.
\begin{lemma}\label{lemma:gammabound}
For any deterministic matrix $\bB\in\RR^{m_3\times m_2}$, the following bounds hold
\begin{align}\label{eq:EAZ}
\mathbb{E}\|\bB\bZ^{\top}\|&\lesssim \sigma\|\bB\|\Big(m_1^{1/2}+m_3^{1/2}+(m_1m_3)^{1/4}\Big)\\
 \big\|\mathbb{E}\bZ\bZ^{\top}-m_2\sigma^2\bI_{m_1}\big\|&\lesssim \sigma^2(m_1m_2)^{1/2}.\nonumber
\end{align}
For any $t>0$, the following inequalities hold with probability at least $1-e^{-t}$,
\begin{align}\label{eq:PAZ}
\|\bB\bZ^{\top}\|&\lesssim \sigma\|\bB\|\Big(m_1^{1/2}+m_3^{1/2}+(m_1m_3)^{1/4}+t^{1/2}+(m_1t)^{1/4}\Big)\\
\big\|\bZ\bZ^{\top}-m_2\sigma^2\bI_{m_1}\big\|&\lesssim \sigma^2m_2^{1/2}\big(m_1^{1/2}+t^{1/2}\big).\nonumber
\end{align}
\end{lemma}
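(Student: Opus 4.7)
The plan is to reduce both inequalities to standard Gaussian matrix concentration.

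\emph{The bound on $\bB\bZ^{\top}$.} Take the thin SVD $\bB = \bU_B \bSigma_B \bV_B^{\top}$ with $\bV_B \in \RR^{m_2 \times r}$ and $r = \rank(\bB) \leq \min(m_2, m_3)$. By rotational invariance of the Gaussian, $\bG := \bZ \bV_B \in \RR^{m_1 \times r}$ has i.i.d.\ $\calN(0,\sigma^2)$ entries, and $\bB\bZ^{\top} = \bU_B \bSigma_B \bG^{\top}$, giving $\|\bB\bZ^{\top}\| \leq \|\bB\|\cdot\|\bG\|$. I would then invoke Gordon's comparison inequality to obtain $\mathbb{E}\|\bG\| \leq \sigma(\sqrt{m_1} + \sqrt{r}) \leq \sigma(\sqrt{m_1} + \sqrt{m_3})$, and apply Borell's Gaussian concentration inequality to the $1$-Lipschitz-in-Frobenius map $\bG \mapsto \|\bG\|$ to get $\PP(\|\bG\| \geq \mathbb{E}\|\bG\| + \sigma\sqrt{2t}) \leq e^{-t}$. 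The terms $(m_1 m_3)^{1/4}$ and $(m_1 t)^{1/4}$ that appear in the stated bound are absorbed into $\sqrt{m_1}+\sqrt{m_3}$ and $\sqrt{m_1}+\sqrt{t}$ by AM--GM, so both inequalities follow.

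\emph{The Wishart bound on $\bZ\bZ^{\top} - m_2\sigma^2\bI$.} The starting identity is
$$\|\bZ\bZ^{\top} - m_2\sigma^2 \bI_{m_1}\| = \max_{1 \leq i \leq m_1}\big|\sigma_i^2(\bZ) - m_2\sigma^2\big|,$$
so everything reduces to two-sided control of the extreme singular values of $\bZ$. By the Davidson--Szarek bounds (themselves consequences of Gordon), $\mathbb{E}\sigma_{\max}(\bZ) \leq \sigma(\sqrt{m_1}+\sqrt{m_2})$ and, when $m_1 \leq m_2$, $\mathbb{E}\sigma_{\min}(\bZ) \geq \sigma(\sqrt{m_2}-\sqrt{m_1})$. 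Since $\bZ\mapsto \pm\sigma_{\max}(\bZ)$ and $\bZ\mapsto \pm\sigma_{\min}(\bZ)$ are $\sigma$-Lipschitz in Frobenius norm, Borell's inequality together with a union bound gives, with probability at least $1 - 2e^{-t}$,
$$\sigma\sqrt{m_2} - \sigma\sqrt{m_1} - \sigma\sqrt{2t} \leq \sigma_{\min}(\bZ) \leq \sigma_{\max}(\bZ) \leq \sigma\sqrt{m_2} + \sigma\sqrt{m_1} + \sigma\sqrt{2t}.$$
Squaring and subtracting $m_2\sigma^2$ yields $\max_i|\sigma_i^2(\bZ) - m_2\sigma^2| \lesssim \sigma^2\sqrt{m_2}(\sqrt{m_1}+\sqrt{t})$, matching the stated tail inequality. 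The corresponding expectation bound (which I read as $\mathbb{E}\|\bZ\bZ^{\top} - m_2\sigma^2 \bI\|$, since $\mathbb{E}\bZ\bZ^{\top}$ is literally $m_2\sigma^2 \bI_{m_1}$) follows either by integrating the tail, or by applying the same algebra directly to $\mathbb{E}\sigma_{\max}^2$ and $\mathbb{E}\sigma_{\min}^2$, using Gaussian concentration once more to convert first moments of $\sigma_{\max}, \sigma_{\min}$ into control of their second moments.

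\emph{Main obstacle.} There is no genuinely hard step here—the entire argument is Gordon/Slepian--Chevet plus Borell. The only bookkeeping point worth flagging is that the Wishart inequality requires the \emph{two-sided} deviations of $\sigma_{\max}$ and $\sigma_{\min}$ from the common centering $\sigma\sqrt{m_2}$; it is the pairing of the $\sqrt{m_2}$ factor (arising from this centering) with the deviation $\sqrt{m_1}+\sqrt{t}$ that produces the final scale $\sigma^2\sqrt{m_2}(\sqrt{m_1}+\sqrt{t})$, which is sharper than the naive $\|\bZ\|^2$ bound of order $\sigma^2(m_1+m_2)$ and is what enables the second-order spectral argument of the paper.
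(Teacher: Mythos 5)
Your argument is correct, but it takes a different route from the paper. The paper does not prove the lemma from scratch: it rewrites $\bZ\bZ^{\top}-m_2\sigma^2\bI_{m_1}$ and $\bB\bZ^{\top}\bZ\bB^{\top}$ as sums of centered rank-one terms $\bz_i\otimes\bz_i-\sigma^2\bI$ (resp. $(\bB\check{\bz}_j)\otimes(\bB\check{\bz}_j)-\sigma^2\bB\bB^{\top}$), i.e.\ as sample-covariance fluctuations, and then cites the sharp concentration bound for sample covariance operators of Koltchinskii and Lounici (2017), skipping all details. You instead give a self-contained proof from classical ingredients: for $\bB\bZ^{\top}$, the rotation-invariance reduction to an $m_1\times r$ Gaussian matrix plus Gordon/Davidson--Szarek and Borell's inequality (which in fact yields the cleaner bound $\sigma\|\bB\|(\sqrt{m_1}+\sqrt{m_3}+\sqrt{t})$, the stated $(m_1m_3)^{1/4}$ and $(m_1t)^{1/4}$ terms being artifacts of the paper's passage through $\bB\bZ^{\top}\bZ\bB^{\top}$); for the Wishart term, two-sided control of $\sigma_{\min},\sigma_{\max}$ around the common center $\sigma\sqrt{m_2}$, which is exactly the mechanism that produces the scale $\sigma^2\sqrt{m_2}(\sqrt{m_1}+\sqrt{t})$ that the cited covariance bound encodes. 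Your reading of the expectation statement as $\mathbb{E}\|\bZ\bZ^{\top}-m_2\sigma^2\bI\|$ is also the intended one. The only caveat worth recording is that your squaring step (and indeed the lemma as literally stated, ``for any $t>0$'') requires $t\lesssim m_2$ and $m_1\lesssim m_2$, since for $t\gg m_2$ the deviation of $\sigma_{\max}^2$ grows like $\sigma^2 t$ rather than $\sigma^2\sqrt{m_2 t}$; this is harmless here because every application in the paper takes $t\le m_1\le m_2$.
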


\subsection{Proof of Theorem~\ref{thm:linearform}}
To this end, define
$$
\bC_k^{uu}:=\sum_{s\neq k}\frac{1}{\mu_s^2-\mu_k^2}\bP_{s}^{uu}
$$
and
$$
\bP_k^{hh}:=\sum_{j\in\Delta_k} \bh_j\otimes \bh_j.
$$
Theorem~\ref{thm:linearform} is decomposed of two separate components. Theorem~\ref{thm:xPky} provides the concentration bound for $\big|\langle\bP_k\bx, \by\rangle-\EE\langle\bP_k\bx, \by\rangle\big|$ by Gaussian isoperimetric inequality and the proof is postponed to the Appendix. In Theorem~\ref{thm:bias}, we characterize the bias $\EE\hat{\bP}_k^{uu}-\bP_k^{uu}$.
\begin{theorem}\label{thm:xPky}
Let $\delta(m_1,m_2):=\mu_1\sigma m_1^{1/2}+\sigma^2(m_1m_2)^{1/2}$ and suppose that $\bar{g}_k\big(\bA\bA^{\top}\big)\geq D_1\delta(m_1,m_2)$ for a large enough constant $D_1>0$. Then, for any $\bx,\by\in\RR^{m_1}$, there exists an absolute constant $D_2>0$ such that for all $\log8\leq t\lesssim m_1$, the following bound holds with probability at least $1-e^{-t}$,
\begin{eqnarray*}
\big|\langle\hat{\bP}_k^{uu}\bx, \by\rangle-\EE\langle\hat{\bP}_k^{uu}\bx, \by\rangle\big|\leq D_2t^{1/2}\bigg(\frac{\sigma\mu_1 +\sigma^2m_2^{1/2}}{\bar{g}_k\big(\bA\bA^{\top}\big)}\bigg)\|\bx\|_{\ell_2}\|\by\|_{\ell_2}.
\end{eqnarray*}
\end{theorem}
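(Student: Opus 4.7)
The plan is to realize $\phi(\bZ) := \langle \hat{\bP}_k^{uu}\bx, \by\rangle$ as a Lipschitz function of the Gaussian matrix $\bZ$ on a high-probability event where the spectrum of $\bY\bY^{\top}$ is well-separated, extend it to a globally Lipschitz function on $\RR^{m_1\times m_2}$, and apply Gaussian isoperimetry. To this end, I would start from the Riesz representation
$$
\hat{\bP}_k^{uu} \;=\; -\frac{1}{2\pi i}\oint_{\Gamma_k}\bigl(\bY\bY^{\top}-z\bI\bigr)^{-1}\,dz,
$$
with $\Gamma_k$ the positively oriented circle centered at $\mu_k^2$ of radius $\rho_k \asymp \bar g_k(\bA\bA^{\top})$. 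Define the ``good event'' $\mathcal{G}$ on which $\|\bGamma\|\le \rho_k/2$; by Weyl's inequality this ensures $\Gamma_k$ separates the $k$-th eigen-cluster of $\bY\bY^{\top}$ from the rest, and from Lemma~\ref{lemma:gammabound} together with the eigengap assumption $\bar g_k(\bA\bA^{\top})\ge D_1\delta(m_1,m_2)$, $\mathcal{G}$ holds with probability at least $1-e^{-c m_1}$.

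On $\mathcal{G}$, using the identity $\partial_{\bZ}(\bY\bY^{\top})[\bDelta]=\bY\bDelta^{\top}+\bDelta\bY^{\top}$ and the resolvent derivative, the Gateaux derivative in direction $\bDelta$ becomes
$$
D\phi(\bZ)[\bDelta] \;=\; -\frac{1}{2\pi i}\oint_{\Gamma_k}\bigl\langle R(z)\bx\by^{\top}R(z)\bY + R(z)\by\bx^{\top}R(z)\bY,\;\bDelta\bigr\rangle_F\,dz,
$$
where $R(z)=(\bY\bY^{\top}-z\bI)^{-1}$. On $\mathcal{G}$ one has $\|R(z)\|\lesssim 1/\bar g_k(\bA\bA^{\top})$ uniformly on $\Gamma_k$ and, with overwhelming probability, $\|\bY\|\lesssim \mu_1+\sigma m_2^{1/2}$. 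Together with the fact that $\|\bY^{\top}R(z)\|$ admits the sharper bound $\max_j |\hat\lambda_j|/|z-\hat\lambda_j^2|\lesssim (\mu_1+\sigma m_2^{1/2})/\bar g_k(\bA\bA^{\top})$, estimating the rank-one matrices inside the integrand in Frobenius norm and using that $|\Gamma_k|\asymp \bar g_k(\bA\bA^{\top})$ yields
$$
\|\nabla_{\bZ}\phi(\bZ)\|_F \;\lesssim\; L\;:=\;\frac{\mu_1+\sigma m_2^{1/2}}{\bar g_k(\bA\bA^{\top})}\,\|\bx\|_{\ell_2}\|\by\|_{\ell_2}\qquad\text{on }\mathcal{G}.
$$

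I would then use McShane's extension to obtain $\tilde\phi:\RR^{m_1\times m_2}\to\RR$ with global Lipschitz constant $O(L)$ such that $\tilde\phi=\phi$ on $\mathcal{G}$. The Tsirelson--Ibragimov--Sudakov Gaussian concentration inequality then gives
$$
\PP\bigl(|\tilde\phi(\bZ)-\EE\tilde\phi(\bZ)|\ge \sigma L \sqrt{2t}\,\bigr)\le 2e^{-t},
$$
and since $t\ge \log 8$ this probability is bounded by $e^{-t}$ after absorbing constants. It remains to replace $\EE\tilde\phi$ by $\EE\phi$: on the complement $\mathcal{G}^{c}$ both $\tilde\phi$ and $\phi$ are bounded by $\|\bx\|\|\by\|$ (since $\phi$ is controlled by $\|\hat\bP_k^{uu}\|\le 1$ and $\tilde\phi$ is extended with Lipschitz constant $L$ near $\mathcal{G}$), and $\PP(\mathcal{G}^{c})\le e^{-cm_1}$ is much smaller than the target tail for $t\lesssim m_1$, so $|\EE\phi-\EE\tilde\phi|$ is negligible compared to $\sigma L\sqrt{t}$. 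Combining with $\{\mathcal{G}\}$ then yields the stated bound with the claimed probability.

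The main obstacle is the Lipschitz-constant computation: the naive bound $\|R(z)\|^2\|\bY\|$ on each side of the contour gives the correct $(\mu_1+\sigma\sqrt{m_2})/\bar g_k(\bA\bA^{\top})$ factor only after one exploits the cancellation $\|\bY^{\top}R(z)\|\lesssim \|\bY\|/\bar g_k(\bA\bA^{\top})$ on $\Gamma_k$ and tracks that the contour length $|\Gamma_k|$ precisely cancels one factor of $\bar g_k(\bA\bA^{\top})^{-1}$. A secondary difficulty is verifying that the extended function's expectation is close to $\EE\phi$ uniformly in $\bx,\by$; this reduces to showing that the eigengap condition $\bar g_k(\bA\bA^{\top})\ge D_1\delta(m_1,m_2)$ makes $\PP(\mathcal{G}^{c})$ super-polynomially small, which follows directly from Lemma~\ref{lemma:gammabound} applied to the three pieces $\bA\bZ^{\top}$, $\bZ\bA^{\top}$ and $\bZ\bZ^{\top}-m_2\sigma^2\bI$.
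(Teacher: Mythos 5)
Your overall strategy (realize the bilinear form of the spectral projector as a Lipschitz function of the Gaussian matrix and invoke Gaussian isoperimetry) is close in spirit to how the paper treats the remainder term, but as written it has a decisive gap: you never recenter $\bZ\bZ^{\top}$. Your good event is $\mathcal{G}=\{\|\bGamma\|\le \rho_k/2\}$ with $\bGamma=\bA\bZ^{\top}+\bZ\bA^{\top}+\bZ\bZ^{\top}$ and $\rho_k\asymp \bar g_k(\bA\bA^{\top})$. But $\|\bZ\bZ^{\top}\|$ concentrates near $\sigma^2 m_2$, while the hypothesis only guarantees $\bar g_k(\bA\bA^{\top})\gtrsim \delta(m_1,m_2)=\sigma\mu_1 m_1^{1/2}+\sigma^2(m_1m_2)^{1/2}$, which is far smaller than $\sigma^2 m_2$ in the unbalanced regime $m_2\gg m_1$ that this theorem exists to handle. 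So $\PP(\mathcal{G})$ is essentially zero there, not $1-e^{-cm_1}$; correspondingly, the contour $\Gamma_k$ centered at $\mu_k^2$ does \emph{not} separate the $k$-th eigen-cluster of $\bY\bY^{\top}$ (all eigenvalues of $\bY\bY^{\top}$ are shifted up by roughly $m_2\sigma^2$), and the bounds $\|R(z)\|\lesssim 1/\bar g_k(\bA\bA^{\top})$ and $\max_j|\hat\lambda_j|/|z-\hat\lambda_j^2|\lesssim(\mu_1+\sigma m_2^{1/2})/\bar g_k(\bA\bA^{\top})$ are false for the uncentered resolvent. The fix is exactly the paper's ``second order'' device: work with $\hat\bGamma=\bGamma-m_2\sigma^2\bI_{m_1}$ (same eigenvectors, and $\|\hat\bGamma\|\lesssim\delta(m_1,m_2)$ with probability $1-e^{-m_1}$ by Lemma~\ref{lemma:gammabound}), i.e.\ write $\hat\bP_k^{uu}$ as the contour integral of the resolvent of $\bY\bY^{\top}-m_2\sigma^2\bI_{m_1}$ around $\mu_k^2$. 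With that replacement your gradient computation does produce the right Lipschitz constant $L\asymp(\mu_1+\sigma m_2^{1/2})\bar g_k^{-1}(\bA\bA^{\top})\|\bx\|_{\ell_2}\|\by\|_{\ell_2}$, and $\sigma L t^{1/2}$ matches the claimed rate; your last sentence (citing the pieces $\bA\bZ^{\top}$, $\bZ\bA^{\top}$, $\bZ\bZ^{\top}-m_2\sigma^2\bI$) suggests you half-intended this, but the event, the contour, and the resolvent estimates must all be stated for the recentered matrix.

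Two further points need repair even after recentering. First, a pointwise gradient bound on $\mathcal{G}$ does not give a Lipschitz bound for $\phi$ restricted to $\mathcal{G}$ unless segments between points of $\mathcal{G}$ stay in (an enlargement of) $\mathcal{G}$; the sublevel set of $\bZ\mapsto\|\hat\bGamma(\bZ)\|$ is not convex, so the McShane extension step is not yet justified. Moreover, since your $L$ depends on $\|\bY\|$, the event must also cap $\|\bZ\|$. The paper circumvents both issues with the smooth double cutoff in $\|\hat\bGamma\|$ and $\|\bZ\|$ (Lemma~\ref{lemma:FLip}) and median-based isoperimetry (Lemma~\ref{lemma:Skxycon}). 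Second, your replacement of $\EE\tilde\phi$ by $\EE\phi$ via $\PP(\mathcal{G}^c)\le e^{-cm_1}$ being ``negligible'' compared to $\sigma L t^{1/2}$ has no uniform justification (nothing in the hypotheses prevents $\sigma L$ from being extremely small when $\mu_1/\sigma$ is huge); the paper handles the analogous median-to-mean and extreme-signal issues by the case analysis in Lemma~\ref{lemma:Skxycon} and by treating the linear part $\bL_k(\hat\bGamma)$ separately (Lemma~\ref{lemma:Lkcon}), which has exactly zero mean. So the route is salvageable and genuinely more direct than the paper's linear-plus-remainder split, but as proposed the core probabilistic event and resolvent bounds are wrong, and the extension and mean-comparison steps are incomplete.
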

The following spectral representation formula is needed whose proof can be found in \cite{koltchinskii2016asymptotics}.
\begin{lemma}\label{lemma:hatPk}
The following bound holds
$$
\|\hat{\bP}_k^{uu}-\bP_k^{uu}\|\leq \frac{4\|\hat{\bGamma}\|}{\bar{g}_k(\bA\bA^{\top})}.
$$
Moreover, $\hat{\bP}_k^{uu}$ can be represented as
$$
\hat{\bP}_k^{uu}-\bP_k^{uu}=\bL_k(\hat{\bGamma})+\bS_k(\hat{\bGamma})
$$
where $\bL_k(\hat{\bGamma})=\bP_k^{uu}\hat{\bGamma}\bC_k^{uu}+\bC_k^{uu}\hat{\bGamma}\bP_k^{uu}$ and 
$$
\|\bS_k(\hat{\bGamma})\|\leq 14\bigg(\frac{\|\hat{\bGamma}\|}{\bar{g}_k(\bA\bA^{\top})}\bigg)^2.
$$
\end{lemma}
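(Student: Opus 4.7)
The plan is to use the standard resolvent/Riesz-projector representation from holomorphic functional calculus. Since $\bP_k^{uu}$ is the spectral projector of $\bA\bA^{\top}$ associated with the eigenvalue $\mu_k^2$, the Riesz formula gives
$$
\bP_k^{uu}=-\frac{1}{2\pi i}\oint_{\gamma_k}\bR(z)\,dz,\qquad \bR(z):=(\bA\bA^{\top}-z\bI)^{-1},
$$
where $\gamma_k$ is chosen to be the circle of radius $\bar{g}_k(\bA\bA^{\top})/2$ centered at $\mu_k^2$. With this contour the resolvent bound $\|\bR(z)\|\le 2/\bar{g}_k(\bA\bA^{\top})$ holds on $\gamma_k$. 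Provided $\|\hat{\bGamma}\|<\bar{g}_k(\bA\bA^{\top})/2$ (the opposite case makes the two stated bounds trivial once one notes that spectral projectors have norm at most $1$), no eigenvalue of $\bA\bA^{\top}+\hat{\bGamma}$ crosses $\gamma_k$, so the analogous formula holds for $\hat{\bP}_k^{uu}$ with the perturbed resolvent.

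Next I expand the perturbed resolvent as a convergent Neumann series,
$$
(\bA\bA^{\top}+\hat{\bGamma}-z\bI)^{-1}=\sum_{m=0}^{\infty}(-1)^{m}\bR(z)\big(\hat{\bGamma}\bR(z)\big)^{m},
$$
subtract the $m=0$ contribution (which integrates to $\bP_k^{uu}$) and interchange sum with integral to obtain $\hat{\bP}_k^{uu}-\bP_k^{uu}=\sum_{m\ge 1}\bT_{k,m}$, where $\bT_{k,m}=\frac{(-1)^{m+1}}{2\pi i}\oint_{\gamma_k}\bR(z)(\hat{\bGamma}\bR(z))^{m}\,dz$. The first-order term $\bT_{k,1}$ is computed by residue calculus using the spectral expansion $\bR(z)=\sum_{s}(\mu_s^2-z)^{-1}\bP_s^{uu}$: only pairs $(s,s')$ with exactly one index equal to $k$ produce a simple pole at $\mu_k^2$ with nonzero residue, and summing these residues collapses $\bT_{k,1}$ to exactly $\bP_k^{uu}\hat{\bGamma}\bC_k^{uu}+\bC_k^{uu}\hat{\bGamma}\bP_k^{uu}=\bL_k(\hat{\bGamma})$.

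For the remainder $\bS_k(\hat{\bGamma})=\sum_{m\ge 2}\bT_{k,m}$, estimate each contour integral by the trivial length-times-sup bound: the contour has length $\pi\bar{g}_k(\bA\bA^{\top})$, and on it $\|\bR(z)\|\le 2/\bar{g}_k(\bA\bA^{\top})$ and $\|\hat{\bGamma}\bR(z)\|\le 2\|\hat{\bGamma}\|/\bar{g}_k(\bA\bA^{\top})$. Summing the resulting geometric series in $2\|\hat{\bGamma}\|/\bar{g}_k(\bA\bA^{\top})$ from $m=2$ yields $\|\bS_k(\hat{\bGamma})\|\lesssim\big(\|\hat{\bGamma}\|/\bar{g}_k(\bA\bA^{\top})\big)^{2}$; a careful bookkeeping of constants produces the factor $14$. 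The overall norm bound $\|\hat{\bP}_k^{uu}-\bP_k^{uu}\|\le 4\|\hat{\bGamma}\|/\bar{g}_k(\bA\bA^{\top})$ is obtained analogously by summing from $m=1$, or equivalently from a Davis--Kahan style inequality applied to $\bA\bA^{\top}$ and $\bA\bA^{\top}+\hat{\bGamma}$. The one genuinely delicate issue is verifying that the Neumann series converges uniformly on $\gamma_k$ and that the perturbed eigenvalues inside the contour rearrange only within $\Delta_k$; both follow once the contour is placed halfway across the spectral gap and $\|\hat{\bGamma}\|$ is below $\bar{g}_k(\bA\bA^{\top})/2$. These arguments are classical and the full details are in Koltchinskii--Lounici, so I would simply reference them rather than reproduce them here.
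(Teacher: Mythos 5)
Your proposal is correct and is essentially the same argument that underlies the paper's treatment: the paper does not prove this lemma itself but cites Koltchinskii and Lounici, and your Riesz-projector representation with the Neumann-series expansion of the perturbed resolvent, the residue identification of the first-order term, and the geometric-series bound (plus the trivial regime $\|\hat{\bGamma}\|\geq \bar{g}_k(\bA\bA^{\top})/2$) for the remainder is precisely the proof carried out in that reference. The only wrinkle is a sign convention: with the paper's definition $\bC_k^{uu}=\sum_{s\neq k}(\mu_s^2-\mu_k^2)^{-1}\bP_s^{uu}$ the residue computation actually yields the linear term with coefficients $(\mu_k^2-\mu_s^2)^{-1}$, so your claim that it collapses to the paper's $\bL_k(\hat{\bGamma})$ hides a sign flip that originates in the paper's notation rather than in your argument, and it affects none of the stated norm bounds.
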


\begin{theorem}\label{thm:bias}
Let $\delta(m_1,m_2):=\mu_1\sigma m_1^{1/2}+\sigma^2(m_1m_2)^{1/2}$ and suppose that $\bar{g}_k\big(\bA\bA^{\top}\big)\geq D_1\delta(m_1,m_2)$ for a large enough constant $D_1>0$ and $m_2e^{-m_1/2}\leq 1$. Then there exists an absolute constant $D_2>0$ such that
\begin{eqnarray*}
\big\|\EE\hat{\bP}_k^{uu}-\bP_k^{uu}-\bP_k^{uu}\big(\EE\hat{\bP}_k^{uu}-\bP_k^{uu}\big)\bP_k^{uu} \big\|\leq D_2\nu_k\frac{\sigma^2m_1+\sigma^2m_2^{1/2}+\sigma\mu_1}{\bar{g}_k\big(\bA\bA^{\top}\big)}\bigg(\frac{\delta(m_1,m_2)}{\bar{g}_k\big(\bA\bA^{\top}\big)}\bigg).
\end{eqnarray*}
\end{theorem}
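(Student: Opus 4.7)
The plan is to reduce the bias to the second-order term of a resolvent expansion and then read off its block structure with respect to $\bP_k^{uu}$. By Lemma~\ref{lemma:hatPk}, $\hat{\bP}_k^{uu}-\bP_k^{uu}=\bL_k(\hat{\bGamma})+\bS_k(\hat{\bGamma})$. Since $\bL_k(\bG)=\bP_k^{uu}\bG\bC_k^{uu}+\bC_k^{uu}\bG\bP_k^{uu}$ is linear in $\bG$, and $\mathbb{E}\hat{\bGamma}=\mathbb{E}[\bA\bZ^{\top}+\bZ\bA^{\top}+\bZ\bZ^{\top}]-m_2\sigma^2\bI_{m_1}=\mathbf{0}$, one has $\mathbb{E}\bL_k(\hat{\bGamma})=\mathbf{0}$ and hence $\mathbb{E}\hat{\bP}_k^{uu}-\bP_k^{uu}=\mathbb{E}\bS_k(\hat{\bGamma})$. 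The task then collapses to bounding the non-$(\bP_k^{uu},\bP_k^{uu})$ blocks of $\mathbb{E}\bS_k(\hat{\bGamma})$.

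To go further I would use the Cauchy/Dunford contour representation $\bS_k(\bG)=\sum_{r\geq 2}\bS_{k,r}(\bG)$, with $\bS_{k,r}(\bG)$ the $r$-fold resolvent contour integral around $\mu_k^2$. The standard bound $\|\bS_{k,r}(\bG)\|\lesssim \big(\|\bG\|/\bar{g}_k(\bA\bA^{\top})\big)^{r}$, together with Lemma~\ref{lemma:gammabound} (and the hypothesis $m_2 e^{-m_1/2}\leq 1$ to absorb the low-probability tail via truncation and moment estimates for $\|\hat{\bGamma}\|$), yields $\big\|\sum_{r\geq 3}\mathbb{E}\bS_{k,r}(\hat{\bGamma})\big\|\lesssim \delta(m_1,m_2)^{3}/\bar{g}_k(\bA\bA^{\top})^{3}$, which is absorbed into the target bound. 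What remains is to analyse $\mathbb{E}\bS_{k,2}(\hat{\bGamma})$.

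For this I would split $\hat{\bGamma}=\bE_1+\bE_2$ with $\bE_1:=\bA\bZ^{\top}+\bZ\bA^{\top}$ (odd in $\bZ$) and $\bE_2:=\bZ\bZ^{\top}-m_2\sigma^2\bI_{m_1}$ (centred, quadratic in $\bZ$). Expanding $\bS_{k,2}$ bilinearly, the cross expectations are cubic in $\bZ$ and hence vanish by Gaussian symmetry, giving $\mathbb{E}\bS_{k,2}(\hat{\bGamma})=\mathbb{E}\bS_{k,2}(\bE_1)+\mathbb{E}\bS_{k,2}(\bE_2)$. Residue calculus rewrites $\bS_{k,2}(\bG)$ as a sum of terms of the form $\bP_{s_1}^{uu}\bG\bP_{s_2}^{uu}\bG\bP_{s_3}^{uu}$ whose coefficients depend only on which of $s_1,s_2,s_3$ equal $k$; the unique family sitting in the $(\bP_k^{uu},\bP_k^{uu})$ block is $\pm\bP_k^{uu}\bG\bC_k^{(2)}\bG\bP_k^{uu}$ with $\bC_k^{(2)}:=\sum_{s\neq k}(\mu_s^2-\mu_k^2)^{-2}\bP_s^{uu}$, which is precisely what is cancelled by the subtraction $\bP_k^{uu}(\cdot)\bP_k^{uu}$ on the left-hand side of the claim. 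For every remaining off-block term I would compute the expectation via Isserlis/Wick contractions; these produce traces such as $\operatorname{tr}(\bP_s^{uu}\bA\bA^{\top})\lesssim \nu_s\mu_1^{2}$, $\operatorname{tr}(\bP_s^{uu})\leq \nu_s$, and $\operatorname{tr}(\bI_{m_1})=m_1$, which, after dividing by the two eigengap factors attached to $\bC_k^{uu}$ or $\bC_k^{(2)}$, assemble into exactly $\nu_k(\sigma^{2}m_1+\sigma^{2}m_2^{1/2}+\sigma\mu_1)\delta(m_1,m_2)/\bar{g}_k(\bA\bA^{\top})^{2}$.

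The hard part will be the explicit evaluation of $\mathbb{E}\bS_{k,2}(\bE_2)$. Because $\bE_2$ is quadratic in $\bZ$, these expectations require fourth-order Gaussian Wick contractions, which generate several distinct contraction patterns. Two of them collapse into scalar multiples of $\bP_k^{uu}$ (and so disappear once the $(\bP_k^{uu},\bP_k^{uu})$ block is subtracted), but the third leaves a genuine off-diagonal residue. Showing that this residue has operator norm bounded by $\nu_k\sigma^{4}(m_1+m_2^{1/2})\delta(m_1,m_2)/\bar{g}_k(\bA\bA^{\top})^{3}$, rather than a naive $\sigma^{4}m_1 m_2^{1/2}/\bar{g}_k(\bA\bA^{\top})^{3}$, is the delicate step: it requires exploiting the unbalanced geometry $m_1\lesssim m_2$ so that only the smaller dimension $m_1$ enters the off-block directions. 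This is precisely the one-sided sharpening that the second-order spectral method buys over the first-order analysis of \cite{koltchinskii2016perturbation}.
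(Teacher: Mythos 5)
Your reduction to $\EE\hat{\bP}_k^{uu}-\bP_k^{uu}=\EE\bS_k(\hat{\bGamma})$ and your block-structure observation are fine, but the core of your plan has a genuine gap: you propose to compute only the second-order term $\EE\bS_{k,2}$ exactly and to dispose of the tail via $\big\|\sum_{r\geq 3}\EE\bS_{k,r}(\hat{\bGamma})\big\|\lesssim\big(\delta(m_1,m_2)/\bar{g}_k(\bA\bA^{\top})\big)^3$, claiming this is absorbed into the target. It is not. Absorption would require $\delta^2(m_1,m_2)\lesssim\big(\sigma\mu_1+\sigma^2m_2^{1/2}+\sigma^2m_1\big)\,\bar{g}_k(\bA\bA^{\top})$, and the theorem only assumes $\bar{g}_k(\bA\bA^{\top})\geq D_1\delta(m_1,m_2)$, so in the boundary regime $\bar{g}_k\asymp\delta$ this reduces to $\delta(m_1,m_2)\lesssim\sigma\mu_1+\sigma^2m_2^{1/2}+\sigma^2m_1$, which fails generically (e.g.\ $\sigma=1$, $\mu_1\asymp m_1$, $m_2\asymp m_1^2$ gives $\delta\asymp m_1^{3/2}$ versus a right-hand side of order $m_1$). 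In that regime your tail bound is of constant order while the claimed bound is $o(1)$. So the naive $(\|\hat{\bGamma}\|/\bar{g}_k)^r$ estimate cannot be used for any $r\geq 3$; every order of the perturbation series must itself be shown to gain a factor of order $(\sigma\mu_1+\sigma^2m_2^{1/2})/\delta(m_1,m_2)$ over the crude bound.

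That uniform-in-$r$ gain is exactly where the paper's work lies, and it is not obtainable by Wick calculus order by order without the following structural device: one replaces $\hat{\bGamma}$ by $\widetilde{\bGamma}=\hat{\bGamma}-\big(\bZ\bP_k^{hh}\bZ^{\top}-\nu_k\sigma^2\bI_{m_1}\big)$ (the cost of this swap is Lemma~\ref{lemma:hatS-tildeS}, which is also where the $\sigma^2 m_1$ term in the statement comes from), then inside each contour term uses the identity $(a+b)^r=b^r+\sum_s b^{s-1}a(a+b)^{r-s}$ to isolate a single factor $\bP_k^{uu}\widetilde{\bGamma}$, and finally exploits that, after removing $\bZ\bP_k^{hh}\bZ^{\top}$, the Gaussian variables $\langle\bz_i,\bh_j\rangle$ (and $\langle\bz_i,\bu_j\rangle$), $j\in\Delta_k$, are independent of all the remaining factors. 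Conditioning on those factors shows that the piece acting on $\bu_j$ contributes only $\sigma\mu_1+\sigma^2m_2^{1/2}$ in $L_2$, while every other factor is bounded by $\|\widetilde{\bGamma}\|\leq\delta_t$ on the truncation event; summing the geometric series over $r$ then yields the stated bound. Your proposal contains neither the $\widetilde{\bGamma}$ modification (without which the independence argument breaks, since $\langle\bz_i,\bh_j\rangle$ reappears inside $\hat{\bGamma}$) nor any mechanism for improving the $r\geq 3$ terms, so as written the argument does not prove the theorem; the second-order Wick computation you outline, even if carried out correctly, only controls the smallest part of the bias.
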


\begin{proof}[Proof of Theorem~\ref{thm:linearform}]
Combining Theorem~\ref{thm:xPky} and Theorem~\ref{thm:bias}, we conclude that for any $\bx,\by\in\RR^{m_1}$ with probability at least $1-e^{-t}$ for all $\log8\leq t\leq m_1$, 
\begin{align*}
\big|\big<\hat{\bP}_k^{uu}\bx,\by\big>-&\big<\bP_k^{uu}\bx,\by\big>-\big<\bP_k^{uu}(\EE\hat\bP_k^{uu}-\bP_k^{uu})\bP_k^{uu}\bx,\by\big>\big|\\
&\lesssim \bigg(t^{1/2}\frac{\sigma\mu_1+\sigma^2m_2^{1/2}}{\bar{g}_k(\bA\bA^{\top})}+\frac{\sigma^2m_1\delta(m_1,m_2)}{\bar{g}_k^2(\bA\bA^{\top})}\bigg)\|\bx\|_{\ell_2}\|\by\|_{\ell_2}
\end{align*}
where we used the fact $\frac{\delta(m_1,m_2)}{\bar{g}_k(\bA\bA^{\top})}\leq 1$ and $\nu_k=1$.
Since $\nu_k=1$ such that $\bP_k^{uu}=\bu_k\otimes\bu_k$ and $\hat{\bP}_k^{uu}=\hat{\bu}_k\otimes \hat{\bu}_k$, we can write
$$
\bP_k^{uu}(\EE\hat\bP_k^{uu}-\bP_k^{uu})\bP_k^{uu}=b_k\bP_k^{uu}
$$
where
$$
b_k=\mathbb{E}\langle\hat\bu_k, \bu_k\rangle^2-1\in[-1,0].
$$
Moreover, a simple fact is $b_k\leq \EE\|\hat{\bP}_k^{uu}-\bP_k^{uu}\|\lesssim \frac{\delta(m_1,m_2)}{\bar{g}_k(\bA\bA^{\top})}$ by Wedin's sin$\Theta$ theorem (\cite{wedin1972perturbation}). If $\bar{g}_k(\bA\bA^{\top})\geq D\delta(m_1,m_2)$ for a large enough constant $D>0$, we can ensure $b_k\in[-1/2,0]$. Then, with probability at least $1-e^{-t}$,
$$
\big|\big<\big(\hat{\bP}_k^{uu}-(1+b_k)\bP_k^{uu}\big)\bx,\by\big>\big|
\lesssim \bigg(t^{1/2}\frac{\sigma\mu_1+\sigma^2m_2^{1/2}}{\bar{g}_k(\bA\bA^{\top})}+\frac{\sigma^2m_1\delta(m_1,m_2)}{\bar{g}_k^2(\bA\bA^{\top})}\bigg)\|\bx\|_{\ell_2}\|\by\|_{\ell_2}.
$$
By choosing $\bx=\by=\bu_k$, we obtain for all $\log8\leq t\leq m_1$,
$$
\PP\bigg(\big|\langle\hat\bu_k,\bu_k \rangle^2-(1+b_k)\big|\gtrsim t^{1/2}\frac{\sigma\mu_1+\sigma^2m_2^{1/2}}{\bar{g}_k(\bA\bA^{\top})}+\frac{\sigma^2m_1\delta(m_1,m_2)}{\bar{g}_k^2(\bA\bA^{\top})}\bigg)\leq e^{-t}.
$$
Denote this event by $\calE_1$. Observe that if the constant $D>0$ is large enough and $m_1\ll m_2$, we conclude that on event $\calE_1$, $\langle\hat{\bu}_k, \bu_k\rangle^2\geq \frac{1}{4}$. Then, on event $\calE_1$,
\begin{align*}
\big|\langle\hat{\bu}_k,\bx \rangle-&\sqrt{1+b_k}\langle\bu_k,\bx\rangle\big|\\
\leq& \Big|\frac{1+b_k}{\langle\hat{\bu}_k, \bu_k\rangle}-\sqrt{1+b_k}\Big||\langle\bu_k,\bx \rangle|\\
+&\frac{1}{|\langle\hat{\bu}_k,\bu_k \rangle|}\Big|\langle\hat{\bu}_k,\bu_k\rangle\langle\hat{\bu}_k, \bx\rangle-(1+b_k)\langle\bu_k,\bx \rangle\Big|\\
=&\frac{\sqrt{1+b_k}\big|1+b_k-\langle\hat{\bu}_k, \bu_k\rangle^2\big||\langle\bu_k,\bx\rangle|}{|\langle\hat\bu_k, \bu_k\rangle|\big(\sqrt{1+b_k}+\langle\hat{\bu}_k,\bu_k\rangle\big)}+\frac{1}{|\langle\hat{\bu}_k, \bu_k\rangle|}\big|\big<\big(\hat{\bP}_k^{uu}-(1+b_k)\bP_k^{uu}\big)\bu_k,\bx\big>\big|\\
\lesssim& t^{1/2}\frac{\sigma\mu_1+\sigma^2m_2^{1/2}}{\bar{g}_k(\bA\bA^{\top})}\|\bx\|_{\ell_2}+\frac{\sigma^2m_1}{\bar{g}_k(\bA\bA^{\top})}\bigg(\frac{\delta(m_1,m_2)}{\bar{g}_k(\bA\bA^{\top})}\bigg)\|\bx\|_{\ell_2},
\end{align*}
which concludes the proof after replacing $\bA$ with $\calM_1(\bA)$ and $\mu_1$ with $\|\calM_1(\bA)\|$.
\end{proof}

\begin{proof}[Proof of Theorem~\ref{thm:bias}]
Recall the representation formula of $\hat{\bP}_k^{uu}$ in Lemma~\ref{lemma:hatPk} that 
$$
\mathbb{E}\hat{\bP}_k^{uu}=\bP_k^{uu}+\mathbb{E}\bS_k(\hat{\bGamma})
$$
where $\hat{\bGamma}:=\bA\bZ^{\top}+\bZ\bA^{\top}+\bZ\bZ^{\top}-m_2\sigma^2\bI_{m_1}$. To this end, define
$$
\widetilde{\bGamma}:=\hat{\bGamma}-\big(\bZ\bP_k^{hh}\bZ^{\top}-\nu_k\sigma^2\bI_{m_1}\big)
$$
such that we can write $\EE\hat{\bP}_k^{uu}=\bP_k^{uu}+\EE\bS_k(\widetilde{\bGamma})+\Big(\EE\bS_k(\hat{\bGamma})-\EE\bS_k(\widetilde{\bGamma})\Big)$. We derive an upper bound on $\big\|\EE\bS_k(\widetilde{\bGamma})-\EE\bS_k(\hat{\bGamma})\big\|$ and the proof can be found in the Appendix. Lemma~\ref{lemma:hatS-tildeS} implies that our analysis can be proceeded by replacing $\hat{\bGamma}$ with $\widetilde{\bGamma}$.
\begin{lemma}\label{lemma:hatS-tildeS}
There exists a universal constant $D_1>0$ such that if $m_2e^{-m_1/2}\leq 1$, then
$$
\big\|\EE\bS_k(\widetilde{\bGamma})-\EE\bS_k(\hat{\bGamma})\big\|\leq D_1 \frac{\sigma\mu_1+\sigma^2m_1}{\bar{g}_k(\bA\bA^{\top})}\bigg( \frac{\delta(m_1,m_2)}{\bar{g}_k(\bA\bA^{\top})}\bigg).
$$
\end{lemma}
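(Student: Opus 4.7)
My plan is to expand both $\bS_k(\hat\bGamma)$ and $\bS_k(\widetilde\bGamma)$ via the Kato/Neumann power series for the perturbed spectral projector and exploit the fact that each term is multilinear in the perturbation. More precisely, one has $\bS_k(\bGamma)=\sum_{n\geq 2}\bS_k^{(n)}(\bGamma,\ldots,\bGamma)$, where $\bS_k^{(n)}$ is an $n$-linear form built from $\bP_k^{uu}$ and the operators $\bC_k^{uu}$ and satisfies $\|\bS_k^{(n)}(\bGamma_1,\ldots,\bGamma_n)\|\lesssim \prod_i\|\bGamma_i\|/\bar{g}_k^n(\bA\bA^\top)$. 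Writing $\hat\bGamma=\widetilde\bGamma+\Delta$ with $\Delta:=\bZ\bP_k^{hh}\bZ^\top-\nu_k\sigma^2\bI_{m_1}$, multilinearity yields
\begin{align*}
\bS_k(\hat\bGamma)-\bS_k(\widetilde\bGamma)=\sum_{n\geq 2}\sum_{\emptyset\neq S\subseteq[n]}\bS_k^{(n)}\bigl(\bGamma_1^S,\ldots,\bGamma_n^S\bigr),
\end{align*}
where $\bGamma_i^S=\Delta$ if $i\in S$ and $\bGamma_i^S=\widetilde\bGamma$ otherwise. Using $\sum_{S\subseteq[n]}\prod_i\|\bGamma_i^S\|=(\|\widetilde\bGamma\|+\|\Delta\|)^n$ and geometric summation, the series telescopes to the pointwise bound
\begin{align*}
\|\bS_k(\hat\bGamma)-\bS_k(\widetilde\bGamma)\|\lesssim \frac{\|\Delta\|\bigl(\|\widetilde\bGamma\|+\|\Delta\|\bigr)}{\bar{g}_k^2(\bA\bA^\top)}
\end{align*}
valid on the event $\mathcal{E}:=\{\|\widetilde\bGamma\|+\|\Delta\|\leq \bar{g}_k(\bA\bA^\top)/2\}$.

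Next, I take expectation restricted to $\mathcal{E}$ and apply Cauchy-Schwarz,
\begin{align*}
\bigl\|\EE[\bS_k(\hat\bGamma)-\bS_k(\widetilde\bGamma)]\mathbf{1}_\mathcal{E}\bigr\|\lesssim \frac{\sqrt{\EE\|\Delta\|^2\,\EE\|\widetilde\bGamma\|^2}+\EE\|\Delta\|^2}{\bar{g}_k^2(\bA\bA^\top)}.
\end{align*}
Using the identity $\widetilde\bGamma=\bA\bZ^\top+\bZ\bA^\top+\bigl(\bZ(\bI_{m_2}-\bP_k^{hh})\bZ^\top-(m_2-\nu_k)\sigma^2\bI_{m_1}\bigr)$, Lemma~\ref{lemma:gammabound} applied with $\bB=\bA^\top$ for the cross terms and to the i.i.d.\ Gaussian submatrix $\bZ\bU_{k,\perp}\in\RR^{m_1\times(m_2-\nu_k)}$ for the last term gives $\EE\|\widetilde\bGamma\|^2\lesssim \sigma^2\mu_1^2 m_1+\sigma^4 m_1m_2=\delta^2(m_1,m_2)$. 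For $\Delta$, writing $\bP_k^{hh}=\bU_k\bU_k^\top$ with $\bU_k\in\RR^{m_2\times\nu_k}$ orthonormal reveals $\bZ\bP_k^{hh}\bZ^\top=\bG\bG^\top$ with $\bG:=\bZ\bU_k\in\RR^{m_1\times\nu_k}$ of i.i.d.\ $\calN(0,\sigma^2)$ entries, and Gaussian matrix concentration gives $\EE\|\Delta\|^2\lesssim \sigma^4 m_1^2$. Plugging back and using $m_1\leq m_2$ so that $\sigma^4 m_1^2\leq \sigma^4 m_1^{3/2}m_2^{1/2}\leq \sigma^2 m_1\,\delta(m_1,m_2)$, the main-event contribution is bounded by $\sigma^2 m_1\delta/\bar{g}_k^2(\bA\bA^\top)$, which is itself $\leq (\sigma\mu_1+\sigma^2 m_1)\delta/\bar{g}_k^2(\bA\bA^\top)$, matching the target.

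The main obstacle I anticipate is the exceptional event $\mathcal{E}^c$, where the Neumann expansion no longer converges and the pointwise estimate breaks down. There I fall back on the crude bound $\|\bS_k(\bGamma)\|\leq \|\hat\bP_k^{uu}-\bP_k^{uu}\|+\|\bL_k(\bGamma)\|\lesssim 1+\|\bGamma\|/\bar{g}_k(\bA\bA^\top)$ combined with polynomial moment bounds from Lemma~\ref{lemma:gammabound}. Its tail estimates, applied with deviation parameter $t\asymp m_1$, yield $\PP(\mathcal{E}^c)\lesssim e^{-m_1/2}$, and a Cauchy-Schwarz estimate on $\EE[\|\bS_k(\hat\bGamma)-\bS_k(\widetilde\bGamma)\|^2\mathbf{1}_{\mathcal{E}^c}]^{1/2}$ produces a contribution of order $e^{-m_1/4}\cdot\operatorname{poly}(m_1,m_2,\sigma,\mu_1,\bar{g}_k^{-1})$. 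The hypothesis $m_2 e^{-m_1/2}\leq 1$ is exactly what is needed to dominate this exponentially small tail by the main bound, completing the proof.
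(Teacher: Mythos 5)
Your main-event analysis is essentially the paper's argument in different clothing: the multilinear (Neumann/contour) expansion you use to get $\|\bS_k(\hat\bGamma)-\bS_k(\widetilde\bGamma)\|\lesssim \|\Delta\|\big(\|\widetilde\bGamma\|+\|\Delta\|\big)/\bar{g}_k^2(\bA\bA^{\top})$ on the good event is exactly the Lipschitz property of $\bS_k$ that the paper imports as Lemma~\ref{lemma:Skcon}, and your treatment of $\Delta=\bZ\bP_k^{hh}\bZ^{\top}-\nu_k\sigma^2\bI_{m_1}$ via the $m_1\times\nu_k$ Gaussian matrix $\bG=\bZ\bU_k$ reproduces the paper's covariance-concentration step giving $\EE\|\Delta\|\lesssim\sigma^2 m_1$; the resulting main term $\sigma^2 m_1\,\delta(m_1,m_2)/\bar{g}_k^2(\bA\bA^{\top})$ matches. (Minor quibbles: the Lipschitz bound needs the radius of the good event strictly below $\bar{g}_k/2$ so the geometric series converges, and Lemma~\ref{lemma:gammabound} should be applied with $\bB=\bA$, not $\bA^{\top}$; second moments follow by integrating its tails.)

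The genuine gap is in your handling of the exceptional event $\mathcal{E}^c$. Your crude bound $\|\bS_k(\bGamma)\|\lesssim 1+\|\bGamma\|/\bar{g}_k(\bA\bA^{\top})$ contains an additive constant that does not scale with $\sigma$, so after Cauchy--Schwarz the tail contribution is of order $e^{-cm_1}$ times an absolute constant. The target bound $(\sigma\mu_1+\sigma^2 m_1)\,\delta(m_1,m_2)/\bar{g}_k^2(\bA\bA^{\top})$, however, vanishes as $\sigma\to 0$ with $m_1,m_2,\mu_1,\bar{g}_k$ fixed, so a $\sigma$-free term of size $e^{-cm_1}$ is \emph{not} dominated by it in general; the hypothesis $m_2e^{-m_1/2}\leq 1$ cannot rescue this, contrary to your closing claim. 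The paper avoids the problem by using the unconditional quadratic bound $\|\bS_k(\bGamma)\|\leq 14\,\|\bGamma\|^2/\bar{g}_k^2(\bA\bA^{\top})$ from Lemma~\ref{lemma:hatPk} on the bad events, so the tail term is of order $\big(\delta(m_1,m_2)/\bar{g}_k(\bA\bA^{\top})\big)^4 e^{-m_1/2}$, which carries the needed powers of $\sigma$; then $\delta(m_1,m_2)e^{-m_1/2}\lesssim \sigma\mu_1+\sigma^2 m_1$ follows precisely from $m_2e^{-m_1/2}\leq 1$, and the tail is absorbed into the main term. Replacing your crude bound by this quadratic bound (or otherwise extracting a factor of $\delta^2$ on $\mathcal{E}^c$) repairs the proof; as written, the last step does not go through.
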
 
Let $\delta_t=\EE\|\hat{\bGamma}\|+D_1\sigma\mu_1 t^{1/2}+D_2\sigma^2m_2^{1/2}t^{1/2}$ for $0<t\leq m_1$ to be determined later and large enough constants $D_1,D_2>0$ such that $\PP\big(\|\hat{\bGamma}\|\geq \delta_t\big)\leq e^{-t}$. We write
\begin{align}
\EE\hat{\bP}_k^{uu}&-\bP_k^{uu}-\bP_k^{uu}\EE\bS_k(\widetilde{\bGamma})\bP_k^{uu}\nonumber\\
=&\EE\bS_{k}(\hat{\bGamma})-\EE\bS_k(\widetilde{\bGamma})\nonumber\\
+&\EE\Big(\bP_k^{uu}\bS_k(\widetilde{\bGamma})(\bP_k^{uu})^{\perp}+(\bP_k^{uu})^{\perp}\bS_k(\widetilde{\bGamma})\bP_k^{uu}+(\bP_k^{uu})^{\perp}\bS_k(\widetilde{\bGamma})(\bP_k^{uu})^{\perp}\Big){\bf 1}\big(\|\widetilde{\bGamma}\|\leq \delta_t\big)\nonumber\\
+&\EE\Big(\bP_k^{uu}\bS_k(\widetilde{\bGamma})(\bP_k^{uu})^{\perp}+(\bP_k^{uu})^{\perp}\bS_k(\widetilde{\bGamma})\bP_k^{uu}+(\bP_k^{uu})^{\perp}\bS_k(\widetilde{\bGamma})(\bP_k^{uu})^{\perp}\Big){\bf 1}\big(\|\widetilde{\bGamma}\|> \delta_t\big).\label{eq:hatP-P}
\end{align}
We prove an upper bound for $\mathbb{E}\big\langle\bx, (\bP_{k}^{uu})^{\perp}\bS_k(\widetilde{\bGamma})\bP_k^{uu}\by\big\rangle{\bf 1}\big(\|\widetilde{\bGamma}\|\leq\delta_t\big)$ for $\bx,\by\in\RR^{m_1}$. Similar to the approach in \cite{koltchinskii2016perturbation}, under the assumption $\|\widetilde{\bGamma}\|\leq \delta_t$, $\bS_k(\widetilde{\bGamma})$ is represented in the following analytic form,
$$
\bS_k(\widetilde{\bGamma})=-\frac{1}{2\pi i}\oint_{\gamma_k}\sum_{r\geq 2}(-1)^r\Big(\bR_{\bA\bA^{\top}}(\eta)\widetilde{\bGamma}\Big)^r\bR_{\bA\bA^{\top}}(\eta)d\eta
$$
where $\gamma_k$ is a circle on the complex plane with center $\mu_k^2$ and radius $\frac{\bar{g}_k(\bA\bA^{\top})}{2}$, and $\bR_{\bA\bA^{\top}}(\eta)$ is the resolvent of the operator $\bA\bA^{\top}$ with $\bR_{\bA\bA^{\top}}(\eta)=(\bA\bA^{\top}-\eta\bI_{m_1})^{-1}$ 
which can be explicitly written as
$$
\bR_{\bA\bA^{\top}}(\eta):=(\bA\bA^{\top}-\eta\bI_{m_1})^{-1}=\sum_{s}\frac{1}{\mu_s^2-\eta}\bP_s^{uu}.
$$
We also denote
$$
\widetilde{\bR}_{\bA\bA^{\top}}(\eta):={\bR}_{\bA\bA^{\top}}(\eta)-\frac{1}{\mu_k^2-\eta}\bP_k^{uu}=\sum_{s\neq k}\frac{1}{\mu_s^2-\eta}\bP_s^{uu}.
$$
It is easy to check that 
\begin{align*}
(\bP_k^{uu})^{\perp}	&\big(\bR_{\bA\bA^{\top}}(\eta)\widetilde{\bGamma}\big)^r\bR_{\bA\bA^{\top}}(\eta)\bP_k^{uu}\\
=&(\bP_k^{uu})^{\perp}\big(\bR_{\bA\bA^{\top}}(\eta)\widetilde{\bGamma}\big)^r\frac{1}{\mu_k^2-\eta}\bP_k^{uu}\\
&=\bigg(\frac{1}{(\mu_k^2-\eta)^2}
\sum_{s=2}^r\big(\widetilde{\bR}_{\bA\bA^{\top}}(\eta)\widetilde{\bGamma}\big)^{s-1}\big(\bP_k^{uu}\widetilde{\bGamma}\big)\big(\bR_{\bA\bA^{\top}}(\eta)\widetilde{\bGamma}\big)^{r-s}\bP_k^{uu}\bigg)\\
&+\frac{1}{\mu_k^2-\eta}\big(\widetilde{\bR}_{\bA\bA^{\top}}(\eta)\widetilde{\bGamma}\big)^{r}\bP_k^{uu},
\end{align*}
where we used the formula $(a+b)^r=b^r+\sum_{s=1}^rb^{s-1}a(a+b)^{r-s}$.
As a result, 
\begin{align}
&(\bP_k^{uu})^{\perp}\bS_k(\widetilde{\bGamma})\bP_k^{uu}\nonumber\\
&=-\sum_{r\geq 2}(-1)^r\frac{1}{2\pi i}\oint_{\gamma_k}\bigg(\frac{1}{(\mu_k^2-\eta)^2}
\sum_{s=2}^r\big(\widetilde{\bR}_{\bA\bA^{\top}}(\eta)\widetilde{\bGamma}\big)^{s-1}\big(\bP_k^{uu}\widetilde{\bGamma}\big)\big(\bR_{\bA\bA^{\top}}(\eta)\widetilde{\bGamma}\big)^{r-s}\bP_k^{uu}\nonumber\\
&\quad +\frac{1}{\mu_k^2-\eta}\big(\widetilde{\bR}_{\bA\bA^{\top}}(\eta)\widetilde{\bGamma}\big)^{r}\bP_k^{uu}\bigg)d\eta.\label{eq:PkSkPk}
\end{align}
For any $\bx,\by\in\RR^{m_1}$, we shall derive an upper bound for
$$
\EE\Big<\bx, \big(\widetilde{\bR}_{\bA\bA^{\top}}(\eta)\widetilde{\bGamma}\big)^{s-1}\big(\bP_k^{uu}\widetilde{\bGamma}\big)\big(\bR_{\bA\bA^{\top}}(\eta)\widetilde{\bGamma}\big)^{r-s}
\bP_k^{uu}\by\Big>{\bf 1}\big(\|\widetilde{\bGamma}\|\leq \delta_t\big),\quad s=2,\ldots,r.
$$
Recall that $\rank(\bP_k^{uu})=\nu_k$ and $\bP_k^{uu}=\sum_{j\in\Delta_k}\bu_{j}\otimes \bu_{j}$. Then,
\begin{align*}
 \Big<\bx, \big(\widetilde{\bR}_{\bA\bA^{\top}}&(\eta)\widetilde{\bGamma}\big)^{s-1}\big(\bP_k^{uu}\widetilde{\bGamma}\big)\big(\bR_{\bA\bA^{\top}}(\eta)\widetilde{\bGamma}\big)^{r-s}\bP_k^{uu}\by\Big>\\
  =& \sum_{j\in\Delta_k}\Big<\bx,\big(\widetilde{\bR}_{\bA\bA^{\top}}(\eta)\widetilde{\bGamma}\big)^{s-1}\big(\bu_{j}\otimes \bu_{j}\widetilde{\bGamma}\big)\big(\bR_{\bA\bA^{\top}}(\eta)\widetilde{\bGamma}\big)^{r-s}\bP_k^{uu}\by\Big>\\
 =&\sum_{j\in\Delta_k}\big<\widetilde{\bGamma}\big(\bR_{\bA\bA^{\top}}(\eta)\widetilde{\bGamma}\big)^{r-s}\bP_k^{uu}\by,\bu_{j}\big> \big<\big(\widetilde{\bR}_{\bA\bA^{\top}}(\eta)\widetilde{\bGamma}\big)^{s-2}\widetilde{\bR}_{\bA\bA^{\top}}(\eta)\widetilde{\bGamma}\bu_{j},\bx\big>.
 \end{align*}
Observe that 
\begin{align*}
\big|\big<\widetilde{\bGamma}\big(\bR_{\bA\bA^{\top}}(\eta)\widetilde{\bGamma}\big)^{r-s}\bP_k^{uu}\by,\bu_{j}\big>\big|\leq& \|\bR_{\bA\bA^{\top}}(\eta)\|^{r-s}\|\widetilde{\bGamma}\|^{r-s+1}\|\by\|_{\ell_2}\\
\leq& \Big(\frac{2}{\bar{g}_k(\bA\bA^{\top})}\Big)^{(r-s)}\|\widetilde{\bGamma}\|^{r-s+1}\|\by\|_{\ell_2}.
\end{align*}
Therefore,
\begin{align}
 &\mathbb{E}\Big<\bx, \big(\widetilde{\bR}_{\bA\bA^{\top}}(\eta)\widetilde{\bGamma}\big)^{s-1}\big(\bP_k^{uu}\widetilde{\bGamma}\big)\big(\bR_{\bA\bA^{\top}}(\eta)\widetilde{\bGamma}\big)^{r-s}\bP_k^{uu}\by\Big>{\bf 1}\Big(\|\widetilde{\bGamma}\|\leq \delta_t\Big)\nonumber\\
 &=\sum_{j\in\Delta_k}\mathbb{E}\big<\widetilde{\bGamma}\big(\bR_{\bA\bA^{\top}}(\eta)\widetilde{\bGamma}\big)^{r-s}\bP_k^{uu}\by,\bu_{j}\big> \big<\big(\widetilde{\bR}_{\bA\bA^{\top}}(\eta)\widetilde{\bGamma}\big)^{s-1}\bu_{j},\bx\big>{\bf 1}\Big(\|\widetilde{\bGamma}\|\leq \delta_t\Big)\nonumber\\
 &\leq \sum_{j\in\Delta_k}\EE^{1/2}\Big|\big<\widetilde{\bGamma}\big(\bR_{\bA\bA^{\top}}(\eta)\widetilde{\bGamma}\big)^{r-s}\bP_k^{uu}\by,\bu_{j}\big>{\bf 1}\Big(\|\widetilde{\bGamma}\|\leq \delta_t\Big)\Big|^2\nonumber\\
 &\hspace{3cm}\times\EE^{1/2}\Big|\big<\big(\widetilde{\bR}_{\bA\bA^{\top}}(\eta)\widetilde{\bGamma}\big)^{s-1}\bu_{j},\bx\big>{\bf 1}\Big(\|\widetilde{\bGamma}\|\leq \delta_t\Big)\Big|^2\nonumber\\
 &\leq \Big(\frac{2\delta_t}{\bar{g}_k(\bA\bA^{\top})}\Big)^{r-s}\delta_t\|\by\|_{\ell_2}\sum_{j\in\Delta_k}\EE^{1/2}\Big|\big<\big(\widetilde{\bR}_{\bA\bA^{\top}}(\eta)\widetilde{\bGamma}\big)^{s-2}\widetilde{\bR}_{\bA\bA^{\top}}(\eta)\widetilde{\bGamma}\bu_{j},\bx\big>{\bf 1}\Big(\|\widetilde{\bGamma}\|\leq \delta_t\Big)\Big|^2. \label{eq:xRy}
\end{align}
It then remains to bound, for each $j\in\Delta_k$,
$$
\mathbb{E}^{1/2}\Big|\Big<\big(\widetilde{\bR}_{\bA\bA^{\top}}(\eta)\widetilde{\bGamma}\big)^{s-2}\widetilde{\bR}_{\bA\bA^{\top}}(\eta)\widetilde{\bGamma}\bu_{j},\bx\Big>\Big|^2{\bf 1}\Big(\|\widetilde{\bGamma}\|\leq \delta_t\Big).
$$
Recall that we can write
$$
\widetilde{\bGamma}=\bA\bZ^{\top}+\bZ\bA^{\top}+\bZ\sum_{k'\neq k}\bP_{k'}^{hh}\bZ^{\top}-\sigma^2(m_2-\nu_k)\bI_{m_1}
$$
and correspondingly
$$
\widetilde{\bGamma}\bu_{j}=\bA\bZ^{\top}\bu_{j}+\bZ\bA^{\top}\bu_{j}+\bZ\sum_{k'\neq k}\bP_{k'}^{hh}\bZ^{\top}\bu_{j}-\sigma^2(m_2-\nu_k)\bu_{j}.
$$
We write
\begin{align}
\Big<\big(\widetilde{\bR}_{\bA\bA^{\top}}&(\eta)\widetilde{\bGamma}\big)^{s-2}\widetilde{\bR}_{\bA\bA^{\top}}(\eta)\widetilde{\bGamma}\bu_{j},\bx\Big>\nonumber\\
=&\Big<\big(\widetilde{\bR}_{\bA\bA^{\top}}(\eta)\widetilde{\bGamma}\big)^{s-2}\widetilde{\bR}_{\bA\bA^{\top}}(\eta)\bZ\bA^{\top}\bu_{j},\bx\Big>\label{eq:ZA}\\
&+\Big<\big(\widetilde{\bR}_{\bA\bA^{\top}}(\eta)\widetilde{\bGamma}\big)^{s-2}\widetilde{\bR}_{\bA\bA^{\top}}(\eta)\bA\bZ^{\top}\bu_{j},\bx\Big>\label{eq:AZ}\\
&+\Big<\big(\widetilde{\bR}_{\bA\bA^{\top}}(\eta)\widetilde{\bGamma}\big)^{s-2}\widetilde{\bR}_{\bA\bA^{\top}}(\eta)\Big(\bZ\sum_{k'\neq k}\bP_{k'}^{hh}\bZ^{\top}\bu_{j}-\sigma^2(m_2-\nu_k)\bu_{j}\Big),\bx\Big>.\label{eq:ZZ}
\end{align}
The upper bounds of (\ref{eq:ZA}), (\ref{eq:AZ}), and (\ref{eq:ZZ})  shall be obtained separately via different representations.

\paragraph{Bound of\ $\mathbb{E}^{1/2}\big|\big<\big(\widetilde{\bR}_{\bA\bA^{\top}}(\eta)\widetilde{\bGamma}\big)^{s-2}\widetilde{\bR}_{\bA\bA^{\top}}(\eta)\bZ\bA^{\top}\bu_{j},\bx\big>\big|^2{\bf 1}\Big(\|\widetilde{\bGamma}\|\leq \delta_t\Big)$.} Observe that $\bA^{\top}\bu_{j}=\mu_k\bh_{j}\in\RR^{m_2}$ for $j\in\Delta_k$ such that
$$
\bZ\bA^{\top}\bu_{j}=\mu_k\bZ\bh_{j}=\mu_k\sum_{i=1}^{m_1}\langle \bz_i, \bh_{j}\rangle\be_i
$$
where $\{\be_1,\ldots,\be_{m_1}\}$ denote the canonical basis vectors in $\RR^{m_1}$ and $\{\bz_1^{\top},\ldots,\bz_{m_1}^{\top}\}$ denote the rows of $\bZ$. Therefore,
\begin{align*}
\big<\big(\widetilde{\bR}_{\bA\bA^{\top}}(\eta)\widetilde{\bGamma}\big)^{s-2}\widetilde{\bR}_{\bA\bA^{\top}}(\eta)&\bZ\bA^{\top}\bu_{j},\bx\big>\\
=&\mu_k\sum_{i=1}^{m_1}\langle\bz_i,\bh_{j}\rangle\big<\big(\widetilde{\bR}_{\bA\bA^{\top}}(\eta)\widetilde{\bGamma}\big)^{s-2}\widetilde{\bR}_{\bA\bA^{\top}}(\eta)\be_i,\bx\big>.
\end{align*}
It is clear that $\langle\bz_i, \bh_{j}\rangle, i=1,\ldots,m_1$ are i.i.d. and $\langle\bz_i,\bh_{j}\rangle\sim\calN(0,\sigma^2)$. Recall that $\widetilde{\bR}_{\bA\bA^{\top}}(\eta)=\sum_{k'\neq k}\frac{\bP_{k'}^{uu}}{\mu_{k'}^2-\eta}$, implying that $\big(\widetilde{\bR}_{\bA\bA^{\top}}(\eta)\widetilde{\bGamma}\big)^{s-2}\widetilde{\bR}_{\bA\bA^{\top}}(\eta)$ can be viewed as a linear combination of operators
$$
(\bP_{t_1}^{uu}\widetilde{\bGamma}\bP_{t_2}^{uu})(\bP_{t_2}^{uu}\widetilde{\bGamma}\bP_{t_3}^{uu})\ldots(\bP_{t_{s-2}}^{uu}\widetilde{\bGamma}\bP_{t_{s-1}}^{uu})
$$
where $t_1,\ldots,t_{s-1}\neq k$. For each $\bP_{t_1}^{uu}\widetilde{\bGamma}\bP_{t_2}^{uu}$, we have
$$
\bP_{t_1}^{uu}\widetilde{\bGamma}\bP_{t_2}^{uu}=\bP_{t_1}^{uu}\bA\bZ^{\top}\bP_{t_2}^{uu}+\bP_{t_1}^{uu}\bZ\bA^{\top}\bP_{t_2}^{uu}+\bP_{t_1}^{uu}\big(\bZ\sum_{k'\neq k}\bP_{k'}^{hh}\bZ^{\top}\big)\bP_{t_2}^{uu}-\sigma^2(m_2-\nu_k)\bP_{t_1}^{uu}\bP_{t_2}^{uu}.
$$
Clearly, $\bP_{t_1}^{uu}\bA\bZ^{\top}$ is a function of random vectors $\bP_{t_1}^{uu}\bA\bz_i, i=1,\ldots,m_1$; $\bZ\bA^{\top}\bP_{t_2}^{uu}$ is a function of random vectors $\bP_{t_2}^{uu}\bA\bz_i, i=1,\ldots,m_1$; $\bZ\sum_{k'\neq k}\bP_{k'}^{hh}\bZ^{\top}=\bZ\sum_{k'\neq k}(\bP_{k'}^{hh})^2\bZ^{\top}$ is a function of random vectors $\bP_{k'}^{hh}\bz_i, i=1,\ldots,m_1$. The following facts are obvious
$$
\mathbb{E}\langle \bz_i, \bh_{j}\rangle \bP_{t_1}^{uu}\bA\bz_i=\bP_{t_1}^{uu}\bA(\mathbb{E}\bz_i\otimes \bz_i)\bh_{j}=\sigma^2\bP_{t_1}^{uu}\bA\bh_{j}=\sigma^2\mu_k\bP_{t_1}^{uu}\bu_{j}={\bf 0},\quad \forall t_1\neq k
$$
and
$$
\mathbb{E}\langle\bz_i,\bh_{j} \rangle\bP_{k'}^{hh}\bz_i=\bP_{k'}^{hh}(\mathbb{E}\bz_i\otimes \bz_i)\bh_{j}=\sigma^2\bP_{k'}^{hh}\bh_{j}={\bf 0},\quad \forall k'\neq k.
$$
Since $\big\{\langle\bz_i,\bh_{j}\rangle, i=1,\ldots, m_1\big\}$ are Gaussian random variables and $\big\{\bP_{t_1}^{uu}\bA\bz_i, \bP_{k'}^{hh}\bz_i, i=1,\ldots, m_1\big\}$ are (complex) Gaussian random vectors, uncorrelations indicate that $\big\{\langle\bz_i,\bh_{j} \rangle: i=1,\ldots,m_1\big\}$ are independent with $\big\{\bP_{t_1}^{uu}\bA\bz_i, \bP_{k'}^{hh}\bz_i: t_1\neq k, k'\neq k, i=1,\ldots,m_1\big\}$. We conclude that $\big\{\langle\bz_i,\bh_{j} \rangle: i=1,\ldots,m_1\big\}$ are independent with 
$$
\big\{\big<\big(\widetilde{\bR}_{\bA\bA^{\top}}(\eta)\widetilde{\bGamma}\big)^{s-2}\widetilde{\bR}_{\bA\bA^{\top}}(\eta)\be_i, \bx\big>, i=1,\ldots,m_1\big\}.
$$ 
To this end, define the complex random variables
$$
\omega_i(\bx)=\big<\big(\widetilde{\bR}_{\bA\bA^{\top}}(\eta)\widetilde{\bGamma}\big)^{s-2}\widetilde{\bR}_{\bA\bA^{\top}}(\eta)\be_i, \bx\big>=\omega_i^{(1)}(\bx)+\omega_i^{(2)}(\bx){\rm Im}\in \CC,\quad i=1,\ldots,m_1
$$
where ${\rm Im}$ denotes the imaginary number.
Then,
\begin{align*}
\big<\big(\widetilde{\bR}_{\bA\bA^{\top}}(\eta)\widetilde{\bGamma}\big)^{s-2}\widetilde{\bR}_{\bA\bA^{\top}}(\eta)&\bZ\bA^{\top}\bu_{j},\bx\big> \\
&=\mu_k\sum_{i=1}^{m_1}\langle\bz_i,\bh_{j} \rangle\omega_i^{(1)}(\bx)+\Big(\mu_k\sum_{i=1}^{m_1}\langle\bz_i,\bh_{j} \rangle\omega_i^{(2)}(\bx)\Big){\rm Im}\\
& =:\kappa_1(\bx)+\kappa_2(\bx){\rm Im}\in \CC.
\end{align*}
Conditioned on $\big\{\bP_{t_1}^{uu}\bA\bz_i, \bP_{k'}^{hh}\bz_i: t_1\neq k, k'\neq k, i=1,\ldots,m_1\big\}$, we get
$$
\mathbb{E}\kappa_1^2(\bx)=\mu_k^2\sigma^2\sum_{i=1}^{m_1}\Big(\omega_i^{(1)}(\bx)\Big)^2
$$
and
$$
\mathbb{E}\kappa_1(\bx)\kappa_2(\bx)=\mu_k^2\sigma^2\sum_{i=1}^{m_1}\omega_i^{(1)}(\bx)\omega_i^{(2)}(\bx)
$$
implying that
the centered Gaussian random vector $(\kappa_1(\bx), \kappa_2(\bx))$ has covariance matrix:
$$
\bigg(\mu_k^2\sigma^2\sum_{i=1}^{m_1}\omega_i^{(k_1)}(\bx)\omega_i^{(k_2)}(\bx)\bigg)_{k_1,k_2=1,2}.
$$
Finally, 
\begin{align*}
\EE^{1/2}\big|\big<\big(\widetilde{\bR}_{\bA\bA^{\top}}(\eta)&\widetilde{\bGamma}\big)^{s-2}\widetilde{\bR}_{\bA\bA^{\top}}(\eta)\bZ\bA^{\top}\bu_{j},\bx\big>\big|^2{\bf 1}\Big(\|\widetilde{\bGamma}\|\leq \delta_t\Big)\\
=&\mathbb{E}^{1/2}\big(\kappa_1^2(\bx)+\kappa_2^2(\bx)\big){\bf 1}\Big(\|\widetilde{\bGamma}\|\leq \delta_t\Big)\\
&=\sigma\mu_k\mathbb{E}^{1/2}\Big(\sum_{i=1}^{m_1}\big(\omega_i^{(1)}(\bx)\big)^2+\big(\omega_i^{(2)}(\bx)\big)^2\Big){\bf 1}\Big(\|\widetilde{\bGamma}\|\leq \delta_t\Big)\\
&=\sigma\mu_k\mathbb{E}^{1/2}\sum_{i=1}^{m_1}\big|\omega_i(\bx)\big|^2{\bf 1}\Big(\|\widetilde{\bGamma}\|\leq \delta_t\Big).
\end{align*}
Moreover,
\begin{eqnarray*}
\sum_{i=1}^{m_1}\big|\omega_i(\bx)\big|^2=\sum_{i=1}^{m_1}\big|\big<\widetilde{\bR}_{\bA\bA^{\top}}(\eta)\big(\widetilde{\bR}_{\bA\bA^{\top}}(\eta)\widetilde{\bGamma}\big)^{s-2}\bx, \be_j\big>\big|^2\leq \big\|\widetilde{\bR}_{\bA\bA^{\top}}(\eta)\big(\widetilde{\bR}_{\bA\bA^{\top}}(\eta)\widetilde{\bGamma}\big)^{s-2}\bx\big\|_{\ell_2}^2\\
\leq \|\widetilde{\bR}_{\bA\bA^{\top}}(\eta)\|^{2(s-1)}\|\widetilde{\bGamma}\|^{2(s-2)}\|\bx\|_{\ell_2}^2\leq \Big(\frac{2}{\bar{g}_k(\bA\bA^{\top})}\Big)^{2(s-1)}\|\widetilde{\bGamma}\|^{2(s-2)}\|\bx\|_{\ell_2}^2.
\end{eqnarray*}
As a result,
\begin{align*}
\EE^{1/2}\big|\big<\big(\widetilde{\bR}_{\bA\bA^{\top}}&(\eta)\widetilde{\bGamma}\big)^{s-2}\widetilde{\bR}_{\bA\bA^{\top}}(\eta)\bZ\bA^{\top}\bu_{j},\bx\big>\big|^2{\bf 1}\Big(\|\widetilde{\bGamma}\|\leq \delta_t \Big)\\
 &\leq \sigma\mu_k\EE^{1/2} \Big(\frac{2}{\bar{g}_k(\bA\bA^{\top})}\Big)^{2(s-1)}\|\widetilde{\bGamma}\|^{2(s-2)}\|\bx\|_{\ell_2}^2{\bf 1}\Big(\|\widetilde{\bGamma}\|\leq \delta_t \Big)\\
&\leq \frac{\sigma\mu_k}{\bar{g}_k(\bA\bA^{\top})}\Big(\frac{2\delta_t}{\bar{g}_k(\bA\bA^{\top})}\Big)^{s-2}\|\bx\|_{\ell_2}.
\end{align*}

\paragraph{Bound of $\ \EE^{1/2}\big|\big<\big(\widetilde{\bR}_{\bA\bA^{\top}}(\eta)\widetilde{\bGamma}\big)^{s-2}\widetilde{\bR}_{\bA\bA^{\top}}(\eta)\bA\bZ^{\top}\bu_{j}, \bx\big>\big|^2{\bf 1}\big(\|\widetilde{\bGamma}\|\leq \delta_t\big)$.} With a little abuse on the notations, we denote by $\bz_1,\ldots,\bz_{m_2}\in \RR^{m_1}$ the corresponding columns of $\bZ$ in this paragraph. Then,
$$
\big<\big(\widetilde{\bR}_{\bA\bA^{\top}}(\eta)\widetilde{\bGamma}\big)^{s-2}\widetilde{\bR}_{\bA\bA^{\top}}(\eta)\bA\bZ^{\top}\bu_{j}, \bx\big>=\sum_{i=1}^{m_2}\langle\bz_i, \bu_{j}\rangle\Big<\big(\widetilde{\bR}_{\bA\bA^{\top}}(\eta)\widetilde{\bGamma}\big)^{s-2}\widetilde{\bR}_{\bA\bA^{\top}}(\eta)\bA\be_i,\bx\Big>.
$$
Similarly, $\big(\widetilde{\bR}_{\bA\bA^{\top}}(\eta)\widetilde{\bGamma}\big)^{s-2}\widetilde{\bR}_{\bA\bA^{\top}}(\eta)$ can be represented as linear combination of operators
$$
\big(\bP_{t_1}^{uu}\widetilde{\bGamma}\bP_{t_2}^{uu}\big)\big(\bP_{t_2}^{uu}\widetilde{\bGamma}\bP_{t_3}^{uu}\big)\ldots\big(\bP_{t_{s-2}}^{uu}\widetilde{\bGamma}\bP_{t_{s-1}}^{uu}\big),\quad t_1,\ldots,t_{s-1}\neq k.
$$
To this end, we write
$$
\bP_{t_1}^{uu}\widetilde{\bGamma}\bP_{t_2}^{uu}=\bP_{t_1}^{uu}\bA\bZ^{\top}\bP_{t_2}^{uu}+\bP_{t_1}^{uu}\bZ\bA^{\top}\bP_{t_2}^{uu}+\bP_{t_1}^{uu}\big(\bZ\sum_{k'\neq k}\bP_{k'}^{hh}\bZ^{\top}\big)\bP_{t_2}^{uu}-\sigma^2(m_2-\nu_k)\bP_{t_1}^{uu}\bP_{t_2}^{uu}.
$$
Observe that $\bP_{t_1}^{uu}\bA\bZ^{\top}\bP_{t_2}^{uu}$, $\bP_{t_1}^{uu}\bZ\bA^{\top}\bP_{t_2}^{uu}$ and $\bP_{t_1}^{uu}\big(\bZ\sum_{k'\neq k}\bP_{k'}^{hh}\bZ^{\top}\big)\bP_{t_2}^{uu}$ are functions of random vectors $\{\bP_{t_1}^{uu}\bz_i, \bP_{t_2}^{uu}\bz_i:  t_1, t_2\neq k, i=1,\ldots,m_2\}$. Moreover,
$$
\mathbb{E}\langle\bz_i, \bu_{j}\rangle\bP_{t_1}^{uu}\bz_i=\bP_{t_1}^{uu}\big(\EE \bz_i\otimes \bz_i\big)\bu_{j}=\sigma^2\bP_{t_1}^{uu}\bu_{j}={\bf 0},\quad \forall\ t_1\neq k
$$
which implies that $\{\langle\bz_i, \bu_{j}\rangle: i=1,\ldots,m_2\}$ and $\Big\{\big<\big(\widetilde{\bR}_{\bA\bA^{\top}}(\eta)\widetilde{\bGamma}\big)^{s-2}\widetilde{\bR}_{\bA\bA^{\top}}(\eta)\bA\be_i,\bx\big>: i=1,\ldots,m_2\Big\}$ are independent. Following an identical analysis as above, we get
$$
\EE^{1/2}\big|\big<\big(\widetilde{\bR}_{\bA\bA^{\top}}(\eta)\widetilde{\bGamma}\big)^{s-2}\widetilde{\bR}_{\bA\bA^{\top}}(\eta)\bA\bZ^{\top}\bu_{j}, \bx\big>\big|^2{\bf 1}\big(\|\widetilde{\bGamma}\|\leq \delta_t\big)\leq \frac{\sigma \mu_1}{\bar{g}_k(\bA\bA^{\top})}\Big(\frac{2\delta_t}{\bar{g}_k(\bA\bA^{\top})}\Big)^{s-2}\|\bx\|_{\ell_2}.
$$

\paragraph{Bound of $\ \EE^{1/2}\Big|\Big<\big(\widetilde{\bR}_{\bA\bA^{\top}}(\eta)\widetilde{\bGamma}\big)^{s-2}\widetilde{\bR}_{\bA\bA^{\top}}(\eta)\big(\bZ\sum_{k'\neq k}\bP_{k'}^{hh}\bZ^{\top}\big)\bu_{j},\bx\Big>\Big|^2{\bf 1}\big(\|\widetilde{\bGamma}\|\leq \delta_t\big)$.} Note that we used the fact $\widetilde{\bR}_{\bA\bA^{\top}}(\eta)\bu_{j}={\bf 0}$ in (\ref{eq:ZZ}). Again, let $\{\bz_1,\ldots,\bz_{m_2}\}\subset \RR^{m_1}$ denote the corresponding columns of $\bZ$. We write
\begin{align*}
\big<\big(\widetilde{\bR}_{\bA\bA^{\top}}(\eta)\widetilde{\bGamma}\big)^{s-2}\widetilde{\bR}_{\bA\bA^{\top}}(\eta)&\big(\bZ\sum_{k'\neq k}\bP_{k'}^{hh}\bZ^{\top}\big)\bu_{j},\bx\big>\\
=&\sum_{i=1}^{m_2}\langle\bz_i,\bu_{j} \rangle\big<\big(\widetilde{\bR}_{\bA\bA^{\top}}(\eta)\widetilde{\bGamma}\big)^{s-2}\widetilde{\bR}_{\bA\bA^{\top}}(\eta)\bZ\big(\sum_{k'\neq k}\bP_{k'}^{hh}\big)\be_i,\bx\big>.
\end{align*}
In a similar fashion, we show that $\big(\widetilde{\bR}_{\bA\bA^{\top}}(\eta)\widetilde{\bGamma}\big)^{s-2}\widetilde{\bR}_{\bA\bA^{\top}}(\eta)\bZ$ is a function of random vectors $\big\{\bP^{uu}_{t}\bz_i: t\neq k, i=1,\ldots,m_2\big\}$ which are independent with $\big\{\langle\bz_i, \bu_{j}\rangle: i=1,\ldots,m_2\big\}$. Then,
\begin{align*}
\EE^{1/2}\Big|\Big<\big(\widetilde{\bR}_{\bA\bA^{\top}}&(\eta)\widetilde{\bGamma}\big)^{s-2}\widetilde{\bR}_{\bA\bA^{\top}}(\eta)\big(\bZ\sum_{k'\neq k}\bP_{k'}^{hh}\bZ^{\top}\big)\bu_{j},\bx\Big>\Big|^2{\bf 1}\big(\|\widetilde{\bGamma}\|\leq \delta\big)\\
\leq& \mathbb{E}^{1/2}\sigma^2\|\widetilde{\bR}_{\bA\bA^{\top}}(\eta)\|^{2(s-1)}\|\widetilde{\bGamma}\|^{2(s-2)}\|\bZ\sum_{k'\neq k}\bP_{k'}^{hh}\|^2\|\bx\|_{\ell_2}^2{\bf 1}\big(\|\widetilde{\bGamma}\|\leq \delta_t\big)\\
\lesssim& \frac{\sigma^2m_2^{1/2}}{\bar{g}_k(\bA\bA^{\top})}\Big(\frac{\delta_t}{\bar{g}_k(\bA\bA^{\top})}\Big)^{s-2}\|\bx\|_{\ell_2}.
\end{align*}
where we used the fact $\mathbb{E}^{1/2}\big\|(\sum_{k'\neq k}\bP_{k'}^{hh})\bZ^{\top}\big\|^2\lesssim \sigma m_2^{1/2}$ from Lemma~\ref{lemma:gammabound}.

\paragraph{Finalize the proof of Theorem.} Combining the above bounds into (\ref{eq:AZ}), (\ref{eq:ZA}) and (\ref{eq:ZZ}), we conclude that
\begin{align*}
\mathbb{E}^{1/2}\Big|\big<\big(\widetilde{\bR}_{\bA\bA^{\top}}&(\eta)\widetilde{\bGamma}\big)^{s-2}\widetilde{\bR}_{\bA\bA^{\top}}(\eta)\widetilde{\bGamma}\bu_{j},\bx\big>\Big|^2{\bf 1}\big(\|\widetilde{\bGamma}\|\leq \delta_t\big)\\
&\lesssim \frac{\sigma^2m_2^{1/2}+\sigma\mu_1}{\bar{g}_k(\bA\bA^{\top})}\Big(\frac{2\delta_t}{\bar{g}_k(\bA\bA^{\top})}\Big)^{s-2}\|\bx\|_{\ell_2}.
\end{align*}
Continue from (\ref{eq:xRy}) and we end up with
\begin{align*}
\mathbb{E}\big<\bx, \big(\widetilde{\bR}_{\bA\bA^{\top}}(\eta)\widetilde{\bGamma}\big)^{s-1}(\bP_k^{uu}\widetilde{\bGamma})&\big(\bR_{\bA\bA^{\top}}(\eta)\widetilde{\bGamma}\big)^{r-s}\bP_k^{uu}\by\big>{\bf 1}\big(\|\widetilde{\bGamma}\|\leq \delta_t\big)\\
\lesssim& \nu_k\delta_t\frac{\sigma^2m_2^{1/2}+\sigma\mu_1}{\bar{g}_k(\bA\bA^{\top})}\Big(\frac{2\delta_t}{\bar{g}_k(\bA\bA^{\top})}\Big)^{r-2}\|\bx\|_{\ell_2}\|\by\|_{\ell_2}.
\end{align*}
Plug the bounds into (\ref{eq:PkSkPk}), 
\begin{align*}
\big|\mathbb{E}&\big<(\bP_k^{uu})^{\perp}\bS_k(\widetilde{\bGamma})\bP_k^{uu}\by,\bx\big>{\bf 1}\big(\|\widetilde{\bGamma}\leq \delta_t\|\big)\big|\\
&\lesssim \sum_{r\geq 2}\frac{\pi\bar{g}_k(\bA\bA^{\top})}{2\pi}\Big(\frac{2}{\bar{g}_k(\bA\bA^{\top})}\Big)^2(r-1)\nu_k\delta_t\frac{\sigma^2m_2^{1/2}+\sigma\mu_1}{\bar{g}_k(\bA\bA^{\top})}\Big(\frac{2\delta_t}{\bar{g}_k(\bA\bA^{\top})}\Big)^{r-2}\|\bx\|_{\ell_2}\|\by\|_{\ell_2}\\
&\leq D_1\nu_k\frac{\sigma^2m_2^{1/2}+\sigma\mu_1}{\bar{g}_k(\bA\bA^{\top})}\|\bx\|_{\ell_2}\|\by\|_{\ell_2}\sum_{r\geq 2}(r-1)\Big(\frac{2\delta_t}{\bar{g}_k(\bA\bA^{\top})}\Big)^{r-1}
\end{align*}
where we used the fact $\oint_{\gamma_k}\big(\widetilde{\bR}_{\bA\bA^{\top}}(\eta)\widetilde{\bGamma}\big)^r\bP_k^{uu}d\eta={\bf 0}$. By the inequality $\sum_{r\geq 1}rq^r=\frac{q}{(1-q)^2}, \forall q<1$ and the fact $D_1\delta_t\leq \bar{g}_k(\bA\bA^{\top})$ for some large constant $D_1>0$ and $t\leq m_1$, we conclude with
\begin{align*}
\big|\mathbb{E}\big<(\bP_k^{uu})^{\perp}&\bS_k(\widetilde{\bGamma})\bP_k^{uu}\by,\bx\big>{\bf 1}\big(\|\widetilde{\bGamma}\leq \delta_t\|\big)\big|\\
&\lesssim \nu_k\frac{\sigma^2m_2^{1/2}+\sigma\mu_1}{\bar{g}_k(\bA\bA^{\top})}\Big(\frac{2\delta_t}{\bar{g}_k(\bA\bA^{\top})}\Big)\|\bx\|_{\ell_2}\|\by\|_{\ell_2},\quad \forall \bx, \by\in\RR^{m_1}
\end{align*}
implying that
$$
\Big\|\EE (\bP_k^{uu})^{\perp}\bS_k(\widetilde{\bGamma})\bP_k^{uu}{\bf 1}\big(\|\widetilde{\bGamma}\|\leq \delta_t\big)\Big\|\lesssim \nu_k\frac{\sigma^2m_2^{1/2}+\sigma\mu_1}{\bar{g}_k(\bA\bA^{\top})}\Big(\frac{2\delta_t}{\bar{g}_k(\bA\bA^{\top})}\Big).
$$
The same bound holds for 
$$
\big\|\EE \bP_k^{uu}\bS_k(\widetilde{\bGamma})(\bP_k^{uu})^{\perp}{\bf 1}\big(\|\widetilde{\bGamma}\|\leq \delta_t\big)\big\|\quad{\rm and}\quad \big\|\EE (\bP_k^{uu})^{\perp}\bS_k(\widetilde{\bGamma})(\bP_k^{uu})^{\perp}{\bf 1}\big(\|\widetilde{\bGamma}\|\leq \delta_t\big)\big\|,
$$
following the same arguments. As a result,
\begin{align}
\Big\|\mathbb{E}\Big((\bP_k^{uu})^{\perp}\bS_k(\widetilde{\bGamma})\bP_k^{uu}&+\bP_k^{uu}\bS_k(\widetilde{\bGamma})(\bP_k^{uu})^{\perp}+(\bP_k^{uu})^{\perp}\bS_k(\widetilde{\bGamma})(\bP_k^{uu})^{\perp}\Big){\bf 1}\big(\|\widetilde{\bGamma}\|\leq \delta_t\big)\Big\|\nonumber\\
&\lesssim \nu_k\frac{\sigma^2m_2^{1/2}+\sigma\mu_1}{\bar{g}_k(\bA\bA^{\top})}\Big(\frac{2\delta_t}{\bar{g}_k(\bA\bA^{\top})}\Big).\label{eq:biasbound} 
\end{align}
By choosing $t=m_1$ such that $\PP(\|\widetilde{\bGamma}\|\geq \delta_{m_1})\leq e^{-m_1/2}$, we get
\begin{align*}
&\Big\|\mathbb{E}\Big((\bP_k^{uu})^{\perp}\bS_k(\widetilde{\bGamma})\bP_k^{uu}+\bP_k^{uu}\bS_k(\widetilde{\bGamma})(\bP_k^{uu})^{\perp}+(\bP_k^{uu})^{\perp}\bS_k(\widetilde{\bGamma})(\bP_k^{uu})^{\perp}\Big){\bf 1}\big(\|\widetilde{\bGamma}\|> \delta_{m_1}\big)\Big\|\\
&\leq \mathbb{E}\Big\|\Big((\bP_k^{uu})^{\perp}\bS_k(\widetilde{\bGamma})\bP_k^{uu}+\bP_k^{uu}\bS_k(\widetilde{\bGamma})(\bP_k^{uu})^{\perp}+(\bP_k^{uu})^{\perp}\bS_k(\widetilde{\bGamma})(\bP_k^{uu})^{\perp}\Big)\Big\|{\bf 1}\big(\|\widetilde{\bGamma}\|> \delta_{m_1}\big)\\
&\leq \EE\|\bS_k(\widetilde{\bGamma})\|{\bf 1}\big(\|\widetilde{\bGamma}\|> \delta_{m_1}\big)
\leq \mathbb{E}^{1/2}\|\bS_k(\widetilde{\bGamma})\|^2\PP^{1/2}\big(\|\widetilde{\bGamma}\|>\delta_{m_1}\big)\\
&\lesssim \Big(\frac{\delta_{m_1}}{\bar{g}_k(\bA\bA^{\top})}\Big)^2\PP^{1/2}\big(\|\widetilde{\bGamma}\|>\delta_{m_1}\big)
\lesssim \Big(\frac{\delta_{m_1}}{\bar{g}_k(\bA\bA^{\top})}\Big)^2 e^{-m_1/2},
\end{align*}
which is clearly dominated by (\ref{eq:biasbound}).
Substitute the above bounds into (\ref{eq:hatP-P}) and we get
\begin{align*}
\Big\|\EE\hat{\bP}_k^{uu}-\bP_k^{uu}-\bP_k^{uu}\bS_k(\widetilde{\bGamma})\bP_k^{uu}\Big\| &\leq \|\mathbb{E}\bS_k(\widetilde{\bGamma})-\bS_k(\hat{\bGamma})\|+D_1\nu_k\frac{\sigma^2m_2^{1/2}+\sigma\mu_1}{\bar{g}_k(\bA\bA^{\top})}\Big(\frac{2\delta(m_1,m_2)}{\bar{g}_k(\bA\bA^{\top})}\Big)\\
&\leq D_2\nu_k\frac{\sigma^2m_2^{1/2}+\sigma^2m_1+\sigma\mu_1}{\bar{g}_k(\bA\bA^{\top})}\Big(\frac{2\delta(m_1,m_2)}{\bar{g}_k(\bA\bA^{\top})}\Big).
\end{align*}
\end{proof}

\subsection{Proof of Theorem~\ref{thm:tildeu}}
The proof of Theorem~\ref{thm:tildeu} is identical to the proof of Corollary~$1.5$ in \cite{koltchinskii2016perturbation} and will be skipped here.

\subsection{Proof of Theorem~\ref{thm:linfty}}
It suffices to prove the upper bound of $\big|\widetilde{A}(i,j,k)-A(i,j,k)\big|$ for $i\in[d_1], j\in[d_2], k\in[d_3]$. To this end, denote by $\be_i$ the $i$-th canonical basis vectors. Observe that
\begin{align*}
\big<\widetilde{\bA}-\bA, \be_i\otimes \be_j\otimes \be_k \big>=&\Big<\bA\times_1\bP_{\widetilde{\bU}}\times_2\bP_{\widetilde{\bV}}\times_3\bP_{\widetilde{\bW}}-\bA, \be_i\otimes \be_j\otimes \be_k\Big>\\
+&\Big<\bZ\times_1\bP_{\widetilde{\bU}}\times_2\bP_{\widetilde{\bV}}\times_3\bP_{\widetilde{\bW}}, \be_i\otimes \be_j\otimes \be_k\Big>.
\end{align*}
Some preliminary facts shall be concluded from Theorem~\ref{thm:linearform}. 
By Theorem~\ref{thm:tildeu}, there exists an event $\calE_2$ with $\PP\big(\calE_2\big)\geq 1-\frac{1}{d^2}$ on which
$$
\big\|\be_i^{\top}\big(\widetilde{\bU}-\bU\big)\big\|_{\ell_2}\leq r^{1/2}\big\|\be_i^{\top}\big(\widetilde{\bU}-\bU\big)\big\|_{\ell_\infty}\lesssim \frac{\sigma\overline{\Lambda}(\bA)r^{1/2}+\sigma^2dr^{1/2}}{\bar{g}_{\min}^2(\bA)}\log^{1/2}d
$$
and 
$$
\big\|\widetilde{\bU}^{\top}\bU-\bI_{r_1}\big\|\leq \|\widetilde{\bU}^{\top}\bU-\bI_{r_1}\|_{\rm F}\lesssim r \|\widetilde{\bU}^{\top}\bU-\bI_{r_1}\|_{\ell_\infty}\lesssim \frac{\sigma\overline{\Lambda}(\bA)r+\sigma^2dr}{\bar{g}_{\min}^2(\bA)}\log^{1/2}d.
$$
The following decomposition is straightforward,
\begin{align*}
\bA\cdot\big(\bP_{\widetilde{\bU}},& \bP_{\widetilde{\bV}}, \bP_{\widetilde{\bW}}\big)-\bA\\
=&\bA\cdot \big(\bP_{\widetilde{\bU}}-\bP_\bU,\bP_{\bV}, \bP_\bW\big)+\bA\cdot\big(\bP_\bU, \bP_{\widetilde{\bV}}-\bP_\bV,\bP_\bW\big)\\
+&\bA\cdot\big(\bP_\bU,\bP_\bV,\bP_{\widetilde{\bW}}-\bP_\bW\big)+\bA\cdot\big(\bP_{\widetilde{\bU}}-\bP_{\bU}, \bP_{\widetilde{\bV}}-\bP_\bV,\bP_\bW\big)\\
+&\bA\cdot\big(\bP_{\widetilde{\bU}}-\bP_\bU, \bP_{\bV}, \bP_{\widetilde{\bW}}-\bP_\bW\big)+
\bA\cdot\big(\bP_\bU, \bP_{\widetilde{\bV}}-\bP_\bV, \bP_{\widetilde{\bW}}-\bP_\bW\big)\\
+&\bA\cdot\big(\bP_{\widetilde{\bU}}-\bP_\bU, \bP_{\widetilde{\bV}}-\bP_\bV, \bP_{\widetilde{\bW}}-\bP_\bW\big)
\end{align*}
Recall that $\bA=\bC\cdot(\bU,\bV,\bW)$ and we get
\begin{align*}
\Big<\bA\cdot\big(&\bP_{\widetilde{\bU}}-\bP_\bU, \bP_\bV, \bP_\bW\big), \be_i\otimes \be_j\otimes \be_k\Big>\\
=&\be_i^{\top}\Big(\widetilde{\bU}\big(\widetilde{\bU}^{\top}\bU\big)-\bU\Big)\calM_1(\bC)\big(\bV\otimes \bW\big)^{\top}(\be_j\otimes \be_k).
\end{align*}
Observe that
\begin{eqnarray*}
\be_i^{\top}\Big(\widetilde{\bU}\big(\widetilde{\bU}^{\top}\bU\big)-\bU\Big)=\be_i^{\top}\big(\widetilde{\bU}-\bU\big)\big(\widetilde{\bU}^{\top}\bU\big)+\be_i^{\top}\bU\big(\widetilde{\bU}^{\top}\bU-\bI_{r_1}\big)
\end{eqnarray*}
implying that on event $\calE_2$,
\begin{align*}
\Big\|\be_i^{\top}\Big(&\widetilde{\bU}\big(\widetilde{\bU}^{\top}\bU\big)-\bU\Big)\Big\|_{\ell_2}\\
\leq& \big\|(\widetilde{\bU}-\bU)^{\top}\be_i\big\|_{\ell_2}\|\widetilde{\bU}^{\top}\bU\|+\big\|\tilde\bU^{\top}\bU-\bI_{r_1}\big\|\|\bU^{\top}\be_i\|_{\ell_2}\\
\lesssim& \frac{\sigma\overline{\Lambda}(\bA)r^{1/2}+\sigma^2dr^{1/2}}{\bar{g}_{\min}^2(\bA)}\log^{1/2}d+\|\bU^{\top}\be_i\|_{\ell_2}\frac{\sigma\overline{\Lambda}(\bA)r+\sigma^2dr}{\bar{g}_{\min}^2(\bA)}\log^{1/2}d\\
\lesssim& \frac{\sigma\overline{\Lambda}(\bA)r+\sigma^2dr}{\bar{g}_{\min}^2(\bA)}\log^{1/2}d,
\end{align*}
where we used the facts $\|\widetilde{\bU}^{\top}\bU\|\leq \|\widetilde{\bU}\|\|\bU\|\leq (1+b_k)^{-1/2}=O(1)$ and
$$
\|\bU^{\top}\be_i\|_{\ell_2}=\big<\bU\bU^{\top}, \be_i\otimes \be_i\big>^{1/2}\leq 1.
$$
Therefore, on event $\calE_2$,
\begin{align*}
\big|\big<\bA\cdot\big(\bP_{\widetilde{\bU}}-\bP_\bU,&\bP_\bV,\bP_\bW\big), \be_i\otimes \be_j\otimes \be_k\big>\big|\\
\lesssim& \overline{\Lambda}(\bA)\bigg(\frac{\sigma\overline{\Lambda}(\bA)r+\sigma^2dr}{\bar{g}_{\min}^2(\bA)}\log^{1/2}d\bigg)\|\bV^{\top}\be_j\|_{\ell_2}\|\bW^{\top}\be_k\|_{\ell_2}.
\end{align*}
Similar bounds hold for
$$
\big|\big<\bA\cdot\big(\bP_\bU,\bP_{\widetilde{\bV}}-\bP_\bV,\bP_\bW\big), \be_i\otimes \be_j\otimes \be_k\big>\big|\quad {\rm and}\quad \big|\big<\bA\cdot\big(\bP_\bU,\bP_\bV,\bP_{\widetilde{\bW}}-\bP_\bW\big), \be_i\otimes \be_j\otimes \be_k\big>\big|.
$$
Following the same method, we can show that on event $\calE_2$,
\begin{align*}
\big|\big<\bA\cdot\big(\bP_{\widetilde{\bU}}-\bP_\bU,&\bP_{\widetilde{\bV}}-\bP_\bV,\bP_\bW\big), \be_i\otimes \be_j\otimes \be_k\big>\big|\\
\lesssim&\overline{\Lambda}(\bA)\bigg(\frac{\sigma\overline{\Lambda}(\bA)r+\sigma^2dr}{\bar{g}_{\min}^2(\bA)}\log^{1/2}d\bigg)^2\|\bW^{\top}\be_k\|_{\ell_2}
\end{align*}
and
\begin{align*}
\big|\big<\bA\cdot\big(\bP_{\widetilde{\bU}}-\bP_\bU,&\bP_{\widetilde{\bV}}-\bP_\bV,\bP_{\widetilde{\bW}}-\bP_\bW\big), \be_i\otimes \be_j\otimes \be_k\big>\big|\\
\lesssim&\overline{\Lambda}(\bA)\bigg(\frac{\sigma\overline{\Lambda}(\bA)r+\sigma^2dr}{\bar{g}_{\min}^2(\bA)}\log^{1/2}d\bigg)^3.
\end{align*}
We conclude that on event $\calE_2$,
\begin{align*}
\big|\big<\bA&\cdot\big(\bP_{\widetilde{\bU}}, \bP_{\widetilde{\bV}}, \bP_{\widetilde{\bW}}\big)-\bA, \be_i\otimes \be_j\otimes \be_k\big>\big|\\
\lesssim& \overline{\Lambda}(\bA)\bigg(\frac{\sigma\overline{\Lambda}(\bA)r+\sigma^2dr}{\bar{g}_{\min}^2(\bA)}\log^{1/2}d\bigg)\Big(\|\bV^{\top}\be_j\|_{\ell_2}\|\bW^{\top}\be_k\|_{\ell_2}\\
&+\|\bU^{\top}\be_i\|_{\ell_2}\|\bW^{\top}\be_k\|_{\ell_2}+\|\bU^{\top}\be_i\|_{\ell_2}\|\bV^{\top}\be_j\|_{\ell_2}\Big)\\
&+ \overline{\Lambda}(\bA)\bigg(\frac{\sigma\overline{\Lambda}(\bA)r+\sigma^2dr}{\bar{g}_{\min}^2(\bA)}\log^{1/2}d\bigg)^2\Big(\|\bV^{\top}\be_j\|_{\ell_2}+\|\bU^{\top}\be_i\|_{\ell_2}+\|\bW^{\top}\be_k\|_{\ell_2}\Big)\\
&+\overline{\Lambda}(\bA)\bigg(\frac{\sigma\overline{\Lambda}(\bA)r+\sigma^2dr}{\bar{g}_{\min}^2(\bA)}\log^{1/2}d\bigg)^3.
\end{align*}
Recall that for all $i\in[d_1], j\in[d_2], k\in[d_3]$
$$
\|\bU^{\top}\be_i\|_{\ell_2}\leq \mu_\bU\sqrt{\frac{r}{d}},\quad \|\bV^{\top}\be_j\|_{\ell_2}\leq \mu_\bV\sqrt{\frac{r}{d}},\quad \|\bW^{\top}\be_k\|_{\ell_2}\leq \mu_\bW\sqrt{\frac{r}{d}}
$$
and conditions (\ref{eq:denoisegk1}) (\ref{eq:denoisegk2}) (\ref{eq:denoisegk3}) imply 
$$
\frac{\sigma\overline{\Lambda}(\bA)r+\sigma^2dr}{\bar{g}_{\min}^2(\bA)}\log^{1/2}d\lesssim r\Big(\frac{\log d}{d}\Big)^{1/2}.
$$
We end up with a simpler bound on event $\calE_2$,
\begin{align}\label{eq:A-Aijk}
\big|\big<\bA\cdot\big(\bP_{\widetilde{\bU}}, &\bP_{\widetilde{\bV}}, \bP_{\widetilde{\bW}}\big)-\bA, \be_i\otimes \be_j\otimes \be_k\big>\big|\\
\lesssim& \sigma r^{3}\bigg(\frac{\sigma\widetilde{\kappa}(\bA)}{\bar{g}_{\min}(\bA)}+\frac{\widetilde{\kappa}^2(\bA)}{d}\bigg)(\mu_\bU\mu_\bV+\mu_\bU\mu_\bW+\mu_\bV\mu_\bW)\log^{3/2}d\nonumber
\end{align}
where $\widetilde{\kappa}(\bA)=\overline{\Lambda}(\bA)/\bar{g}_{\min}(\bA)$.

Next, we prove the upper bound of $\big|\big<\bZ\cdot(\bP_{\widetilde{\bU}}, \bP_{\widetilde{\bV}}, \bP_{\widetilde{\bW}}), \be_i\otimes \be_j\otimes \be_k\big>\big|$ and we proceed with the same decomposition. Observe that 
\begin{align*}
\big<\bZ\cdot(\bP_\bU,\bP_\bV,\bP_\bW), \be_i\otimes \be_j\otimes \be_k\big>=&\big<\bZ, (\bP_\bU\be_i)\otimes(\bP_\bV\be_j)\otimes(\bP_\bW\be_k)\big>\\
\sim&\calN\Big(0,\sigma^2\big\|\bP_\bU\be_i\big\|_{\ell_2}^2\big\|\bP_\bV\be_j\big\|_{\ell_2}^2\big\|\bP_\bW\be_k\big\|_{\ell_2}^2\Big)
\end{align*}
The standard concentration inequality of Gaussian random variables yields that with probability at least $1-\frac{1}{d^2}$,
\begin{align*}
\big|\big<\bZ\cdot(\bP_\bU,\bP_\bV,\bP_\bW), \be_i\otimes \be_j\otimes \be_k\big>\big|\lesssim& \sigma\|\bU^{\top}\be_i\|_{\ell_2}\|\bV^{\top}\be_j\|_{\ell_2}\|\bW^{\top}\be_k\|_{\ell_2}\log^{1/2}d\\
\lesssim& \sigma\Big(\frac{r}{d}\Big)^{3/2}\mu_\bU\mu_\bV\mu_\bW\log^{1/2}d.
\end{align*}
Similarly, with probability at least $1-\frac{1}{d^2}$,
\begin{align*}
\big|\big<\bZ\cdot\big(\bP_{\widetilde{\bU}}&-\bP_\bU, \bP_\bV, \bP_\bW\big), \be_i\otimes \be_j\otimes \be_k\big>\big|\\
=&\big|\be_i^{\top}\big(\bP_{\widetilde{\bU}}-\bP_\bU\big)\calM_1(\bZ)\big(\bV\otimes\bW\big)\big((\bV^{\top}\be_j)\otimes(\bW^{\top}\be_k)\big)\big|\\
\leq& \|(\bP_{\widetilde{\bU}}-\bP_\bU)\be_i\|_{\ell_2}\big\|\calM_1(\bZ)(\bV\otimes\bW)\big\|\|\bV^{\top}\be_j\|_{\ell_2}\|\bW^{\top}\be_k\|_{\ell_2}\\
\lesssim& \sigma d^{1/2}\|(\bP_{\widetilde{\bU}}-\bP_\bU)\be_i\|_{\ell_2}\big|\|\bV^{\top}\be_j\|_{\ell_2}\|\bW^{\top}\be_k\|_{\ell_2}
\end{align*}
where we used Lemma~\ref{lemma:gammabound} for the upper bound of $\big\|\calM_1(\bZ)(\bV\otimes\bW)\big\|$. Moreover, since $\mu_\bU\geq 1$,
\begin{align*}
\big\|\big(\bP_{\widetilde{\bU}}-\bP_\bU\big)\be_i\big\|_{\ell_2}\leq& \|(\widetilde{\bU}-\bU)\be_i\|_{\ell_2}+\|\widetilde{\bU}-\bU\|_{\ell_2}\|\bU^{\top}\be_i\|_{\ell_2}\\
\lesssim& \frac{\sigma\overline{\Lambda}(\bA)r+\sigma^2dr}{\bar{g}_{\min}^2(\bA)}\mu_\bU\log^{1/2}d.
\end{align*}
Denote the above event by $\calE_3$. On $\calE_2\cap\calE_3$,
\begin{eqnarray*}
\big|\big<\bZ\cdot\big(\bP_{\widetilde{\bU}}-\bP_\bU, \bP_\bV, \bP_\bW\big), \be_i\otimes \be_j\otimes \be_k\big>\big|\lesssim \frac{\sigma r}{d^{1/2}}\bigg(\frac{\sigma\overline{\Lambda}(\bA)r+\sigma^2dr}{\bar{g}_{\min}^2(\bA)}\bigg)\mu_\bU\mu_\bV\mu_\bW\log^{1/2}d.
\end{eqnarray*}
Similar bounds can be attained for 
$$
\big|\big<\bZ\cdot\big(\bP_\bU, \bP_{\widetilde{\bV}}-\bP_\bV, \bP_\bW\big), \be_i\otimes \be_j\otimes \be_k\big>\big|\quad{\rm and}\quad
\big|\big<\bZ\cdot\big(\bP_\bU, \bP_\bV, \bP_{\widetilde{\bW}}-\bP_\bW\big), \be_i\otimes \be_j\otimes \be_k\big>\big|.
$$
In an identical fashion, on event $\calE_2\cap \calE_3$,
\begin{align*}
\big|\big<\bZ\cdot\big(\bP_{\widetilde{\bU}}-\bP_\bU,& \bP_{\widetilde{\bV}}-\bP_\bV, \bP_\bW\big), \be_i\otimes \be_j\otimes \be_k\big>\big|\\
\lesssim& \sigma r^{1/2}\bigg(\frac{\sigma\overline{\Lambda}(\bA)r+\sigma^2dr}{\bar{g}_{\min}^2(\bA)}\bigg)^2\mu_\bU\mu_\bV\mu_\bW\log d.
\end{align*}
and
\begin{align*}
\big|\big<\bZ\cdot\big(\bP_{\widetilde{\bU}}-\bP_\bU, &\bP_{\widetilde{\bV}}-\bP_\bV, \bP_{\widetilde{\bW}}-\bP_\bW\big), \be_i\otimes \be_j\otimes \be_k\big>\big|\\
\lesssim& \sigma d^{1/2}\bigg(\frac{\sigma\overline{\Lambda}(\bA)r+\sigma^2dr}{\bar{g}_{\min}^2(\bA)}\bigg)^3\mu_\bU\mu_\bV\mu_\bW\log^{3/2}d.
\end{align*}
Observe by conditions (\ref{eq:denoisegk1}) (\ref{eq:denoisegk2}) (\ref{eq:denoisegk3}) that
$$
\frac{\sigma\overline{\Lambda}(\bA)r+\sigma^2dr}{\bar{g}_{\min}^2(\bA)}\lesssim \frac{r}{d^{1/2}}.
$$ 
We conclude on event $\calE_2\cap\calE_3$ with
\begin{eqnarray}\label{eq:Zijk}
\big|\big<\bZ\cdot\big(\bP_{\widetilde{\bU}}, \bP_{\widetilde{\bV}}, \bP_{\widetilde{\bW}}\big), \be_i\otimes \be_j\otimes \be_k\big>\big|\lesssim \frac{\sigma r^2}{d^{1/2}}\bigg(\frac{\sigma\overline{\Lambda}(\bA)r+\sigma^2dr}{\bar{g}_{\min}^2(\bA)}\bigg)\mu_\bU\mu_\bV\mu_\bW\log^{3/2}d.
\end{eqnarray}
By combining (\ref{eq:A-Aijk}) and (\ref{eq:Zijk}), we get on event $\calE_2\cap\calE_3$,
\begin{align*}
\big|\big<\widetilde{\bA}-&\bA, \be_i\otimes \be_j\otimes \be_k\big>\big|\\
\lesssim& \sigma r^{3}\bigg(\frac{\sigma\widetilde{\kappa}(\bA)}{\bar{g}_{\min}(\bA)}+\frac{\widetilde{\kappa}^2(\bA)}{d}\bigg)(\mu_\bU\mu_\bV+\mu_\bU\mu_\bW+\mu_\bV\mu_\bW)\log^{3/2}d\\
+&\frac{\sigma r^2}{d^{1/2}}\bigg(\frac{\sigma\overline{\Lambda}(\bA)r+\sigma^2dr}{\bar{g}_{\min}^2(\bA)}\bigg)\mu_\bU\mu_\bV\mu_\bW\log^{3/2}d\\
\lesssim& \sigma r^{3}\bigg(\frac{\sigma\widetilde{\kappa}(\bA)}{\bar{g}_{\min}(\bA)}+\frac{\widetilde{\kappa}^2(\bA)}{d}\bigg)(\mu_\bU\mu_\bV+\mu_\bU\mu_\bW+\mu_\bV\mu_\bW)\log^{3/2}d,
\end{align*}
where the last inequality is due to fact $\bar{g}_{\min}(\bA)\gtrsim \sigma d^{3/4}$ and $\max\big\{\mu_\bU, \mu_\bV, \mu_\bW\big\}\lesssim \sqrt{d}$. 

\bibliographystyle{plainnat}
\bibliography{refer}

\appendix
\section{Proof of Lemma~\ref{lemma:gammabound}}
Let $\bz_i\in\RR^{m_1}, i=1,\ldots,m_2$ denote the columns of $\bZ$. Then, we write
$$
\bZ\bZ^{\top}-\sigma^2m_2\bI_{m_1}=\sum_{i=1}^{m_2}\big(\bz_i\otimes \bz_i-\sigma^2\bI_{m_1}\big).
$$
Similarly, let $\tilde{\bz}_j\in\RR^{m_1}, j=1,\ldots,m_1$ denote the rows of $\bZ$ and observe that $\|\bB\bZ^{\top}\|=\|\bB\bZ^{\top}\bZ\bB^{\top}\|^{1/2}$ and
$$
\bB\bZ^{\top}\bZ\bB^{\top}=\sum_{j=1}^{m_1}\Big(\big(\bB\check{\bz}_j\big)\otimes \big(\bB\check{\bz}_j\big)-\sigma^2\bB\bB^{\top}\Big).
$$
The inequalities (\ref{eq:AZ}) and (\ref{eq:PAZ}) are on the concentration of sample covariance operator, where a sharp bound has been derived in \cite{koltchinskii2017concentration} and will be skipped here.

\section{Proof of Theorem~\ref{thm:xPky}}
Since $\EE\hat{\bGamma}={\bf 0}$, we immediately get $\mathbb{E}\bL_k(\hat{\bGamma})={\bf 0}$.
Then, 
\begin{eqnarray*}
\big<\bx, \hat{\bP}_k^{uu}\by\big>-\mathbb{E}\big<\bx,\hat{\bP}_k^{uu}\by \big>= \big<\bx, \bL_k(\hat{\bGamma})\by\big>
+\big<\bx, \bS_k(\hat{\bGamma})\by \big>-\mathbb{E}\big<\bx, \bS_k(\hat{\bGamma})\by \big>.
\end{eqnarray*}

\begin{lemma}\label{lemma:Lkcon}
For any $\bx,\by\in\RR^{m_1}$, there exists an absolute constant $D_1>0$ such that for all $0\leq t\leq m_1$, with probability at least $1-e^{-t}$,
\begin{eqnarray*}
\big|\langle\bx, \bL_k(\hat{\bGamma})\by \rangle\big|\leq D_1t^{1/2}\bigg(\frac{\sigma \mu_1+\sigma^2m_2^{1/2}}{\bar{g}_k(\bA\bA^{\top})}\bigg)\|\bx\|_{\ell_2}\|\by\|_{\ell_2}.
\end{eqnarray*}
\end{lemma}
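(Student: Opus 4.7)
The plan is to split $\bL_k(\hat\bGamma)$ into a part that is linear in the Gaussian noise $\bZ$ and a part that is a centered quadratic form in $\bZ$, and then bound each contribution with a sharp concentration inequality. Using $\hat\bGamma=\bA\bZ^{\top}+\bZ\bA^{\top}+\bigl(\bZ\bZ^{\top}-m_2\sigma^2\bI_{m_1}\bigr)$, write
\[
\big\langle\bx,\bL_k(\hat\bGamma)\by\big\rangle \;=\; T_{\mathrm{lin}}+T_{\mathrm{quad}},
\]
where $T_{\mathrm{lin}}$ collects the four scalars $\bp^{\top}\bA\bZ^{\top}\bq',\ \bp^{\top}\bZ\bA^{\top}\bq',\ \bq^{\top}\bA\bZ^{\top}\bp',\ \bq^{\top}\bZ\bA^{\top}\bp'$ with $\bp=\bP_k^{uu}\bx,\ \bp'=\bP_k^{uu}\by,\ \bq=\bC_k^{uu}\bx,\ \bq'=\bC_k^{uu}\by$, and $T_{\mathrm{quad}}$ collects the two scalars $\bp^{\top}(\bZ\bZ^{\top}-m_2\sigma^2\bI)\bq'$ and $(\bp')^{\top}(\bZ\bZ^{\top}-m_2\sigma^2\bI)\bq$. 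The key structural fact is that $\bP_k^{uu}\bC_k^{uu}=\mathbf 0$, so $\bp\perp\bq'$ and $\bp'\perp\bq$; this orthogonality will both annihilate the $m_2\sigma^2\bI$ subtraction and make the relevant coordinates of $\bZ$ independent.

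For $T_{\mathrm{lin}}$, note it is a linear function of the i.i.d. Gaussian entries of $\bZ$, hence itself a centered Gaussian. Rewriting e.g. $\bp^{\top}\bA\bZ^{\top}\bq'=\langle\bZ,\bq'(\bA^{\top}\bp)^{\top}\rangle$ shows each of the four summands has variance at most
\[
\sigma^2\|\bA^{\top}\bp\|_{\ell_2}^{2}\|\bq'\|_{\ell_2}^{2}\;\le\;\sigma^2\mu_1^{2}\,\|\bx\|_{\ell_2}^{2}\,\frac{\|\by\|_{\ell_2}^{2}}{\bar g_k^{\,2}(\bA\bA^{\top})},
\]
using $\|\bp\|_{\ell_2}\le\|\bx\|_{\ell_2}$, $\|\bq'\|_{\ell_2}\le\|\by\|_{\ell_2}/\bar g_k(\bA\bA^{\top})$, and the analogous estimates for the other three pieces. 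The standard Gaussian tail then yields
\[
\bbP\!\left(|T_{\mathrm{lin}}|\gtrsim t^{1/2}\,\frac{\sigma\mu_1}{\bar g_k(\bA\bA^{\top})}\|\bx\|_{\ell_2}\|\by\|_{\ell_2}\right)\;\le\;e^{-t}.
\]

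For $T_{\mathrm{quad}}$, since $\bp\perp\bq'$ we have $\bp^{\top}(m_2\sigma^2\bI)\bq'=0$, so
\[
\bp^{\top}(\bZ\bZ^{\top}-m_2\sigma^2\bI)\bq'\;=\;\sum_{j=1}^{m_2}\langle\bz_j,\bp\rangle\,\langle\bz_j,\bq'\rangle,
\]
with $\bz_j$ the i.i.d.\ $\calN(0,\sigma^2\bI_{m_1})$ columns of $\bZ$. Because $\bp\perp\bq'$, the pair $(\langle\bz_j,\bp\rangle,\langle\bz_j,\bq'\rangle)$ consists of independent centered Gaussians with standard deviations $\sigma\|\bp\|_{\ell_2}$ and $\sigma\|\bq'\|_{\ell_2}$, so the sum is a centered subexponential quadratic form. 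The Hanson--Wright inequality (equivalently a Bernstein bound for products of independent sub-Gaussians) gives, for every $t\ge 0$,
\[
\bbP\!\left(\Big|\sum_{j}\langle\bz_j,\bp\rangle\langle\bz_j,\bq'\rangle\Big|\gtrsim \sigma^2\|\bp\|_{\ell_2}\|\bq'\|_{\ell_2}\big(\sqrt{m_2 t}+t\big)\right)\le 2e^{-t}.
\]
Since $t\le m_1\le m_2$ we are in the sub-Gaussian regime $\sqrt{m_2 t}\ge t$, and substituting the bounds on $\|\bp\|_{\ell_2},\|\bq'\|_{\ell_2}$ (and analogously for the other summand of $T_{\mathrm{quad}}$) yields
\[
\bbP\!\left(|T_{\mathrm{quad}}|\gtrsim t^{1/2}\,\frac{\sigma^{2}m_2^{1/2}}{\bar g_k(\bA\bA^{\top})}\|\bx\|_{\ell_2}\|\by\|_{\ell_2}\right)\le e^{-t}.
\]
A union bound over the linear and quadratic pieces gives the claimed inequality.

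The main obstacle is the quadratic term $T_{\mathrm{quad}}$: a careless application of a matrix Bernstein or Hanson--Wright bound to $\bP_k^{uu}(\bZ\bZ^{\top}-m_2\sigma^2\bI)\bC_k^{uu}$ would produce a variance proxy of order $\sigma^4 m_2$ instead of the required $\sigma^4 m_2/\bar g_k^{\,2}$ sharpened by the $\bp\perp\bq'$ cancellation. Exploiting that the ranges of $\bP_k^{uu}$ and $\bC_k^{uu}$ are orthogonal, which both kills the identity correction and decouples $\langle\bz_j,\bp\rangle$ from $\langle\bz_j,\bq'\rangle$, is precisely what yields the correct $\sigma^{2}m_2^{1/2}/\bar g_k(\bA\bA^{\top})$ scaling.
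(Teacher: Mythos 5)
Your proposal is correct and follows essentially the same route as the paper: the same decomposition of $\bL_k(\hat{\bGamma})$ into terms sandwiched between $\bP_k^{uu}$ and $\bC_k^{uu}$, Gaussian tails for the parts linear in $\bZ$, and the crucial use of $\bP_k^{uu}\bC_k^{uu}=\bzero$ to cancel the $m_2\sigma^2\bI_{m_1}$ correction and decouple $\langle\bz_j,\bp\rangle$ from $\langle\bz_j,\bq'\rangle$. The only (inessential) difference is that for the quadratic term the paper conditions on $\{\langle\bz_i,\bP_k^{uu}\bx\rangle\}$ and combines a conditional Gaussian tail with a chi-square (Bernstein) bound on $\sum_i\langle\bz_i,\bP_k^{uu}\bx\rangle^2$, whereas you invoke Hanson--Wright/Bernstein for sums of products of independent sub-Gaussians directly; both give the same $t^{1/2}(\sigma\mu_1+\sigma^2 m_2^{1/2})/\bar{g}_k(\bA\bA^{\top})$ bound for $t\leq m_1\leq m_2$.
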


\begin{proof}
    Recall that 
    $$
    \hat{\bGamma} = \bA\bZ^\top+\bZ\bA^\top+\bZ\bZ^\top- m_2\sigma^2 \bI_{m_1}.
    $$
    Then, we write $\big<\bx, \bL_k(\hat{\bGamma})\by \big>$ as
    \begin{align*}
        \langle\bx, \bL_k(\hat{\bGamma})\by \rangle = &\langle \hat{\bGamma} \bP_k^{uu}\bx, \bC_k^{uu} \by\rangle +\langle \hat{\bGamma}\bC_k^{uu}\bx,\bP^{uu}_k\by \rangle \\
         = &\langle ( \bA\bZ^\top+\bZ\bA^\top+\bZ\bZ^\top- m_2\sigma^2 \bI_{m_1})\bP_k^{uu}\bx, \bC_k^{uu} \by\rangle \\
       + & \langle ( \bA\bZ^\top+\bZ\bA^\top+\bZ\bZ^\top- m_2 \sigma^2 \bI_{m_1})\bC_k^{uu}\bx, \bP^{uu}_k \by\rangle.
    \end{align*}
    It suffices to consider the following terms separately for $\bx, \by\in\RR^{m_1}$:
    $$
    \langle \bZ\bA^\top\bx, \by\rangle,\quad \langle \bA\bZ^\top \bx,\by\rangle,\quad \big<\big(\bZ\bZ^\top- m_2\sigma^2\bI_{m_1}\big) \bx,\by\big>.
    $$
    It is straightforward to check that $\langle \bZ\bA^\top\bx, \by\rangle$ is a normal random variable with zero mean and variance
    $$
    \mathbb{E}\langle \bZ\bA^\top\bx, \by\rangle^2 = \mathbb{E}\langle\bZ, \by\otimes (\bA^\top\bx)\rangle^2 = \sigma^2 \|\by\otimes (\bA^\top\bx)\|_{\ell_2}^2 = \sigma^2 \|\by\|_{\ell_2}^2\|\bA^\top\bx\|_{\ell_2}^2,
    $$
    where we used the fact that $\bZ$ is a $m_1\times m_2$ matrix with i.i.d. $\mathcal{N}(0, \sigma^2)$ entries.
    Therefore, 
    $$
    \mathbb{E}\langle \bZ\bA^\top\bP_k^{uu}\bx, \bC_k^{uu}\by\rangle^2 \leq  \frac{\sigma^2\mu_k^2}{\bar{g}^2_k(\bA\bA^{\top})}\|\bx\|_{\ell_2}^2\|\by\|_{\ell_2}^2,  
    $$
    where we used the facts $\|\bC_k\|\leq \frac{1}{\bar{g}_k(\bA\bA^{\top})}$ and $\|\bA^\top\bP_{k}^{uu}\| \leq \mu_k$. By the standard concentration inequality of Gaussian random variables, we get for all $t\geq 0$,
    $$
    \PP\bigg(\big|\big<\bZ\bA^{\top}\bP_k^{uu}\bx, \bC_k^{uu}\by\big>\big|\geq 2t^{1/2}\frac{\sigma \mu_k}{\bar{g}_k(\bA\bA^{\top})}\|\bx\|_{\ell_2}\|\by\|_{\ell_2}\bigg)\leq e^{-t}.
    $$
    Similarly, for all $t\geq 0$,
    $$
    \PP\bigg(\big|\big<\bZ\bA^{\top}\bC_k^{uu}\bx, \bP_k^{uu}\by\big>\big|\geq 2t^{1/2}\frac{\sigma \mu_1}{\bar{g}_k(\bA\bA^{\top})}\|\bx\|_{\ell_2}\|\by\|_{\ell_2}\bigg)\leq e^{-t}.
    $$   
    We next turn to the bound of $\big|\big< \big(\bZ\bZ^\top - m_2\sigma^2\bI_{m_1}\big)\bP_k^{uu}\bx, \bC_k^{uu}\by\big>\big|$. Recall that $\bP_k^{uu}\bC_k^{uu}={\bf 0}$ implying that it suffices to consider $\big<\bZ\bZ^\top\bP_k^{uu}\bx, \bC_k^{uu}\by\big>$.
    Let $\bz_1,\ldots,\bz_{m_2}\in\mathbb{R}^{m_1}$ denote the columns of $\bZ$ such that $\bz_i\in\calN\big({\bf 0}, \sigma^2{\bf I}_{m_1}\big)$ for $1\leq i\leq m_2$. Write
    $$
    \big< \bZ\bZ^{\top}(\bP_k^{uu}\bx), \bC_k^{uu}\by \big>=\sum_{i=1}^{m_2}\big<\bz_i,\bP_k^{uu}\bx\big>\big<\bz_i, \bC_k^{uu}\by \big>.
    $$
    Observe that $\EE\big(\bP_k^{uu}\bz_i\big)\otimes\big(\bC_k^{uu}\bz_i\big)={\bf 0}$ implying that $\big<\bz_i,\bP_k^{uu}\bx\big>$ is independent of $\big<\bz_i, \bC_k^{uu}\by \big>$. By concentration inequalities of Gaussian random variables, for all $t\geq 0$,
    \begin{align*}
    \PP\bigg(\big|\big< \bZ\bZ^{\top}(\bP_k^{uu}\bx), \bC_k^{uu}\by \big>\big|\geq 2t^{1/2}\|\by\|_{\ell_2}&\frac{\sigma\big(\sum_{i=1}^{m_2}\langle\bz_i, \bP_k^{uu}\bx\rangle^2\big)^{1/2}}{\bar{g}_k(\bA\bA^{\top})}\\
    &\Big|\Big\{\langle\bz_i, \bP_k^{uu}\bx\rangle: i=1,\ldots, m_2\Big\}\bigg)\leq e^{-t}.
    \end{align*}
   By \cite[Prop~5.16]{vershynin2010introduction}, the following bound holds with probability at least $1-e^{-t}$,
   $$
   \big|\sum_{i=1}^{m_2}\langle\bz_i, \bP_k^{uu}\bx\rangle^2-\sigma^2m_2\|\bx\|_{\ell_2}^2\big|\lesssim \sigma\Big(m_2^{1/2}t^{1/2}+t\Big)\|\bx\|_{\ell_2}.
   $$
If $t\lesssim m_1\leq m_2$, we conclude that there exists an absolute constant $D_1>0$ such that
$$
\PP\bigg(\big|\big< \bZ\bZ^{\top}(\bP_k^{uu}\bx), \bC_k^{uu}\by \big>\big|\geq D_1\frac{\sigma^2m_2^{1/2}t^{1/2}}{\bar{g}_k(\bA\bA^{\top})}\|\bx\|_{\ell_2}\|\by\|_{\ell_2}\bigg)\leq e^{-t}.
$$
To sum up, for all $0\leq t\lesssim m_1$, the following bound holds with probability at least $1-e^{-t}$,
$$
\big|\big<\bx, \bL_k(\hat{\bGamma})\by\big>\big|\lesssim t^{1/2}\bigg(\frac{\sigma \mu_1+\sigma^2m_2^{1/2}}{\bar{g}_k(\bA\bA^{\top})}\bigg)\|\bx\|_{\ell_2}\|\by\|_{\ell_2}
$$
which concludes the proof.
\end{proof}

It remains to derive the upper bound of $\big|\langle\bx, \bS_k(\hat{\bGamma})\by \rangle-\mathbb{E}\langle\bx, \bS_k(\hat{\bGamma})\by \rangle\big|$.
The following lemma is due to \cite{koltchinskii2016asymptotics}.

\begin{lemma}\label{lemma:Skcon}
Let $\delta(m_1,m_2):=\sigma \mu_1m_1^{1/2}+\sigma^2(m_1m_2)^{1/2}$ and suppose that $\delta(m_1,m_2)\leq \frac{1-\gamma}{2(1+\gamma)}\bar{g}_k(\bA\bA^{\top})$ for some $\gamma\in(0,1)$. There exists a constant $D_{\gamma}>0$ such that, for all symmetric $\widehat{\bGamma}_1, \hat{\bGamma}_2\in\mathbb{R}^{m_1\times m_1}$ satisfying the condition $\max\big\{\|\hat{\bGamma}_1\|, \|\hat{\bGamma}_2\|\big\}\leq (1+\gamma)\delta(m_1,m_2)$,
$$
\|\bS_k(\hat{\bGamma}_1)-\bS_k(\hat{\bGamma}_2)\|\leq D_{\gamma}\frac{\delta(m_1,m_2)}{\bar{g}_k^2(\bA\bA^{\top})}\|\hat{\bGamma}_1-\hat{\bGamma}_2\|.
$$
\end{lemma}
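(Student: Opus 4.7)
The plan is to argue directly from the Riesz contour representation used earlier in the proof of Theorem~\ref{thm:bias}. For any symmetric $\hat{\bGamma}$ with $\|\hat{\bGamma}\|\leq (1+\gamma)\delta(m_1,m_2)$, one has
\[
\bS_k(\hat{\bGamma})=-\frac{1}{2\pi i}\oint_{\gamma_k}\sum_{r\geq 2}(-1)^r\big(\bR_{\bA\bA^{\top}}(\eta)\hat{\bGamma}\big)^r\bR_{\bA\bA^{\top}}(\eta)\,d\eta,
\]
where $\gamma_k$ is the circle of radius $\bar{g}_k(\bA\bA^{\top})/2$ around $\mu_k^2$. On this contour $\|\bR_{\bA\bA^{\top}}(\eta)\|\leq 2/\bar{g}_k(\bA\bA^{\top})$, so the hypothesis $2(1+\gamma)\delta\leq(1-\gamma)\bar{g}_k(\bA\bA^{\top})$ yields $\|\bR_{\bA\bA^{\top}}(\eta)\hat{\bGamma}_j\|\leq 1-\gamma<1$ for $j=1,2$, and both Neumann series converge absolutely. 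I can therefore subtract term by term and estimate $\bS_k(\hat{\bGamma}_1)-\bS_k(\hat{\bGamma}_2)$ summand by summand.

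Next, setting $a_j:=\bR_{\bA\bA^{\top}}(\eta)\hat{\bGamma}_j$, I would invoke the telescoping identity
\[
a_1^r-a_2^r=\sum_{s=1}^{r}a_1^{s-1}(a_1-a_2)a_2^{r-s},\qquad a_1-a_2=\bR_{\bA\bA^{\top}}(\eta)(\hat{\bGamma}_1-\hat{\bGamma}_2).
\]
Putting $q:=2(1+\gamma)\delta/\bar{g}_k(\bA\bA^{\top})\leq 1-\gamma$, each of the $r$ summands has operator norm at most $q^{r-1}\cdot(2/\bar{g}_k(\bA\bA^{\top}))\|\hat{\bGamma}_1-\hat{\bGamma}_2\|$, and the trailing $\bR_{\bA\bA^{\top}}(\eta)$ factor contributes another $2/\bar{g}_k(\bA\bA^{\top})$, so $\|a_1^r\bR_{\bA\bA^{\top}}(\eta)-a_2^r\bR_{\bA\bA^{\top}}(\eta)\|\leq 4rq^{r-1}\bar{g}_k^{-2}(\bA\bA^{\top})\|\hat{\bGamma}_1-\hat{\bGamma}_2\|$.

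Finally I would integrate over $\gamma_k$ (contour length $\pi\bar{g}_k(\bA\bA^{\top})$, prefactor $1/(2\pi)$) and sum over $r\geq 2$. The crucial observation is
\[
\sum_{r\geq 2} r q^{r-1}=\frac{q(2-q)}{(1-q)^2}\leq \frac{2q}{\gamma^2},
\]
which carries an \emph{extra} factor of $q\asymp_\gamma \delta/\bar{g}_k(\bA\bA^{\top})$ beyond the crude bound $\sum_{r\geq 1} r q^{r-1}=1/(1-q)^2$. Putting the integration factor, the per-term estimate, and this sum together yields
\[
\|\bS_k(\hat{\bGamma}_1)-\bS_k(\hat{\bGamma}_2)\|\leq \frac{\bar{g}_k(\bA\bA^{\top})}{2}\cdot\frac{4}{\bar{g}_k^2(\bA\bA^{\top})}\cdot\frac{2q}{\gamma^2}\|\hat{\bGamma}_1-\hat{\bGamma}_2\|\leq D_\gamma\,\frac{\delta(m_1,m_2)}{\bar{g}_k^2(\bA\bA^{\top})}\|\hat{\bGamma}_1-\hat{\bGamma}_2\|.
\]

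The main (and essentially only) subtlety is remembering to exploit that the remainder series starts at $r=2$; using the lazy bound $\sum_{r\geq 2}rq^{r-1}\leq 1/(1-q)^2$ would yield only the weaker Lipschitz constant $C_\gamma/\bar{g}_k(\bA\bA^{\top})$ and sacrifice the quantitative $\delta/\bar{g}_k$ improvement that is needed when Lemma~\ref{lemma:Skcon} is fed into Lemma~\ref{lemma:hatS-tildeS} and Theorem~\ref{thm:bias}. Everything else is routine operator-norm bookkeeping along the Riesz contour.
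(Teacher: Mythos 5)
Your proof is correct: on $\gamma_k$ one has $\|\bR_{\bA\bA^{\top}}(\eta)\|\leq 2/\bar{g}_k(\bA\bA^{\top})$, the hypothesis gives $q=2(1+\gamma)\delta(m_1,m_2)/\bar{g}_k(\bA\bA^{\top})\leq 1-\gamma$ so both Neumann series converge, and the telescoping identity together with the fact that the remainder starts at $r=2$ (so $\sum_{r\geq 2}rq^{r-1}\leq 2q/\gamma^2$, supplying the extra factor $\delta/\bar{g}_k$) yields exactly the claimed Lipschitz constant $D_\gamma\,\delta(m_1,m_2)/\bar{g}_k^2(\bA\bA^{\top})$. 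The paper itself gives no proof of this lemma—it is quoted from \cite{koltchinskii2016asymptotics}—and your contour-integral/perturbation-series argument is essentially the standard proof from that reference, so there is no difference in approach to report.
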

Define function $\varphi(\cdot): \RR_+\mapsto [0, 1]$ such that $\varphi(t)=1$ for $0\leq t\leq 1$ and $\varphi(t)=0$ for $t\geq (1+\gamma)$ and $\varphi$ is linear in between. Then, function $\varphi$ is Lipschitz on $\RR_+$ with constant $\frac{1}{\gamma}$. To illustrate the dependence of $\hat\bGamma$ on $\bZ$, we write $\hat{\bGamma}(\bZ)$ instead of $\hat{\bGamma}$. To this end, fix $\bx, \by\in\RR^{m_1}$ and constants $\delta_1, \delta_2>0$ and define the function
$$
F_{\delta_1,\delta_2,\bx,\by}(\bZ):=\Big<\bx,\bS_k\big(\hat{\bGamma}(\bZ)\big)\by \Big>\varphi\Big(\frac{\|\hat{\bGamma}(\bZ)\|}{\delta_1}\Big)\varphi\Big(\frac{\|\bZ\|}{\delta_2}\Big).
$$
where we view $\bZ$ as a point in $\mathbb{R}^{m_1\times m_2}$ rather than a random matrix.
\begin{lemma}\label{lemma:FLip}
For any $\delta_1\leq \frac{1-\gamma}{2(1+\gamma)}\bar{g}_k(\bA\bA^{\top})$ for some $\gamma\in(0,1)$ and $\delta_2>0$, there exists an absolute constant $C_{\gamma}>0$ such that
\begin{equation*}
\big|F_{\delta_1,\delta_2,\bx,\by}(\bZ_1)-F_{\delta_1,\delta_2,\bx,\by}(\bZ_2)\big|\leq C_{\gamma}\frac{\delta_1}{\bar{g}_k^2(\bA\bA^{\top})}\Big(\mu_1+\delta_2+\frac{\delta_1}{\delta_2}\Big)\|\bZ_1-\bZ_2\|\|\bx\|_{\ell_2}\|\by\|_{\ell_2} \label{eq:fLip}
\end{equation*}
\end{lemma}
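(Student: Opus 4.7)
The plan is to treat $F_{\delta_1,\delta_2,\bx,\by}$ as a product of two truncated factors and chain Lipschitz estimates through. First I introduce the auxiliary map $\psi(\hat{\bGamma}):=\bS_k(\hat{\bGamma})\,\varphi(\|\hat{\bGamma}\|/\delta_1)$ and show it is globally Lipschitz in the operator norm of $\hat{\bGamma}$ with constant $C_\gamma\,\delta_1/\bar{g}_k^2(\bA\bA^\top)$. When both arguments satisfy $\|\hat{\bGamma}_i\|\le(1+\gamma)\delta_1$, Lemma~\ref{lemma:Skcon} applies (our hypothesis $\delta_1\le\tfrac{1-\gamma}{2(1+\gamma)}\bar{g}_k(\bA\bA^\top)$ is exactly what that lemma asks for), and combining it with the $(\gamma\delta_1)^{-1}$-Lipschitzness of $\varphi(\cdot/\delta_1)$ and the bound $\|\bS_k(\hat{\bGamma})\|\le 14(\|\hat{\bGamma}\|/\bar{g}_k)^2$ from Lemma~\ref{lemma:hatPk} yields the claim by a product rule. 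The one nontrivial case is when $\psi(\hat{\bGamma}_2)=0$ but $\psi(\hat{\bGamma}_1)\ne 0$: the boundary Lipschitzness of $\varphi$ still forces $\varphi_1\le(\gamma\delta_1)^{-1}\|\hat{\bGamma}_1-\hat{\bGamma}_2\|$, which combined with $\|\bS_k(\hat{\bGamma}_1)\|\lesssim_\gamma(\delta_1/\bar{g}_k)^2$ gives the same overall constant.

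Next, I write $F(\bZ)=G(\bZ)\,h(\bZ)$ with $G(\bZ):=\langle\bx,\psi(\hat{\bGamma}(\bZ))\by\rangle$ and $h(\bZ):=\varphi(\|\bZ\|/\delta_2)$, and apply the product rule
$$F(\bZ_1)-F(\bZ_2) = [G(\bZ_1)-G(\bZ_2)]\,h(\bZ_1) + G(\bZ_2)\,[h(\bZ_1)-h(\bZ_2)].$$
The second summand is the easy one: for every $\bZ_2$, $|G(\bZ_2)|\le C_\gamma(\delta_1/\bar{g}_k)^2\|\bx\|_{\ell_2}\|\by\|_{\ell_2}$ (because $\psi$ vanishes outside $\|\hat{\bGamma}\|\le(1+\gamma)\delta_1$), and $|h(\bZ_1)-h(\bZ_2)|\le(\gamma\delta_2)^{-1}\|\bZ_1-\bZ_2\|$; together these produce the $\delta_1/\delta_2$ contribution to the Lipschitz constant in the target bound.

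The first summand is the delicate one because $\hat{\bGamma}(\bZ)=\bA\bZ^\top+\bZ\bA^\top+\bZ\bZ^\top-m_2\sigma^2\bI_{m_1}$ is only \emph{locally} Lipschitz in $\bZ$; the quadratic piece $\bZ\bZ^\top$ is the main obstacle. I split into two cases. \textbf{Case A:} $\|\bZ_1-\bZ_2\|\le(1+\gamma)\delta_2$. If $h(\bZ_1)>0$ then $\|\bZ_1\|\le(1+\gamma)\delta_2$ and hence $\|\bZ_2\|\le 2(1+\gamma)\delta_2$, so the identity $\bZ_1\bZ_1^\top-\bZ_2\bZ_2^\top=(\bZ_1-\bZ_2)\bZ_1^\top+\bZ_2(\bZ_1-\bZ_2)^\top$ gives $\|\hat{\bGamma}(\bZ_1)-\hat{\bGamma}(\bZ_2)\|\le(2\mu_1+3(1+\gamma)\delta_2)\|\bZ_1-\bZ_2\|$; chaining with the Lipschitzness of $\psi$ from the first paragraph produces the $\mu_1+\delta_2$ contribution. \textbf{Case B:} $\|\bZ_1-\bZ_2\|>(1+\gamma)\delta_2$. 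Here I abandon the decomposition and simply bound $|F(\bZ_1)-F(\bZ_2)|\le|F(\bZ_1)|+|F(\bZ_2)|\le 2C_\gamma(\delta_1/\bar{g}_k)^2\|\bx\|_{\ell_2}\|\by\|_{\ell_2}$, and since $\|\bZ_1-\bZ_2\|/((1+\gamma)\delta_2)>1$, this is at most $C_\gamma(\delta_1^2/(\bar{g}_k^2\delta_2))\|\bZ_1-\bZ_2\|\|\bx\|_{\ell_2}\|\by\|_{\ell_2}$, absorbed into the $\delta_1/\delta_2$ term. The role of the truncation $\varphi(\|\bZ\|/\delta_2)$ is precisely to confine $\bZ$ to a ball on which $\bZ\mapsto\bZ\bZ^\top$ has Lipschitz constant $O(\delta_2)$, and the two-case split is the device that converts this local control into the uniform Lipschitz bound of the lemma.
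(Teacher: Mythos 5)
Your argument is correct, and it reuses the same three ingredients as the paper's proof (Lemma~\ref{lemma:Skcon} applied with $\delta_1$ in place of $\delta(m_1,m_2)$, which is legitimate under the hypothesis $\delta_1\leq \frac{1-\gamma}{2(1+\gamma)}\bar{g}_k(\bA\bA^{\top})$; the $\frac{1}{\gamma}$-Lipschitzness of $\varphi$; and the bound $\|\bS_k(\hat{\bGamma})\|\leq 14\big(\|\hat{\bGamma}\|/\bar{g}_k(\bA\bA^{\top})\big)^2$ from Lemma~\ref{lemma:hatPk}), but it organizes them differently. The paper enumerates the cases according to which of the two truncations is active for each of $\bZ_1,\bZ_2$ (its Cases 1--5, with the remaining combinations dismissed as similar), and in each case telescopes through one factor at a time. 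You instead prove once that the $\hat{\bGamma}$-level map $\psi(\hat{\bGamma})=\bS_k(\hat{\bGamma})\varphi(\|\hat{\bGamma}\|/\delta_1)$ is globally Lipschitz with constant $C_\gamma\delta_1/\bar{g}_k^2(\bA\bA^{\top})$, then use the product rule $F=G\cdot h$ and split only on whether $\|\bZ_1-\bZ_2\|$ is below or above $(1+\gamma)\delta_2$: in the small-increment case the truncation $h(\bZ_1)>0$ confines both $\|\bZ_1\|$ and $\|\bZ_2\|$ to $O(\delta_2)$, so $\bZ\mapsto\hat{\bGamma}(\bZ)$ is Lipschitz with constant $O(\mu_1+\delta_2)$; in the large-increment case the uniform bound $|F|\lesssim_\gamma \delta_1^2/\bar{g}_k^2(\bA\bA^{\top})$ divided by the large distance is absorbed into the $\delta_1/\delta_2$ term. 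This dichotomy on $\|\bZ_1-\bZ_2\|$ is a genuinely different device from the paper's membership cases; it isolates the only real difficulty (the quadratic term $\bZ\bZ^{\top}$ being only locally Lipschitz) and removes the combinatorial case check and the ``handled similarly'' step, at the cost of slightly larger constants (e.g.\ $3(1+\gamma)\delta_2$ in place of $2(1+\gamma)\delta_2$), which is immaterial since the constant is $C_\gamma$. Both routes yield exactly the stated bound.
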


\begin{proof}[Proof of Lemma~\ref{lemma:FLip}]
	Since $\varphi(\frac{\|\hat{\bGamma}(\bZ)\|}{\delta_1})\varphi(\frac{\|\bZ\|}{\delta_2})\neq 0$ only if $\|\hat{\bGamma}(\bZ)\| \leq (1+\gamma) \delta_1$ and $\|\bZ\|\leq (1+\gamma)\delta_2$, Lemma \ref{lemma:hatPk} implies that 
	\begin{equation*}
	\big|F_{\delta_1,\delta_2,\bx,\by}(\bZ)\big| = \Big|\Big\langle\bx,\bS_k\big(\hat{\bGamma}(\bZ)\big)\by \Big\rangle\varphi\Big(\frac{\|\hat{\bGamma}(\bZ)\|}{\delta_1}\Big)\varphi\Big(\frac{\|\bZ\|}{\delta_2}\Big)\Big| \leq 14(1+\gamma)^2\frac{\delta_1^2}{\bar{g}_k^2(\bA\bA^{\top})}.
	\end{equation*}
	\paragraph*{Case 1.} If
	$\max\big\{\big\|\hat{\bGamma}(\bZ_1)\big\|, \big\|\hat{\bGamma}(\bZ_2)\big\|\big\} \leq (1+\gamma)\delta_1\quad {\rm and}\quad \max\big\{\|\bZ_1\|, \|\bZ_2\|\big\} \leq (1+\gamma)\delta_2.$
	
	By the Lipschitzity of function $\varphi$, Lemma~\ref{lemma:Skcon} and definition of $\hat{\bGamma}(\bZ)$, it is easy to check 
	\begin{align*}
	|F_{\delta_1,\delta_2,\bx,\by}&(\bZ_1)-F_{\delta_1,\delta_2,\bx,\by}(\bZ_2)|\\
	 \leq& \|\bS_k\big(\hat{\bGamma}(\bZ_1)\big) -\bS_k\big(\hat{\bGamma}(\bZ_2)\big)\|\|\bx\|_{\ell_2}\|\by\|_{\ell_2}\\
	+&\frac{14(1+\gamma)^2\delta_1}{\gamma \bar{g}_k^2(\bA\bA^{\top})} \big\|\hat{\bGamma}(\bZ_1)-\hat{\bGamma}(\bZ_2)\big\|\|\bx\|_{\ell_2}\|\by\|_{\ell_2}
	+ \frac{14(1+\gamma)^2\delta_1^2}{\delta_2\gamma\bar{g}_k^2(\bA\bA^{\top})}\|\bZ_1-\bZ_2\|\|\bx\|_{\ell_2}\|\by\|_{\ell_2} \\
	 \leq& D_{\gamma}\frac{\delta_1}{\bar{g}_k^2(\bA\bA^{\top})}\|\hat{\bGamma}(\bZ_1)-\hat{\bGamma}(\bZ_2)\|\|\bx\|_{\ell_2}\|\by\|_{\ell_2} +  \frac{14(1+\gamma)^2\delta_1^2}{\delta_2\gamma\bar{g}_k^2(\bA\bA^{\top})}\|\bZ_1-\bZ_2\|\|\bx\|_{\ell_2}\|\by\|_{\ell_2}\\
	\leq& D_{\gamma}\frac{\delta_1}{\bar{g}_k^2(\bA\bA^{\top})}\Big(\mu_1+\delta_2+\frac{\delta_1}{\delta_2}\Big)\|\bZ_1-\bZ_2\|\|\bx\|_{\ell_2}\|\by\|_{\ell_2}.
	\end{align*}
	\paragraph*{Case 2.} If 
	$\|\hat{\bGamma}(\bZ_1)\| \leq (1+\gamma)\delta_1,\quad \|\hat{\bGamma}(\bZ_2)\| \geq (1+\gamma)\delta_1\quad{\rm and}\quad \max\big\{\|\bZ_1\|, \|\bZ_2\|\big\} \leq (1+\gamma)\delta_2$.
	Since $\|\hat{\bGamma}(\bZ_2)\| \geq (1+\gamma)\delta_1$, we have $\varphi\big(\frac{\|\hat{\bGamma}(\bZ_2)\|}{\delta_1}\big)=0$ and $F_{\delta_1,\delta_2,\bx,\by}(\bZ_2)=0$. Then,
		\begin{align*}
	\label{lemma6_4}
	\big|F_{\delta_1,\delta_2,\bx,\by}&(\bZ_1)-F_{\delta_1,\delta_2,\bx,\by}(\bZ_2)\big|\\
	 =& \Big|\Big\langle\bx,\bS_k\big(\hat{\bGamma}(\bZ_1)\big)\by \Big\rangle\varphi\Big(\frac{\|\hat{\bGamma}(\bZ_1)\|}{\delta_1}\Big)\varphi\Big(\frac{\|\bZ_1\|}{\delta_2}\Big)\Big| \\
	=&\Big|\Big\langle\bx,\bS_k\big(\hat{\bGamma}(\bZ_1)\big)\by \Big\rangle\varphi\Big(\frac{\|\hat{\bGamma}(\bZ_1)\|}{\delta_1}\Big)\varphi\Big(\frac{\|\bZ_1\|}{\delta_2}\Big)
	-\Big\langle\bx,\bS_k\big(\hat{\bGamma}(\bZ_1)\big)\by \Big\rangle\varphi\Big(\frac{\|\hat{\bGamma}(\bZ_2)\|}{\delta_1}\Big)\varphi\Big(\frac{\|\bZ_1\|}{\delta_2}\Big)\Big| \\
	 \leq& \big\|\bS_k\big(\hat{\bGamma}(\bZ_1)\big)\big\| \frac{1}{\delta_1\gamma} \|\hat{\bGamma}(\bZ_1)-\hat{\bGamma}(\bZ_2)\| \|\bx\|_{\ell_2}\|\by\|_{\ell_2}\\
	 \leq& \frac{(1+\gamma)^2\delta_1^2}{\bar{g}_k^2(\bA\bA^{\top})\delta_1\gamma}  \big(2\mu_1+2(1+\gamma)\delta_2\big)\|\bZ_1-\bZ_2\|  \|\bx\|_{\ell_2}\|\by\|_{\ell_2}\\
	 \leq &D_{\gamma}\frac{\delta_1}{\bar{g}_k^2(\bA\bA^{\top})}(\mu_1+\delta_2)\|\bZ_1 - \bZ_2\| \|\bx\|_{\ell_2}\|\by\|_{\ell_2}.
	\end{align*}
	\paragraph*{Case 3.} If
	$
	\|\hat{\bGamma}(\bZ_1)\| \leq (1+\gamma)\delta_1,\quad \|\hat{\bGamma}(\bZ_2)\| \geq (1+\gamma)\delta_1, \quad\|\bZ_1\| \leq (1+\gamma)\delta_2,\quad \|\bZ_2\| \geq (1+\gamma) \delta_2.
	$	
	It can be proved similarly as {\it Case 2}.
	\paragraph*{Case 4.} If
	$
	\|\hat{\bGamma}(\bZ_1)\| \leq (1+\gamma)\delta_1,\quad \|\hat{\bGamma}(\bZ_2)\| \geq (1+\gamma)\delta_1, \quad \|\bZ_1\| \geq (1+\gamma)\delta_2,\quad \|\bZ_2\| \geq (1+\gamma) \delta_2.
	$
	It is a trivial case since $F_{\delta_1,\delta_2,\bx,\by}(\bZ_1)=F_{\delta_1,\delta_2,\bx,\by}(\bZ_2)=0$.
	
	\paragraph*{Case 5.} If
	$\max\big\{\|\hat{\bGamma}(\bZ_1)\|, \|\hat{\bGamma}(\bZ_2)\|\big\} \leq (1+\gamma)\delta_1,\quad \|\bZ_1\| \leq (1+\gamma)\delta_2,\quad \|\bZ_2\| \geq (1+\gamma)\delta_2.
	$
	Again, we have $F_{\delta_1,\delta_2,\bx,\by}(\bZ_2)=0$. Then,
	\begin{align*}
	\big|F_{\delta_1,\delta_2,\bx,\by}&(\bZ_1)-F_{\delta_1,\delta_2,\bx,\by}(\bZ_2)\big|\\
	 =& \Big|\Big\langle\bx,\bS_k\big(\hat{\bGamma}(\bZ_1)\big)\by \Big\rangle\varphi\Big(\frac{\|\hat{\bGamma}(\bZ_1)\|}{\delta_1}\Big)\varphi\Big(\frac{\|\bZ_1\|}{\delta_2}\Big)\Big| \\
	=&\Big|\Big\langle\bx,\bS_k\big(\hat{\bGamma}(\bZ_1)\big)\by \Big\rangle\varphi\Big(\frac{\|\hat{\bGamma}(\bZ_1)\|}{\delta_1}\Big)\varphi\Big(\frac{\|\bZ_1\|}{\delta_2}\Big)
	-\Big\langle\bx,\bS_k\big(\hat{\bGamma}(\bZ_1)\big)\by \Big\rangle\varphi\Big(\frac{\|\hat{\bGamma}(\bZ_1)\|}{\delta_1}\Big)\varphi\Big(\frac{\|\bZ_2\|}{\delta_2}\Big)\Big| \\
	 \leq& \big\|\bS_k\big(\hat{\bGamma}(\bZ_1)\big)\big\|  \frac{1}{\delta_2\gamma} \|\bZ_1 - \bZ_2\| \|\bx\|_{\ell_2}\|\by\|_{\ell_2}
	 \leq \frac{(1+\gamma)^2\delta_1^2}{\bar{g}_k^2(\bA\bA^{\top})\delta_2\gamma}  \|\bZ_1-\bZ_2\|  \|\bx\|_{\ell_2}\|\by\|_{\ell_2}\\
	 \leq& D_{\gamma}\frac{\delta_1}{\bar{g}_k^2(\bA\bA^{\top})}\frac{\delta_1}{\delta_2}\|\bZ_1 - \bZ_2\| \|\bx\|_{\ell_2}\|\by\|_{\ell_2}.
	\end{align*}
	All the other cases shall be handled similarly and we conclude the proof.
	\end{proof}
	
Note that $\|\bZ_1-\bZ_2\|\leq \|\bZ_1-\bZ_2\|_{\ell_2}$, Lemma~\ref{lemma:FLip} indicates that $F_{\delta_1,\delta_2,\bx,\by}(\bZ)$ is Lipschitz with constant 
$$
D_{\gamma}\frac{\delta_1}{\bar{g}_k^2(\bA\bA^{\top})}\Big(\mu_1+\delta_2+\frac{\delta_1}{\delta_2}\Big)\|\bx\|_{\ell_2}\|\by\|_{\ell_2}.
$$
\begin{lemma}\label{lemma:Skxycon}
Let $\delta(m_1,m_2):=\sigma \mu_1m_1^{1/2}+\sigma^2(m_1m_2)^{1/2}$ and
suppose that $\EE\|\hat{\bGamma}\|  \leq \frac{1-\gamma}{2}\bar{g}_k(\bA\bA^{\top})$ for some $\gamma\in(0,1)$. There exists some constant $D_{\gamma}$ such that for any $\bx,\by\in\mathbb{R}^{m_1}$ and all $\log 8\leq t\leq m_1$, the following inequality holds with probability at least $1-e^{-t}$,
$$
\big|\langle\bx,\bS_k(\hat{\bGamma})\by\rangle-\mathbb{E}\langle\bx,\bS_k(\hat{\bGamma})\by\rangle\big|\leq D_{\gamma}t^{1/2}\frac{\sigma\mu_1+\sigma^2m_2^{1/2}}{\bar{g}_k(\bA\bA^{\top})} \bigg(\frac{\delta(m_1,m_2)}{\bar{g}_k(\bA\bA^{\top})}\bigg)\|\bx\|_{\ell_2}\|\by\|_{\ell_2}.
$$
\end{lemma}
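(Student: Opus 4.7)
The plan is to apply Gaussian concentration (Tsirelson--Ibragimov--Sudakov) to the smooth truncated surrogate $F_{\delta_1,\delta_2,\bx,\by}(\bZ)$ already built before Lemma~\ref{lemma:FLip}, and then show that the cutoffs introduce only negligible error. First I would choose $\delta_1$ of order $\EE\|\hat{\bGamma}\| + C\sigma\mu_1 t^{1/2} + C\sigma^2 m_2^{1/2}t^{1/2}$ and $\delta_2$ of order $\sigma(m_1^{1/2}+m_2^{1/2}+t^{1/2})$ so that, via Lemma~\ref{lemma:gammabound}, the joint event $\calE:=\{\|\hat{\bGamma}\|\leq\delta_1\}\cap\{\|\bZ\|\leq\delta_2\}$ has probability $\geq 1-e^{-t}$. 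The assumption $\EE\|\hat{\bGamma}\|\leq \tfrac{1-\gamma}{2}\bar g_k(\bA\bA^\top)$ together with $t\leq m_1$ and the $\delta(m_1,m_2)$ bounds from Lemma~\ref{lemma:gammabound} keeps $\delta_1 \leq \tfrac{1-\gamma}{2(1+\gamma)}\bar g_k(\bA\bA^\top)$, so Lemma~\ref{lemma:FLip} applies.

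With these choices the Lipschitz constant in Lemma~\ref{lemma:FLip} simplifies (using $\mu_1\vee\sigma m_2^{1/2}$ dominates $\delta_1/\delta_2$) to
\[
L \;\lesssim_{\gamma}\; \frac{\delta(m_1,m_2)(\mu_1+\sigma m_2^{1/2})}{\bar g_k^2(\bA\bA^\top)}\,\|\bx\|_{\ell_2}\|\by\|_{\ell_2}.
\]
Viewing $\bZ$ as a Gaussian vector in $\RR^{m_1 m_2}$ with entrywise variance $\sigma^2$, Gaussian concentration for Lipschitz functions yields, for all $t>0$ and some absolute $D$,
\[
\PP\Big(\big|F_{\delta_1,\delta_2,\bx,\by}(\bZ)-\EE F_{\delta_1,\delta_2,\bx,\by}(\bZ)\big| \geq D\,t^{1/2}\sigma L\Big)\leq 2e^{-t/2},
\]
which after absorbing the $\sigma$ into $L$ gives exactly the target bound $t^{1/2}\tfrac{\sigma\mu_1+\sigma^2 m_2^{1/2}}{\bar g_k(\bA\bA^\top)}\cdot\tfrac{\delta(m_1,m_2)}{\bar g_k(\bA\bA^\top)}\|\bx\|_{\ell_2}\|\by\|_{\ell_2}$.

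To convert this into a bound on $\langle\bx,\bS_k(\hat{\bGamma})\by\rangle-\EE\langle\bx,\bS_k(\hat{\bGamma})\by\rangle$, I would argue in two pieces. On the event $\calE$ the two $\varphi$-factors both equal $1$, so $F_{\delta_1,\delta_2,\bx,\by}(\bZ)=\langle\bx,\bS_k(\hat{\bGamma})\by\rangle$, and the above concentration translates immediately. For the difference of expectations $\EE F_{\delta_1,\delta_2,\bx,\by}(\bZ)-\EE\langle\bx,\bS_k(\hat{\bGamma})\by\rangle$, I would use Cauchy--Schwarz together with Lemma~\ref{lemma:hatPk}'s bound $\|\bS_k(\hat{\bGamma})\|\leq 14(\|\hat{\bGamma}\|/\bar g_k(\bA\bA^\top))^2$: writing the bias as an integral over $\calE^c$, it is dominated by $(\EE\|\hat{\bGamma}\|^4/\bar g_k^4(\bA\bA^\top))^{1/2}\cdot\PP(\calE^c)^{1/2}\|\bx\|_{\ell_2}\|\by\|_{\ell_2}$; the fourth moment is $O((\delta(m_1,m_2)/\bar g_k(\bA\bA^\top))^4)$ by Lemma~\ref{lemma:gammabound}, while choosing the tail parameter in $\delta_1,\delta_2$ large enough makes $\PP(\calE^c)^{1/2}$ subexponentially small in $t$, so this piece is dominated by the main Gaussian-concentration contribution for the whole range $\log 8\leq t\leq m_1$.

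The main technical obstacle is the interplay between the two truncation thresholds and the Lipschitz constant in Lemma~\ref{lemma:FLip}: $\delta_1$ must be small enough to keep us inside the analytic regime where $\bS_k$ admits the resolvent expansion (that is, $\delta_1\leq\tfrac{1-\gamma}{2(1+\gamma)}\bar g_k(\bA\bA^\top)$), yet large enough so that $\PP(\|\hat{\bGamma}\|>\delta_1)$ pays for the truncation tail; and $\delta_2$ must be balanced so that the $\mu_1+\delta_2+\delta_1/\delta_2$ factor is no worse than $\mu_1+\sigma m_2^{1/2}$. Once those choices are made and fed through Lemma~\ref{lemma:FLip}, the Gaussian concentration inequality and the fourth-moment truncation argument do the rest routinely.
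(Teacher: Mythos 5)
Your overall architecture is the one the paper itself uses: truncate via the surrogate $F_{\delta_1,\delta_2,\bx,\by}$, apply Gaussian concentration through the Lipschitz bound of Lemma~\ref{lemma:FLip}, and then argue the truncation is harmless. However, there are two concrete gaps. First, your claim that the hypothesis $\EE\|\hat{\bGamma}\|\leq\frac{1-\gamma}{2}\bar{g}_k(\bA\bA^{\top})$ together with $t\leq m_1$ ``keeps $\delta_1\leq\frac{1-\gamma}{2(1+\gamma)}\bar{g}_k(\bA\bA^{\top})$'' is false: $\delta_1$ adds the positive term $Ct^{1/2}(\sigma\mu_1+\sigma^2m_2^{1/2})$ to $\EE\|\hat{\bGamma}\|$, and nothing in the stated hypothesis bounds this term (or $\delta(m_1,m_2)$) by a small multiple of $\bar{g}_k(\bA\bA^{\top})$; even if that term were zero, $\frac{1-\gamma}{2(1+\gamma)}<\frac{1-\gamma}{2}$, so the same $\gamma$ cannot be reused. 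The paper handles exactly this with a case split: when $D_1t^{1/2}(\sigma\mu_1+\sigma^2m_2^{1/2})\leq\frac{\gamma}{4}\bar{g}_k(\bA\bA^{\top})$ the threshold holds with a modified $\gamma'$ and the isoperimetric argument applies, while in the complementary regime one uses the crude bound $\|\bS_k(\hat{\bGamma})\|\leq 14\|\hat{\bGamma}\|^2/\bar{g}_k^2(\bA\bA^{\top})$ from Lemma~\ref{lemma:hatPk}, which falls below the target precisely because there $\bar{g}_k(\bA\bA^{\top})\lesssim t^{1/2}(\sigma\mu_1+\sigma^2m_2^{1/2})$ and $t\leq m_1$. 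Your proposal is missing this second branch, and without it the argument does not cover the full range of $t$ and of parameters permitted by the hypothesis.

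Second, your centering step does not close as written. Concentrating $F$ around $\EE F$ forces you to control the bias $|\EE F-\EE\langle\bx,\bS_k(\hat{\bGamma})\by\rangle|$; with truncation at level $t$ your Cauchy--Schwarz bound gives order $\big(\delta^2(m_1,m_2)/\bar{g}_k^2(\bA\bA^{\top})\big)e^{-t/2}$, which exceeds the claimed bound by a factor of order $m_1^{1/2}e^{-t/2}t^{-1/2}$ (recall $\delta(m_1,m_2)=m_1^{1/2}(\sigma\mu_1+\sigma^2 m_2^{1/2})$), so the bias dominates for $t$ near $\log 8$ when $m_1$ is large. Your suggested remedy of enlarging the tail parameter can repair this (e.g.\ truncating at level $t+2\log m_1$ or at $m_1$), but it feeds back into the first problem: a larger $\delta_1$ is even harder to keep below the resolvent-expansion threshold, and the hypothesis gives no bound on $\delta(m_1,m_2)$ in terms of $\bar{g}_k(\bA\bA^{\top})$, so the case split remains unavoidable. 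The paper sidesteps the bias computation entirely by concentrating around the median $M$ of the untruncated variable (showing $\PP(F\geq M)\geq 1/4$ and $\PP(F\leq M)\geq 1/4$ so that the isoperimetric inequality applies with this centering) and then passing from median to mean by integrating the exponential tails; if you prefer the mean-centered route, you must make the truncation-level bookkeeping explicit and still add the crude-bound case.
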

\begin{proof}[Proof of Lemma~\ref{lemma:Skxycon}]
Choose $\delta_1=\delta_1(m_1,m_2)$ and $\delta_2=\delta_2(m_1,m_2)$ as follows where $\log 8\leq t\leq m_1$ is to be determined:
	\begin{align*}
	\delta_1(m_1,m_2):&=\delta_1(m_1,m_2,t):= \mathbb{E}\|\widetilde{\bGamma}\| + D_1t^{1/2}(\sigma\mu_1 + \sigma^2 m_2^{1/2})\\
	\delta_2(m_1,m_2):&=\delta_2(m_1,m_2,t):= \mathbb{E}\|\bZ\| + D_2\sigma t^{1/2}
	\end{align*}
	and the constants $D_1,D_2>0$ are chosen such that $\PP\big(\|\hat{\bGamma}\|\geq \delta_1(m_1,m_2,t)\big)\leq e^{-t}$ and $\PP\big(\|\bZ\|\geq \delta_2(m_1,m_2,t)\big)\leq e^{-t}$.
	Let $M:={\rm Med}(\langle\bx,\bS_k(\hat{\bGamma})\by\rangle)$ denote its median. 
	\paragraph*{Case 1.} If
	$
	D_1t^{1/2}(\mu_1\sigma + \sigma^2 m_2^{1/2}) \leq \frac{\gamma}{4}\bar{g}_k(\bA\bA^{\top}).
	$
	Then, $\delta_1\leq (1-\frac{\gamma}{2})\frac{\bar{g}_k(\bA\bA^{\top})}{2}=\frac{1-2\gamma'}{1+2\gamma'}\frac{\bar{g}_k(\bA\bA^{\top})}{2}$ for some $\gamma'\in(0,1/2)$. By Lemma~\ref{lemma:FLip}, $F_{\delta_1,\delta_2,\bx,\by}(\cdot)$ satisfies the Lipschitz condition. By definition of $F_{\delta_1,\delta_2,\bx,\by}(\bZ)$, we have $F_{\delta_1,\delta_2,\bx,\by}(\bZ)=\langle\bx,\bS_k(\hat{\bGamma})\by\rangle$ on the event $\{\|\hat{\bGamma}\| \leq \delta_1, \|\bZ\|\leq \delta_2\}$. By Lemma~\ref{lemma:gammabound} and $t\geq \log8$,
	\begin{align*}
	\mathbb{P}\Big\{F_{\delta_1,\delta_2,\bx,\by}&(\bZ)\geq M \Big\}\\
	\geq& \mathbb{P}\Big\{F_{\delta_1,\delta_2,\bx,\by}(\bZ)\geq M,\quad \|\hat{\bGamma}\| \leq \delta_1,\quad \|\bZ\| \leq \delta_2\Big\} \\
	\geq& \mathbb{P}\Big\{\langle\bx,\bS_k(\hat{\bGamma})\by\rangle \geq M\Big\} - \mathbb{P}\{ \|\hat{\bGamma}\| \leq \delta_1, \|\bZ\| \leq \delta_2\}\\
	 \geq&  \mathbb{P}\Big\{\langle\bx,\bS_k(\hat{\bGamma})\by\rangle \geq M\Big\} - \mathbb{P}\Big\{ \|\hat{\bGamma}\| \leq \delta_1\Big\}- \mathbb{P}\Big\{ \|\bZ\| \leq \delta_2\Big\}\\
	   \geq&  \frac{1}{2}-\frac{1}{8}-\frac{1}{8}= 1/4,
	\end{align*}
	and similarly, 
	$$
	\mathbb{P}\Big\{F_{\delta_1,\delta_2,\bx,\by}(\bZ)\leq M) \Big\} \geq 1/4.
	$$
	It follows from Gaussian isoperimetric inequality (see \cite[Lemma~2.6]{koltchinskii2016perturbation}) and Lemma \ref{lemma:FLip} that with some constant $D_{\gamma}>0$, for all $t\geq \log 8$ with probability at least $1-e^{-t}$, 
	$$
	\big|F_{\delta_1,\delta_2,\bx,\by}(\bZ)- M\big| \leq D_{\gamma}\frac{\sigma\delta_1t^{1/2}}{\bar{g}_k^2(\bA\bA^{\top})}\Big(\mu_1+\delta_2 +\frac{\delta_1}{\delta_2}\Big)\|\bx\|_{\ell_2}\|\by\|_{\ell_2}.
	$$
	Since $t\leq m_1\leq m_2$, it is easy to check by Lemma~\ref{lemma:gammabound} that $\delta_1\asymp \sigma\mu_1 m_1^{1/2}+\sigma^2(m_1m_2)^{1/2}$ and $\delta_2\asymp \sigma m_2^{1/2}$. Moreover, $\PP\big\{\|\hat{\bGamma}\| \leq \delta_1, \|\bZ\|\leq \delta_2\big\}\geq 1-2e^{-t}$. As a result, with probability at least $1-e^{-3t}$,
	\begin{equation}\label{eq:SkGammacase1}
	\big|\langle\bx,\bS_k(\hat{\bGamma})\by\rangle-M\big| \leq D_{\gamma}\frac{\sigma\mu_1t^{1/2}+\sigma^2m_2^{1/2}t^{1/2}}{\bar{g}_k(\bA\bA^{\top})} \bigg(\frac{\delta(m_1,m_2)}{\bar{g}_k(\bA\bA^{\top})}\bigg)\|\bx\|_{\ell_2}\|\by\|_{\ell_2}.
	\end{equation}
	\paragraph*{Case 2.} If
	$
	D_1t^{1/2}(\sigma\mu_1+\sigma^2m_2^{1/2}) > \frac{\gamma}{4}\bar{g}_k(\bA\bA^{\top}).
	$
	It implies that
	$$
	\mathbb{E}\|\hat{\bGamma}\| \leq D_1\frac{(1-\gamma)}{\gamma}t^{1/2}(\sigma\mu_1+\sigma^2m_2^{1/2}),
	$$
	and
	$
	\delta_1 \leq D_{\gamma}t^{1/2}(\sigma\mu_1+\sigma^2m_2^{1/2}).
	$
	By Lemma~\ref{lemma:gammabound} and Lemma~\ref{lemma:hatPk}, with probability at least $1-e^{-t}$,
	\begin{equation*}
		|\langle\bx,\bS_k(\hat{\bGamma})\by\rangle| \leq \|\bS_k(\hat{\bGamma})\| \leq D_{\gamma}t\frac{(\sigma\mu_1+\sigma^2 m_2^{1/2})^2}{\bar{g}_k^2(\bA\bA^{\top})}\|\bx\|_{\ell_2}\|\by\|_{\ell_2},
	\end{equation*}
   which immediately yields that 
   \begin{equation*}
   M\leq D_{\gamma} \frac{(\sigma\mu_1+\sigma^2 m_2^{1/2})^2}{\bar{g}_k^2(\bA\bA^{\top})}\|\bx\|_{\ell_2}\|\by\|_{\ell_2}. 
   \end{equation*}
   The above inequalities imply that with probability at least $1-e^{-t}$ for $\log 8\leq t\leq m_1$, 
   \begin{align}
       	|\langle\bx,\bS_k(\hat{\bGamma})\by\rangle-M|  \leq& D_{\gamma}t\frac{(\sigma\mu_1+\sigma^2m_2^{1/2})^2}{\bar{g}_k^2(\bA\bA^{\top})}\|\bx\|_{\ell_2}\|\by\|_{\ell_2}\nonumber \\
       	 \leq& D_{\gamma} \frac{\sigma\mu_1t^{1/2}+\sigma^2m_2^{1/2}t^{1/2}}{\bar{g}_k(\bA\bA^{\top})} \bigg(\frac{\delta(m_1,m_2)}{\bar{g}_k(\bA\bA^{\top})}\bigg)\|\bx\|_{\ell_2}\|\by\|_{\ell_2}. \label{eq:SkGammacase2}
   \end{align}
   Therefore, bounds (\ref{eq:SkGammacase1}) and (\ref{eq:SkGammacase2}) hold in both cases. The rest of the proof is quite standard by integrating the exponential tails and will be skipped here, see \cite{koltchinskii2016perturbation}.
	\end{proof}
	\begin{proof}[Proof of Theorem~\ref{thm:xPky}]
	By Lemma~\ref{lemma:Lkcon} and Lemma~\ref{lemma:Skxycon}, if $D_1\delta(m_1,m_2)\leq \bar{g}_k(\bA\bA^{\top})$ for a large enough constant $D_1>0$ such that $\gamma\leq 1/2$, we conclude that for all $\log 8\leq t\leq m_1$, with probability at least $1-2e^{-t}$,
	$$
	\big|\big<\bx, \hat{\bP}_k\by\big>\big|\leq D t^{1/2}\frac{\sigma\mu_1+\sigma^2m_2^{1/2}}{\bar{g}_k(\bA\bA^{\top})}\|\bx\|_{\ell_2}\|\by\|_{\ell_2}
	$$
	which concludes the proof after adjusting the constant $D$ accordingly.
	\end{proof}

	\section{Proof of Lemma~\ref{lemma:hatS-tildeS}}
	Observe that for any $\bx,\by\in\RR^{m_1}$ with $\|\bx\|_{\ell_2}=\|\by\|_{\ell_2}=1$ and  $\delta_t=\mathbb{E}\|\hat{\bGamma}\|+D_1\sigma\mu_1t^{1/2}+D_2\sigma^2m_2^{1/2}t^{1/2}$ with $t\leq m_1$ and some $\gamma\in (0,1/2]$,
\begin{align*}
\Big|\mathbb{E}\big\langle\bx,\big(\bS_k(\widetilde{\bGamma})-\bS_k(\hat{\bGamma})\big)\by\big\rangle\Big|
&\leq \mathbb{E}\Big\|\bS_k(\widetilde{\bGamma})-\bS_k(\hat{\bGamma})\Big\|\\
&= \mathbb{E}\Big\|\bS_k(\widetilde{\bGamma})-\bS_k(\hat{\bGamma})\Big\|{\bf 1}\Big(\|\widetilde{\bGamma}\|\leq (1+\gamma)\delta_t\Big){\bf 1}\Big(\|\hat{\bGamma}\|\leq (1+\gamma)\delta_t\Big)\\
&+ \mathbb{E}\Big\|\bS_k(\widetilde{\bGamma})-\bS_k(\hat{\bGamma})\Big\|{\bf 1}\Big(\|\widetilde{\bGamma}\|\leq (1+\gamma)\delta_t\Big){\bf 1}\Big(\|\hat{\bGamma}\|> (1+\gamma)\delta_t\Big)\\
&+ \mathbb{E}\Big\|\bS_k(\widetilde{\bGamma})-\bS_k(\hat{\bGamma})\Big\|{\bf 1}\Big(\|\widetilde{\bGamma}\|>(1+\gamma)\delta_t\Big){\bf 1}\Big(\|\hat{\bGamma}\|\leq (1+\gamma)\delta_t\Big)\\
&+ \mathbb{E}\Big\|\bS_k(\widetilde{\bGamma})-\bS_k(\hat{\bGamma})\Big\|{\bf 1}\Big(\|\widetilde{\bGamma}\|> (1+\gamma)\delta_t\Big){\bf 1}\Big(\|\hat{\bGamma}\|>(1+\gamma)\delta_t\Big)
\end{align*}
where the constants $D_1,D_2>0$ are chosen such that $\max\big\{\PP\big(\|\widetilde\bGamma\|\geq \delta_t\big), \PP\big(\|\hat\bGamma\|\geq \delta_t\big)\big\}\leq e^{-t}$.
By Lemma~\ref{lemma:Skcon},
\begin{align*}
\mathbb{E}\Big\|\bS_k(\widetilde{\bGamma})-&\bS_k(\hat{\bGamma})\Big\|{\bf 1}\Big(\|\widetilde{\bGamma}\|\leq (1+\gamma)\delta_t\Big){\bf 1}\Big(\|\hat{\bGamma}\|\leq (1+\gamma)\delta_t\Big)\\
\leq &D_{\gamma}\frac{\delta_t}{\bar{g}_k^2(\bA\bA^{\top})}\mathbb{E}\|\widetilde{\bGamma}-\hat{\bGamma}\|\leq D_{\gamma}\frac{\delta_t}{\bar{g}_k^2(\bA\bA^{\top})}\mathbb{E}\|\bZ\bP_k^{hh}\bZ^{\top}-\nu_k\sigma^2\bI_{m_1}\|.
\end{align*}
By writing $\bP_k^{hh}:=\sum_{j\in \Delta_k} \bh_{j}\otimes \bh_{j}$, we obtain
\begin{align*}
\bZ\bP_k^{hh}\bZ^{\top}-\sigma^2\nu_k\bI_{m_1}=&\sum_{j\in\Delta_k}(\bZ\bh_{j})\otimes (\bZ\bh_{j})-\sigma^2\nu_k\bI_{m_1}\\
=&\nu_k\Big(\frac{1}{\nu_k}\sum_{j\in\Delta_k}(\bZ\bh_{j})\otimes (\bZ\bh_{j})-\sigma^2\bI_{m_1}\Big).
\end{align*}
where $\nu_k={\rm Card}(\Delta_k)$.
The vectors $\bZ\bh_{j}\sim \calN(0, \sigma^2\bI_{m_1})$ and $\{\bZ\bh_{j}: \ldots, j\in\Delta_k\}$ are independent. By \cite{koltchinskii2017concentration},
$$
\mathbb{E}\Big\|\frac{1}{\nu_k}\sum_{j\in\Delta_k}(\bZ\bh_j)\otimes (\bZ\bh_j)-\sigma^2\bI_{m_1}\Big\|\lesssim \sigma^2\Big(\sqrt{\frac{m_1}{\nu_k}}\vee \frac{m_1}{\nu_k}\Big).
$$
Since $\nu_k\leq m_1$, we conclude with
\begin{align}\label{eq:hatS-tildeS-eq1}
\mathbb{E}\Big\|\bS_k(\widetilde{\bGamma})-\bS_k(\hat{\bGamma})\Big\|{\bf 1}\Big(\|\widetilde{\bGamma}\|\leq (1+\gamma)\delta_t\Big){\bf 1}&\Big(\|\hat{\bGamma}\|\leq (1+\gamma)\delta_t\Big)\\
\lesssim_{\gamma}& \frac{\delta_t}{\bar{g}_k(\bA\bA^{\top})}\bigg(\frac{m_1\sigma^2}{\bar{g}_k(\bA\bA^{\top})}\bigg).\nonumber
\end{align}
Choose $t=m_1$, by Lemma~\ref{lemma:hatPk} and Lemma~\ref{lemma:gammabound},
\begin{align*}
\mathbb{E}\Big\|\bS_k(\widetilde{\bGamma})-&\bS_k(\hat{\bGamma})\Big\|{\bf 1}\Big(\|\hat{\bGamma}\|\leq (1+\gamma)\delta_{m_1}\Big){\bf 1}\Big(\|\widetilde{\bGamma}\|>(1+\gamma)\delta_{m_1}\Big)\\
&\leq D_{\gamma} \frac{\delta_{m_1}^2}{\bar{g}_k^2(\bA\bA^{\top})}\EE \frac{\|\widetilde{\bGamma}\|^2}{\bar{g}_k^2(\bA\bA^{\top})}{\bf 1}\Big(\|\widetilde{\bGamma}\|>(1+\gamma)\delta_{m_1}\Big)\\
&\lesssim_{\gamma} \frac{\delta_{m_1}^2}{\bar{g}_k^4(\bA\bA^{\top})}e^{-m_1/2}\EE^{1/2}\|\widetilde{\bGamma}\|^4\lesssim  \frac{\delta_{m_1}^4}{\bar{g}_k^4(\bA\bA^{\top})}e^{-m_1/2}\\
&\lesssim  \frac{\delta(m_1,m_2)}{\bar{g}_k(\bA\bA^{\top})}\bigg(\frac{\sigma\mu_1+\sigma^2m_1}{\bar{g}_k(\bA\bA^{\top})}\bigg)
\end{align*}
which is clearly dominated by (\ref{eq:hatS-tildeS-eq1}) for $t=m_1$ and $m_2e^{-m_1/2}\leq 1$. The other terms are bounded in a similar fashion. 
To sum up, we obtain
 $$
 \|\mathbb{E}\bS_k(\widetilde{\bGamma})-\EE\bS_k(\hat{\bGamma})\|\lesssim \frac{\sigma\mu_1+\sigma^2m_1}{\bar{g}_k(\bA\bA^{\top})}\bigg( \frac{\delta(m_1,m_2)}{\bar{g}_k(\bA\bA^{\top})}\bigg).
 $$

\end{document}